\documentclass[a4paper,oneside,10pt]{article}
\usepackage{amsfonts}
\usepackage{amsmath}
\usepackage{amssymb}
\usepackage{graphicx}
\usepackage{color}
\usepackage{amsthm}
\usepackage{cite}
\usepackage{enumitem}
\usepackage{hyperref}
\usepackage{bbm}
\usepackage[utf8]{inputenc}
\usepackage{array}
\usepackage{empheq}

\newtheoremstyle{uprightstyle}
{3pt} 
{3pt} 
{\upshape} 
{} 
{\bfseries} 
{.} 
{ } 
{} 
\theoremstyle{uprightstyle} 
\newtheorem{theorem}{Theorem}[section]

\newtheorem{definition}[theorem]{Definition}

\newtheorem{lemma}[theorem]{Lemma}

\newtheorem{problem}[theorem]{Problem}
\newtheorem{proposition}[theorem]{Proposition}
\newtheorem{remark}[theorem]{Remark}

\newenvironment{proof}[1][Proof]{\noindent \textbf{#1.} }{\ \ $\Box$}
\numberwithin{equation}{section} 
 
\begin{document}

\title{Optimal control problems of Stochastic Volterra integral equations
under Volatility Ambiguity}
\author{Bingru Zhao \thanks{
Zhongtai Securities Institute for Financial Studies, Shandong University,
Jinan, Shandong 250100, PR China. bingruzhao@mail.sdu.edu.cn.} \and %
Mingshang Hu \thanks{%
Zhongtai Securities Institute for Financial Studies, Shandong University,
Jinan, Shandong 250100, PR China. humingshang@sdu.edu.cn. Research supported
by the National Natural Science Foundation of China (No. 12326603,
11671231). }}

\makeatletter\renewcommand{\@date}{} \makeatother
\maketitle

\textbf{Abstract}. In this
paper, we study the optimal control problems for stochastic Volterra
integral equations driven by $G $-Brownian motion under Volatility Ambiguity. With the help of $G$-stochastic analysis techniques and the
weak convergence methods, we obtain the variation of the cost functional and
the variational inequality. Under the convexity assumptions, we establish
the stochastic maximum principle, which serves as both a necessary and
sufficient condition for optimal control. 

{\textbf{Key words}. }$G$-expectation, $G$-Brownian motion, stochastic
control, Pontryagin stochastic maximum principle, Stochastic Volterra
integral equations

\textbf{AMS subject classifications.} 60H10; 60H20;93E20

\addcontentsline{toc}{section}{\hspace*{1.8em}Abstract}

\section{Introduction}

\noindent

Let $\left( \Omega ,\mathcal{F},\mathbb{F},\mathbb{P}\right) $ be a complete
probability space and $B$ be a standard Brownian motion, where $\mathbb{F=}$ 
$\mathbb{F}^{B}.$ In many economic and financial models, noise often
exhibits long-range dependence, and state equations possess path-dependent
properties, which make classical Markovian stochastic differential equations
unsuitable. As an important tool for studying these nonlocal structures,
stochastic Volterra integral equations naturally provide a characterization
of these memory effects, which takes the form 
\begin{equation}
	X(t)=\phi (t)+\int_{0}^{t}b(t,s,X(s))ds+\int_{0}^{t}\sigma
	(t,s,X(s))dB_{s},\quad t\in \lbrack 0,T],  \label{introduction1}
\end{equation}%
There are many applications of stochastic Volterra integral equations (SVIEs) in different research areas related to memory effects, which have been extensively studied in recent decades. The
well-posedness of SVIE has been studied in \cite%
{berger1980volterra,ito1979on,pardoux1990stochastic}%
.  Furthermore, in order to study stochastic optimal control problems associated with SVIEs,
Yong \cite{yong2006Wellposedness,yong2008Wellposedness} introduced a
classical backward stochastic Volterra integral equation (BSVIE, for short)
of the following form:%
\begin{equation}
	Y(t)=\psi
	(t)+\int_{t}^{T}g(t,s,Y(s),Z(t,s),Z(s,t))ds-\int_{t}^{T}Z(t,s)dB_{s},
	\label{BSVIE}
\end{equation}%
where $\psi :\left[ 0,T\right] \times \Omega \rightarrow \mathbb{R}$ and $%
g:\Delta \left[ 0,T\right] \times \Omega \times \mathbb{R\times R\rightarrow
	R}$ are measurable mappings.  Moreover, Lin \cite{Lin2002adapted} studied a special
class of Eq.(\ref{BSVIE}) in which $g(\cdot )$ does not depend on $Z(s,t)$
and $\psi (t)\equiv \xi $. The relevant
studies for BSVIE (\ref{BSVIE}) can be founded in \cite%
{Wangzhang2007,HuY2019linear,Ren2010jump,WangH2021risk,Fan_wang2025}
As the adjoint equation of linear SVIEs, BSVIEs provide a powerful tool for studying optimal control problems. Subsequently, the optimal control problems
for SVIEs have attracted significant attention and have been widely applied. For more related works, see \cite%
{WangT2013mean,Hamaguchi_delay,WangT2017forward-backward,Wen_shi_double}) and so on for details.

Motivated by the financial problems with volatility uncertainty, Peng \cite%
{peng2019nonlinear} systematically proposed a time-consistent nonlinear
expectation, called $G$-expectation. Based on these works, $G$-expectation
theory and its applications have been widely developed. Denis et al. \cite%
{Denis2011function} obtained the representation theorem for $G$-expectation $%
\mathbb{\hat{E}}$. The theories of stochastic differential equation by $G$%
-Brownian motion ($G$-SDEs) and backward stochastic differential equation by 
$G$-Brownian ($G$-BSDEs) were established in \cite%
{peng2019nonlinear,Hu2014a}. In addition, by a different method,
Soner et al. \cite{Soner2012Bsde2} obtained a deep result of the existence
and uniqueness theorem for a new type of fully nonlinear BSDE, called 2BSDE.
For other related works, the readers can refer to \cite%
{LiuG2020multi,Jiangl2023stable,Hu2016Qc,Linyq2019reflect,luo2014stochastic,gao2009pathwise,Qutime_vary}%
. Meanwhile, $G$-expectation theory provides a new research framework for
stochastic control problems with volatility uncertainty, and this has
attracted the attention of many researchers. For example, Xu \cite{xu2014G_control}
obtained a necessary condition for optimal control problems of $G$-SDEs and
Biagini et al. \cite{Bagiani2014_Gcontrol} studied strongly robust optimal control problems under
volatility uncertainty. In addition, Hu et al. \cite{Hu2016maximum} obtained
the stochastic maximum principle for stochastic recursive optimal control.
For more developments, the readers can refer to \cite{Buckdan2025meanG,Sun_G_control}.

In this paper, we study the optimal control problem of stochastic Volterra
integral equations under volatility ambiguity, which is driven by $d$%
-dimensional $G$-Brownian motion. More precisely, we consider the following
state equation: for each $t\in[0,T]$, 
\begin{equation}
	\displaystyle X(t)=\mathbb{\phi }(t)+\int_{0}^{t}b(t,s,X(s),u(s))ds+\overset{
		d}{\underset{i,j=1}{\sum }}\int_{0}^{t}h^{ij}(t,s,X(s),u(s))d\left\langle
	B^{i},B^{j}\right\rangle _{s}+\overset{d}{\underset{i=1}{\sum }}
	\int_{0}^{t}\sigma ^{i}(t,s,X(s),u(s))dB_{s}^{i}\text{ }
	\label{introductuion_G_SVIE}
\end{equation}
with the cost functional 
\begin{equation}
	\begin{array}{c}
		\displaystyle J\left( u(\cdot )\right) =\mathbb{\hat{E}}\left[ \varphi
		(X(T))+\int_{0}^{T}g(s,X(s),u(s))ds\right] ,%
	\end{array}%
\end{equation}
where the control domain is convex and $b,h,\sigma,\varphi,g$ are suitable
maps. The objective is to minimize $J(u)$ over the set of admissible control 
$\mathcal{U}[0,T]$.

Observe that the coefficients of $G$-SVIE depend on both $t$ and $s$, which
makes it$\mathrm{\hat{o}}$ formula not applicable. Moreover, there are
additional difficulities in the research of $G$-expectation due to the
uncertainty. On the one hand, the general dominated convergence theorem does
not hold in general, and stochastic processes in $G$-expectation spaces
admit quasi-surely continuous version. This results in difficulties in
deriving convergence estimates as well as in establishing the well-posedness
of Eq.(\ref{introductuion_G_SVIE}). In order to overcome this, by using some 
$G$-stochastic analysis techniques and imposing assumptions (H3), (H5) and
(H6), we obtain the variational equation and the first-order variational
expansion of the cost functional. On the other hand, similar to the
classical probabilistic framework, the adjoint equation of a linear $G$-SDE
is a linear $G$-BSDE. However, note that the $G$-BSDE contains a
non-symmetric $G$-martingale term $K$, which is not differentiable. This
leads to the fact that the necessary conditions derived in the $G$%
-expectation framework depend on the variational equation, which is not
consistent with the stochastic maximum principle. To solve this, by
combining the weak compactness of probability measures (which is the set
that represents $\mathbb{\hat{E}}$) with minimax theorem, we establish the
derivative of the cost functional and the variational inequality under a
reference probability $\tilde{P}$. Furthermore, under each probability
measure $P$, the $G$-Brownian motion reduces to a continuous martingale.
Therefore, it is natural to consider the BSVIE in a general filtration,
which serves as the adjoint equation. Meanwhile, we establish a duality
principle between linear BSVIE in a general filtration and linear $G$-SDE.
Based on these results, we derive the stochastic maximum principle. Finally,
motivated by the derivation of the duality principle, we establish a link
between the Hamiltonian function and the state equation. Using the
representation theorem for $G$-expectation, we show that the derived
stochastic maximum principle is also sufficient under some convex
assumptions.

The paper is organized as follows. In Section 2, we present some
preliminaries related to the $G$-expectation framework. Moreover, some
fundamental results on the $d$-dimensional stochastic Volterra integral
equations driven by $G$-Brownian motion are provided. In Section 3, we
present the formulation of optimal control problems for stochastic Volterra
integral equations under volatility uncertainty. The stochastic maximum
principle is established in Section 4, while sufficient condition for
optimality is derived in Section 5.

\section{Preliminaries}

\noindent

In this section, we recall some basic notions in the framework of $G$%
-expectation. For more developments, the readers can refer to \cite%
{peng2019nonlinear}.

\subsection{\texorpdfstring{$G$-expectation}{G-expectation}}

\noindent

Denote by $\Omega _{T}=C\left( \left[ 0,T\right] ,\mathbb{R}^{d}\right) $
the space of all $\mathbb{R}^{d}$-valued continuous functions on $\left[ 0,T%
\right] $ with $w_{0}=0.$ The canonical process $B$ is defined by $%
B_{t}\left( w\right) =w_{t}$ for each $w\in \Omega _{T}$ and $t\in \left[ 0,T%
\right] $. For each $0\leq t\leq T,$ we set%
\begin{equation*}
Lip\left( \Omega _{t}\right) :=\left\{ \varphi \left(
B_{t_{1}},B_{t_{2}},...,B_{t_{N}}\right) :\varphi \in C_{b.Lip}\left( 
\mathbb{R}^{d\times N}\right) ,t_{1}<\cdot \cdot \cdot <t_{N}\leq t,N\in 
\mathbb{N}\right\} ,
\end{equation*}%
where $C_{b.Lip}\left( \mathbb{R}^{d\times N}\right) $ denotes the space of
all $\mathbb{R}^{d\times N}$-valued bounded Lipschitz functions. Moreover,
we also set 
\begin{equation}
Lip\left( \Omega \right) =\overset{\infty }{\underset{n=1}{\cup }}Lip\left(
\Omega _{n}\right),
\end{equation}

Given a monotonic sublinear function $G:\mathbb{S}_{d}\rightarrow \mathbb{R}$%
, then there exists a bounded, convex and closed subset $\Sigma \subset 
\mathbb{S}_{d}$ such that%
\begin{equation}
G\left( A\right) :=\frac{1}{2}\underset{\gamma \in \Sigma }{\sup }\mathrm{tr}%
\left[ \gamma \gamma ^{T}A\right] ,\text{ }A\in \mathbb{S}_{d},
\label{G_def}
\end{equation}%
where $\mathbb{S}_{d}$ is the set of all $d\times d$ symmetric matrices$.$
It follows that 
\begin{equation}
\underline{\sigma }^{2}I_{d\times d}ds\leq \left[ d\left\langle
B^{i},B^{j}\right\rangle _{s}\right] _{i.j=1}^{d}\leq \bar{\sigma}%
^{2}I_{d\times d}ds.
\end{equation}%
In this paper, we assume that $G$ is non-degenerate, i.e., there is a
constant $\underline{\sigma }^{2}>0$ such that%
\begin{equation}
G\left( A\right) -G\left( B\right) \geq \underline{\sigma }^{2}\mathrm{tr}%
\left[ A-B\right] ,\text{ for each }A\geq B.
\end{equation}

By using a nonlinear parabolic PDE, Peng \cite%
{peng2019nonlinear} construct a consistent $G$-expectation
space $(\Omega _{T},Lip\left( \Omega _{T}\right) $, $\mathbb{\hat{E}}).$ The
canonical process $B$ under $\mathbb{\hat{E}}$ is called the $G$-Brownian
motion. For each $p\geq 1,$ let $L_{G}^{p}\left( \Omega _{T}\right) $ be the
completion of $Lip\left( \Omega _{T}\right) $ under the norm $\left\Vert
\eta \right\Vert _{L_{G}^{p}}=\mathbb{\hat{E}}\left[ \left\vert \eta
\right\vert ^{p}\right] ^{1/p}.$ Furthermore, the $G$-expectation $\mathbb{%
\hat{E}}$ can be extended continuously to $L_{G}^{1}\left( \Omega
_{T}\right) $ under the norm $\left\Vert \cdot \right\Vert _{L_{G}^{1}}$ and 
$(\Omega _{T},Lip\left( \Omega _{T}\right) $, $\mathbb{\hat{E}})$ is called
the $G$-expectation space.

We define%
\begin{equation}
\begin{array}{c}
\mathcal{P=\{}P\text{ }|\text{ }P\text{ is the probability measure on }%
\left( \Omega _{T},\mathcal{B}\left( \Omega _{T}\right) \right) ,\text{ }%
E_{P}[X]\leq \mathbb{\hat{E}}\left[ X\right] ,\text{ for each }X\in
L_{G}^{1}\left( \Omega _{T}\right) .\mathcal{\}},%
\end{array}
\label{hua_P}
\end{equation}%
The $G$-expectation has the following representation theorem.

\begin{theorem}[\negthinspace \negthinspace \protect\cite{Denis2011function}]

\label{G_Def}There exists a unique weakly compact convex set of probability
measures $\mathcal{P}$ (see \ref{hua_P}) such that 
\begin{equation*}
\mathbb{\hat{E}}\left[ X\right] =\underset{P\in \mathcal{P}}{\sup }E_{P}%
\left[ X\right] ,\text{ for each }X\in L_{G}^{1}\left( \Omega _{T}\right) ,
\end{equation*}%
where $\mathcal{P}$ is defined as in (\ref{hua_P}) and $\mathcal{B}\left(
\Omega _{T}\right) =\sigma \left( B_{s}:s\leq T\right) $. $\mathcal{P}$ is
called a set that represents $\mathbb{\hat{E}}$.

\begin{proposition}
\label{weekconverge}[\negthinspace \negthinspace \protect\cite{Denis2011function}] Suppose that the set of probability measures $%
\left\{ P_{n}:n\geq 1\right\} \subset \mathcal{P}$ converges weekly to $P.$
Then for each $X\in L_{G}^{1}\left( \Omega _{T}\right) ,$ we have 
\begin{equation*}
E_{P_{n}}[X]\rightarrow E_{P}[X].
\end{equation*}
\end{proposition}
\end{theorem}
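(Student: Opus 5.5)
The plan is to prove the convergence first for cylinder functions in $Lip(\Omega_T)$, and then extend it to all of $L_G^1(\Omega_T)$ by an $\varepsilon/3$ approximation. The tool that makes the extension work is the uniform domination $E_Q[|Y|]\le \mathbb{\hat{E}}[|Y|]$, valid for every $Q\in\mathcal{P}$. It also holds for the limit $P$, because $\mathcal{P}$ is weakly compact by Theorem \ref{G_Def}, hence weakly closed, so $P\in\mathcal{P}$.

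\textbf{Step 1 (cylinder functions).} Take $X=\varphi(B_{t_1},\dots,B_{t_N})\in Lip(\Omega_T)$ with $\varphi\in C_{b.Lip}$. The coordinate maps $w\mapsto w_{t_k}$ are continuous on $\Omega_T$ with the sup-norm topology. Hence $X$ is a bounded continuous function on $\Omega_T$, and $E_{P_n}[X]\to E_P[X]$ follows directly from the definition of weak convergence.

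\textbf{Step 2 (density).} Fix $X\in L_G^1(\Omega_T)$ and $\varepsilon>0$. By the definition of $L_G^1(\Omega_T)$ as a completion, there is $Y\in Lip(\Omega_T)$ with $\mathbb{\hat{E}}[|X-Y|]<\varepsilon$. A small point needs checking: $E_Q[X]$ must be well defined for $X$ in the completion, i.e. $X$ must have a Borel version that is $Q$-integrable for every $Q\in\mathcal{P}$. This is part of the representation in Theorem \ref{G_Def}, since $E_Q[X]\le\mathbb{\hat{E}}[X]$ is asserted for all $X\in L_G^1$. I will use it as given, so $E_Q[|X-Y|]\le\mathbb{\hat{E}}[|X-Y|]<\varepsilon$ for all $Q\in\mathcal{P}$, including $Q=P$. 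Then
\[
|E_{P_n}[X]-E_P[X]|\le E_{P_n}[|X-Y|]+|E_{P_n}[Y]-E_P[Y]|+E_P[|Y-X|]\le 2\varepsilon+|E_{P_n}[Y]-E_P[Y]|.
\]
By Step 1 the middle term tends to $0$, so $\limsup_n|E_{P_n}[X]-E_P[X]|\le 2\varepsilon$. Since $\varepsilon$ is arbitrary, this gives the claim.

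\textbf{Main obstacle.} The delicate point is justifying that $P\in\mathcal{P}$, so that the bound $E_P[|X-Y|]\le\mathbb{\hat{E}}[|X-Y|]$ is available for the limit measure. I would argue as follows. For every $Y\in Lip(\Omega_T)$, Step 1 gives $E_P[Y]=\lim_n E_{P_n}[Y]\le\mathbb{\hat{E}}[Y]$. Applying this to $|X_m-X_k|$ for a Cauchy sequence $X_m\in Lip(\Omega_T)$ approximating $X$ in $L_G^1$, we get $E_P[|X_m-X_k|]\le\mathbb{\hat{E}}[|X_m-X_k|]\to0$, so $(X_m)$ is Cauchy in $L^1(P)$. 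Hence $E_P$ extends continuously to $L_G^1$, with $E_P[X]\le\mathbb{\hat{E}}[X]$ preserved in the limit. The remaining subtlety is that this $L^1(P)$-limit agrees $P$-a.s. with the quasi-sure version of $X$ used to define $E_P[X]$. This is standard in the framework of \cite{Denis2011function} and I will take it as given. Alternatively, one can simply invoke the weak closedness of $\mathcal{P}$ from Theorem \ref{G_Def}.
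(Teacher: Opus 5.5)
The paper gives no proof of this statement; both the representation theorem and the nested Proposition are quoted from Denis--Hu--Peng \cite{Denis2011function}. Your argument for the Proposition is correct and is essentially the one in that reference. Cylinder functions in $Lip(\Omega_T)$ are bounded and continuous on $\Omega_T$. The $\varepsilon/3$ step then works because $E_Q[|X-Y|]\le\hat{\mathbb{E}}[|X-Y|]$ holds uniformly over $Q\in\mathcal{P}$.

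Your derivation of $P\in\mathcal{P}$ from weak compactness is also correct: a compact subset of the Hausdorff space of probability measures with the weak topology is closed. It is worth making this explicit. The paper's formulation does not assume $P\in\mathcal{P}$, yet the paper later reads it off as part of the conclusion, when showing that $\tilde{\mathcal{P}}(X,u)$ is weakly closed. Your fallback route via Cauchy sequences in $L^1(P)$ is unnecessary once you have this. As you suspect, making it rigorous would require identifying the $L^1(P)$-limit with a quasi-continuous version of $X$, which is the capacity machinery of that reference.

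The one thing to be clear about is scope. The statement also asserts the representation theorem itself, and you use it rather than prove it. You rely both on its weak compactness and on the fact that every $Q\in\mathcal{P}$ integrates a Borel version of each $X\in L_G^1(\Omega_T)$ with $E_Q[X]\le\hat{\mathbb{E}}[X]$. This matches what the paper does, since it only cites the result, but your text establishes only the Proposition. A proof of the theorem would need the following ingredients:
\begin{enumerate}
\item A representation of $\hat{\mathbb{E}}$ on $Lip(\Omega_T)$ as a supremum of linear expectations, either by Hahn--Banach or via the explicit family of laws of $\int_0^{\cdot}\theta_s\,dW_s$ with $\theta$ progressive and $\Sigma$-valued.
\item $\sigma$-additivity and tightness of the resulting measures. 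These follow from $\hat{\mathbb{E}}[|B_t-B_s|^4]\le C|t-s|^2$ together with Kolmogorov's criterion and Prokhorov's theorem, which also yields weak compactness.
\item The extension from $Lip(\Omega_T)$ to $L_G^1(\Omega_T)$, which is where quasi-continuous versions enter.
\item Uniqueness, by strictly separating a measure from a weakly compact convex set with some $f\in C_b(\Omega_T)\subset L_G^1(\Omega_T)$.
\end{enumerate}
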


Then we define capacity 
\begin{equation*}
c\left( A\right) =\underset{P\in \mathcal{P}}{\sup }P\left( A\right) ,\text{ 
}A\in \mathcal{B}\left( \Omega_T \right) .
\end{equation*}%
A set $A\in \mathcal{B}\left( \Omega _{T}\right) $ is polar if $c\left(
A\right) =0$. A property holds \textquotedblleft
quasi-surely\textquotedblright\ (q.s.) if it holds except for a polar set.

\begin{definition}
Let $\pi _{T}=\left\{ 0=u_{0}<u_{1}<\cdot \cdot \cdot <u_{N}=T\right\} $ be
the partition of $\left[ 0,T\right] $ for each $N\in \mathbb{N}$. Denote by $%
M_{G}^{0}(0,T;\mathbb{R}^{n})$ the set of processes on $\left[ 0,T\right] $
in the following form:%
\begin{equation*}
\eta _{t}\left( w\right) =\overset{N-1}{\underset{j=0}{\sum }}\xi _{j}\left(
w\right) I_{\left[ u_{j},u_{j+1}\right) }\left( t\right) ,\text{ }\xi
_{j}\in Lip\left( \Omega _{u_{j}}\right) .
\end{equation*}%
For each $p\geq 1,$ denote by $M_{G}^{p}(0,T;\mathbb{R}^{n})$ the completion
of $M_{G}^{0}(0,T;\mathbb{R}^{n})$ with respect to the norm $\left\Vert \eta
\right\Vert _{M_{G}^{p}}=\left( \mathbb{\hat{E}}\left[ \int_{0}^{T}\left%
\vert \eta _{t}\right\vert ^{p}dt\right] \right) ^{1/p}$.
\end{definition}

With the relevant spaces defined above, the stochastic integrals $%
\int_{0}^{\cdot }\eta \left( s\right) dB_{s}$ and $\int_{0}^{\cdot }\xi
\left( s\right) d\left\langle B\right\rangle _{s}$ are well defined for each 
$\eta \in M_{G}^{2}(0,T)$ and $\xi \in M_{G}^{1}(0,T).$

\begin{theorem}
For each $\xi \in H_{G}^{\alpha }(0,T)$ with $\alpha \geq 1$ and $p>0,$ we
can obtain that there exists constants $0<c_{p}<C_{p}<\infty $ such that%
\begin{equation}
\underline{\sigma }^{p}c_{p}\mathbb{\hat{E}}\left[ \left(
\int_{0}^{T}\left\vert \xi \left( s\right) \right\vert ^{2}ds\right) ^{p/2}%
\right] \leq \mathbb{\hat{E}}\left[ \underset{t\in \lbrack 0,T]}{\sup }%
\left\vert \int_{0}^{T}\xi \left( s\right) dB_{s}\right\vert ^{p}\right]
\leq \overline{\sigma }^{2}C_{p}\mathbb{\hat{E}}\left[ \left(
\int_{0}^{T}\left\vert \xi \left( s\right) \right\vert ^{2}ds\right) ^{p/2}%
\right] .  \label{BDG}
\end{equation}
\end{theorem}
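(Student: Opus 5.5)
The plan is to transfer the classical Burkholder--Davis--Gundy inequality to each measure in the representing family $\mathcal{P}$ of Theorem \ref{G_Def}, and then take the supremum over $\mathcal{P}$. The intended reading of the statement is with $\int_0^t$ inside the supremum; the constants are $\underline{\sigma}^p c_p$ and $\bar\sigma^p C_p$.

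\textbf{Step 1: reduction to simple integrands.} I would first treat $\xi\in M_G^0(0,T)$. Both sides of (\ref{BDG}) are continuous in $\xi$ for the $H_G^\alpha$-norm, using the norm-continuity of the $G$-Itô integral. So the inequality extends to all $\xi\in H_G^{\alpha}(0,T)$ by density once it holds for simple processes; for $p<\alpha$ this passage needs a little more care.

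\textbf{Step 2: BDG under each $P\in\mathcal{P}$.} Fix $P\in\mathcal{P}$. The canonical process $B$ is a continuous $P$-martingale, and its quadratic variation coincides $P$-a.s. with the $G$-quadratic variation $\langle B\rangle$. Moreover $\underline{\sigma}^2 I\,ds\le d\langle B\rangle_s\le \bar\sigma^2 I\,ds$ holds quasi-surely, hence $P$-a.s. For simple $\xi$, the $G$-Itô integral agrees $P$-a.s. with the classical Itô integral $M_t=\int_0^t\xi\,dB$ under $P$, and
\[
\underline{\sigma}^2\int_0^t|\xi_s|^2ds\le\langle M\rangle_t\le\bar\sigma^2\int_0^t|\xi_s|^2ds .
\]
The classical BDG inequality for continuous martingales, valid for all $p>0$, gives
\[
c_pE_P[\langle M\rangle_T^{p/2}]\le E_P[\sup_{t}|M_t|^p]\le C_pE_P[\langle M\rangle_T^{p/2}].
\]
Inserting the two-sided bound on $\langle M\rangle$ yields
\[
\underline{\sigma}^pc_pE_P\Big[\Big(\int_0^T|\xi_s|^2ds\Big)^{p/2}\Big]\le E_P\Big[\sup_t|M_t|^p\Big]\le\bar\sigma^pC_pE_P\Big[\Big(\int_0^T|\xi_s|^2ds\Big)^{p/2}\Big].
\]

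\textbf{Step 3: supremum over $\mathcal{P}$.} By Theorem \ref{G_Def}, $\hat{\mathbb{E}}[X]=\sup_{P\in\mathcal{P}}E_P[X]$ for the random variables involved. Taking the supremum of each side of the per-measure inequality over $P$ gives (\ref{BDG}), since the constants do not depend on $P$.

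\textbf{Main obstacle.} The hardest step is justifying that the representation formula applies to $\sup_t|\int_0^t\xi\,dB|^p$ and $(\int_0^T|\xi|^2ds)^{p/2}$. These random variables must belong to $L_G^1(\Omega_T)$, or at least the supremum formula must extend to them. For simple $\xi$ I would argue that they are continuous functionals of the path, obtained by truncation and approximation through elements of $Lip(\Omega_T)$, with the integrability supplied by $\xi\in H_G^\alpha$. Alternatively, one can use the upper expectation $\sup_P E_P$ directly, since it coincides with $\hat{\mathbb{E}}$ on $L_G^1(\Omega_T)$.
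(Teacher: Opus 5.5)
Your route (classical BDG under each $P\in\mathcal{P}$, using that $B$ is a continuous $P$-martingale with $\underline{\sigma}^2 I\,ds\le d\langle B\rangle_s\le\bar\sigma^2 I\,ds$ $P$-a.s., then taking $\sup_{P}$) is the standard proof. The paper gives no proof of its own. It recalls the result from \cite{Hu2014a}, where it is stated with $\int_0^t$ inside the supremum, with the constant $\bar\sigma^pC_p$, and, importantly, only for $p\in(0,\alpha]$. You were right to correct the first two typos. The third point, however, exposes a real problem in your Step 1.

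Your claim that both sides of (\ref{BDG}) are continuous in $\xi$ for the $H_G^\alpha$-norm holds only for $p\le\alpha$, and your caveat points the wrong way.

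\emph{For $p\le\alpha$} the passage is routine. Convergence in $L_G^\alpha$ implies convergence in $L_G^p$. Moreover, $\sup_t|\int_0^t(\xi_n-\xi)\,dB|\to0$ in $L_G^\alpha$ follows from your simple-process upper bound at exponent $\alpha$ applied to differences.

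\emph{For $p>\alpha$} the passage fails outright, for three reasons:
\begin{itemize}
\item for $\xi\in H_G^\alpha$ the quantity $\hat{\mathbb{E}}[(\int_0^T|\xi_s|^2ds)^{p/2}]$ can be $+\infty$;
\item $(\int_0^T|\xi_s|^2ds)^{p/2}$ need not lie in $L_G^1(\Omega_T)$, so Theorem \ref{G_Def} does not even give $\hat{\mathbb{E}}$ of it a meaning;
\item $H_G^\alpha$-convergence $\xi_n\to\xi$ controls neither side of the inequality.
\end{itemize}
So, as organized, your argument proves the inequality exactly for $p\in(0,\alpha]$, which is the correct statement. The paper's ``$p>0$'' cannot be reached by density.

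If you want $p>\alpha$ as well, read $\hat{\mathbb{E}}$ as the upper expectation $\sup_{P\in\mathcal{P}}E_P$ on Borel functions, and do not use density for the inequality itself. Instead, fix $P$ and show that the q.s.\ continuous $G$-integral of a general $\xi\in H_G^\alpha$ coincides $P$-a.s.\ with the classical It\^o integral of $\xi$ under $P$:
\begin{itemize}
\item simple $\xi_n\to\xi$ in $H_G^\alpha$ give $\int_0^T|\xi_n-\xi|^2ds\to0$ in $P$-probability, hence u.c.p.\ convergence of the classical integrals;
\item a subsequence of the $G$-integrals converges quasi-surely, uniformly in $t$, to the continuous version.
\end{itemize}
Then apply classical BDG to $\xi$ directly under each $P$, allowing both sides to be infinite, and take the supremum. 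Your Steps 2 and 3 are otherwise fine.
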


Next, we  introduce a $G$-expectation space on the basis of 
$M_{G}^{p}(0,T;\mathbb{R}^{n}).$ Set%
\begin{equation}
\begin{array}{l@{}l}
\tilde{M}_{G}^{p}(0,T;\mathbb{R}^{n})= {}& \left\{ X:\left[ 0,T\right] \times
\Omega \mapsto \mathbb{R}^{n}|X\left( \cdot \right) \in M_{G}^{p}(0,T;%
\mathbb{R}^{n})\right. \\ 
&\left. \text{and }X\left( t\right) \in L_{G}^{p}\left( \Omega
_{t}\right) \text{ for each }t\in \left[ 0,T\right] \right\} .%
\end{array}%
\end{equation}%
The definition of $\tilde{M}_{G}^{p}$-continuity for processes in this space
is given as follows for $p\geq 1$.

\begin{definition}
A process $X(\cdot )\in \tilde{M}_{G}^{p}(0,T)$ is called $\tilde{M}_{G}^{p}$%
-continuous for $p\geq 1$ if it holds that for any $t\in \lbrack 0,T]$, 
\begin{equation}
\lim_{t\rightarrow t_{0}}\hat{\mathbb{E}}[|X(t)-X(t_{0})|^{p}]=0,\text{ for
each fixed }t_{0}\in \lbrack 0,T].  \label{mean_contin}
\end{equation}
For the case $p=2$, $\tilde{M}_{G}^{2}$-continuity is referred to as
mean-square continuity. Then we present a basic property of such processes.
\end{definition}

\begin{lemma}
[\negthinspace \negthinspace \protect\cite{zhao}]Assume that $X(\cdot )\in \tilde{M}_{G}^{2}(0,T)$ is
mean-square continuous. Then we have 
\begin{equation}
\sup_{t\in \lbrack 0,T]}\hat{\mathbb{E}}[|X(t)|^{2}]<\infty .
\end{equation}
\end{lemma}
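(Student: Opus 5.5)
The plan is to use compactness of $[0,T]$ together with the continuity property (\ref{mean_contin}). Put $f(t):=\hat{\mathbb{E}}[|X(t)|^{2}]$. Since $X(t)\in L_{G}^{2}(\Omega_{t})$ for every $t$, each value $f(t)$ is finite. We need to upgrade this pointwise finiteness to a uniform bound, and the natural tool is to show that $f$ is continuous on the compact interval $[0,T]$.

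\textbf{Step 1: continuity of $t\mapsto \hat{\mathbb{E}}[|X(t)|^{2}]^{1/2}$.} We use that $\|\eta\|_{L_{G}^{2}}=\hat{\mathbb{E}}[|\eta|^{2}]^{1/2}$ is a seminorm on $L_{G}^{2}(\Omega_{T})$. The triangle inequality holds because $\hat{\mathbb{E}}$ is sublinear; equivalently, by Theorem \ref{G_Def} it is a supremum over $P\in\mathcal{P}$ of the $L^{2}(P)$ norms, and a supremum of seminorms is a seminorm. The reverse triangle inequality then gives
\begin{equation*}
\bigl| f(t)^{1/2}-f(t_{0})^{1/2}\bigr| \leq \hat{\mathbb{E}}[|X(t)-X(t_{0})|^{2}]^{1/2},
\end{equation*}
and by mean-square continuity (\ref{mean_contin}) the right-hand side tends to $0$ as $t\rightarrow t_{0}$. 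Hence $f^{1/2}$, and therefore $f$, is a real-valued continuous function on $[0,T]$.

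\textbf{Step 2: conclusion.} A continuous real function on the compact set $[0,T]$ attains its maximum, which gives $\sup_{t\in[0,T]}f(t)<\infty$.

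One can instead avoid continuity of $f$ by a finite-cover argument. For each $t_{0}$, choose $\delta_{t_{0}}$ such that $\hat{\mathbb{E}}[|X(t)-X(t_{0})|^{2}]\leq 1$ whenever $|t-t_{0}|<\delta_{t_{0}}$. On that neighbourhood $f(t)\leq 2f(t_{0})+2$, using $|a+b|^{2}\leq 2|a|^{2}+2|b|^{2}$ and sublinearity. Extracting a finite subcover of $[0,T]$ then yields the uniform bound.

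The only point needing care is that the whole argument relies on each $f(t)$ being finite, which holds because $X(t)\in L_{G}^{2}$, and on the triangle inequality for $\hat{\mathbb{E}}[|\cdot|^{2}]^{1/2}$. Both are standard, so I expect no real obstacle.
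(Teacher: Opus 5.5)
Your argument is correct. Sublinearity of $\hat{\mathbb{E}}$ makes $\hat{\mathbb{E}}[|\cdot|^{2}]^{1/2}$ a seminorm, so mean-square continuity makes $t\mapsto\hat{\mathbb{E}}[|X(t)|^{2}]$ continuous, and boundedness then follows from compactness of $[0,T]$; your finite-subcover variant works equally well. The paper gives no proof of its own and imports the lemma from \cite{zhao}, and your compactness argument is the standard route to this statement, so nothing is missing.
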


The following result is the monotone convergence theorem within $G$-expectation %
framework, which is different from the classical case.

\begin{theorem}[\negthinspace \negthinspace \protect\cite{Denis2011function}]

\label{monotone}Let $X_{n},$ $n\geq 1$ and $X$ are $\mathcal{B}\left( \Omega
_{T}\right) $-measureable. Suppose that $\left\{ X_{n}\right\} _{n\geq 1}\in
L_{G}^{1}\left( \Omega _{T}\right) $ satisfy $X_{n}\downarrow X,$ q.s.. Then
we obtain $\mathbb{\hat{E}}\left[ X_{n}\right] \downarrow \mathbb{\hat{E}}%
\left[ X\right] .$
\end{theorem}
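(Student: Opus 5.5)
The plan is to go through the representation of Theorem \ref{G_Def}, $\mathbb{\hat{E}}[\cdot]=\sup_{P\in\mathcal{P}}E_P[\cdot]$, combined with the weak compactness of $\mathcal{P}$ and Proposition \ref{weekconverge}. Since $X_n\downarrow X$ q.s. and $\mathbb{\hat{E}}$ is monotone, the sequence $\mathbb{\hat{E}}[X_n]$ is nonincreasing. Also $\mathbb{\hat{E}}[X_n]\geq \mathbb{\hat{E}}[X]:=\sup_{P}E_P[X]$, interpreted for the Borel measurable $X$ as the upper expectation. So it suffices to prove
\[
\lim_{n\to\infty}\mathbb{\hat{E}}[X_n]\leq \sup_{P\in\mathcal{P}}E_P[X].
\]
If the left side is $-\infty$ there is nothing to prove, so we may assume it is finite or fix any number strictly below it.

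First, for each $n$ choose $P_n\in\mathcal{P}$ with $E_{P_n}[X_n]\geq \mathbb{\hat{E}}[X_n]-1/n$. By weak compactness of $\mathcal{P}$, pass to a subsequence (not relabelled) with $P_n\to P^*\in\mathcal{P}$ weakly. Fix $m$. For $n\geq m$ monotonicity gives $X_n\leq X_m$ q.s., hence $P_n$-a.s. Therefore
\[
E_{P_n}[X_m]\geq E_{P_n}[X_n]\geq \mathbb{\hat{E}}[X_n]-\frac1n\geq \lim_{k}\mathbb{\hat{E}}[X_k]-\frac1n .
\]
Since $X_m\in L_G^1(\Omega_T)$, Proposition \ref{weekconverge} lets $n\to\infty$ and gives $E_{P^*}[X_m]\geq \lim_k\mathbb{\hat{E}}[X_k]$ for every $m$.

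Second, let $m\to\infty$ under the single measure $P^*$. We have $X_m\downarrow X$ $P^*$-a.s., since the exceptional set is polar. Moreover $X_m\leq X_1$ and $E_{P^*}[|X_1|]\leq \mathbb{\hat{E}}[|X_1|]<\infty$. The classical monotone convergence theorem, applied to $X_1-X_m\uparrow X_1-X$, therefore yields $E_{P^*}[X_m]\downarrow E_{P^*}[X]$, where the limit may be $-\infty$. Hence
\[
\lim_k\mathbb{\hat{E}}[X_k]\leq E_{P^*}[X]\leq \sup_{P\in\mathcal{P}}E_P[X]=\mathbb{\hat{E}}[X],
\]
which closes the argument.

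The main obstacle is the first step, namely passing to the limit in $E_{P_n}[X_m]$. A general Borel function would not be continuous with respect to weak convergence. This is why the hypothesis $X_n\in L_G^1(\Omega_T)$ is essential: Proposition \ref{weekconverge} supplies exactly this continuity. Weak compactness of $\mathcal{P}$, as opposed to mere tightness, is also needed so that the limit $P^*$ stays in $\mathcal{P}$, and hence the null sets of $P^*$ include the polar sets. This is also why the theorem holds for decreasing sequences but not, without further conditions, for increasing ones.
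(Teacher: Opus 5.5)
The paper gives no proof of this statement and simply quotes it from \cite{Denis2011function}; your argument is correct and is essentially the standard proof behind that result: near-maximizing measures $P_n$, a weakly convergent subsequence with limit $P^*\in\mathcal{P}$, Proposition \ref{weekconverge} applied to each fixed $X_m$, and then classical monotone convergence under the single measure $P^*$ with $X_1$ as an integrable upper bound. One correction to your closing comment: the increasing case is the elementary one (interchange $\sup_n$ and $\sup_{P\in\mathcal{P}}$ and apply monotone convergence under each $P$, which needs only $E_P[X_1^-]<\infty$), while the decreasing case is exactly where weak compactness and Proposition \ref{weekconverge} are indispensable.
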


\subsection{%
\texorpdfstring{Stochastic
Volterra
Integral
equations
driven
by
$G$-Brownian
motion}{Backward
Stochastic
differential
equations
driven
by G-Brownian
motion}%
}

\noindent

In the following, we review some key results and estimates for
one-dimensional $G$-SVIE established in \cite{zhao}. By analogous arguments, we
further give the corresponding conclusions for the $d$-dimensional setting.
Set 
\begin{equation*}
\Delta \lbrack 0,T]=\left\{ (t,s)\in \lbrack 0,T]^{2}\text{ }|\text{ }0\leq
s\leq t\leq T\right\} .
\end{equation*}%
Let $X\left( \cdot \right) $ be the unique adapted solution to the following 
$G$-SVIE:%
\begin{equation}
X(t)=\phi (t)+\int_{0}^{t}b(t,s,X(s))ds+\underset{i,j=1}{\overset{d}{\sum }}%
\int_{0}^{t}h^{ij}(t,s,X(s))d\left\langle B^{i},B^{j}\right\rangle _{s}+%
\underset{i=1}{\overset{d}{\sum }}\int_{0}^{t}\sigma
^{i}(t,s,X(s))dB_{s}^{i},  \label{G-SVIE_nou}
\end{equation}%
where the coefficients 
\begin{equation*}
\begin{array}{l}
b,h^{ij}:\Delta \times \Omega _{T}\times \mathbb{R}^{n}\rightarrow \mathbb{R}%
^{n}, \\ 
\sigma =\left[ \sigma ^{1},\sigma ^{2},...,\sigma ^{d}\right] :\Delta \times
\Omega _{T}\times \mathbb{R}^{n}\rightarrow \mathbb{R}^{n\times d}, \\ 
\phi :\left[ 0,T\right] \times \Omega _{T}\times \mathbb{R}^{n}\rightarrow 
\mathbb{R}^{n}.%
\end{array}%
\end{equation*}%
The following assumptions are required. For $1\leq i,j\leq d$

(A1) For each $t\in \lbrack 0,T]$ and $x\in \mathbb{R}^{n},\ b(t,\cdot ,x),\
h^{ij}(t,\cdot ,x),\ \sigma ^{i}(t,\cdot ,x)\in M_{G}^{2}(0,t;\mathbb{R}%
^{n}) $;

(A2) There exists a positive constant $L$ such that for each $x,\ y\in 
\mathbb{R}$ and $t,\ s\in \Delta \lbrack 0,T]$, 
\begin{equation*}
|b(t,s,x)-b(t,s,y)|+|h^{ij}(t,s,x)-h^{ij}(t,s,y)|+|\sigma ^{i}(t,s,x)-\sigma
^{i}(t,s,y)|\leq L|x-y|,
\end{equation*}%
\begin{equation*}
|b(t,s,x)|+|h^{ij}(t,s,x)|+|\sigma ^{i}(t,s,x)|\leq L(1+|x|);
\end{equation*}

(A3) For each $(t,s),\ (t^{\prime },s)\in \Delta \lbrack 0,T]$ and $x\in 
\mathbb{R}^{n}$, 
\begin{equation*}
|b(t^{\prime },s,x)-b(t,s,x)|+|h^{ij}(t^{\prime
},s,x)-h^{ij}(t,s,x)|+|\sigma ^{i}(t^{\prime },s,x)-\sigma ^{i}(t,s,x)|\leq
\rho (|t^{\prime }-t|)\left( 1+\left\vert x\right\vert \right) ,
\end{equation*}%
where $\rho :\mathbb{R}^{+}\rightarrow \mathbb{R}^{+}$ is continuous and
strictly increasing with $\rho (0)=0$.

\begin{remark}
Assumption (A3) generalizes condition (H3) in \cite{zhao} by replacing the
original bound with the more general form $\rho (|t^{\prime }-t|)\left(
1+\left\vert x\right\vert \right) .$ By the similar arguments, the well-posedness of $G$-SVIE (\ref{G-SVIE_nou}) still holds under
our weaker assumption (A3).
\end{remark}

\begin{theorem}
\label{non_ujie} Suppose that the assumptions (A1)-(A3) hold and $\phi
(\cdot )\in \tilde{M}_{G}^{2}(0,T;\mathbb{R}^{n}))$ is mean-square
continuous. Then, the $G$-SVIE (\ref{G-SVIE_nou}) admits a unique solution $%
X(\cdot )\in M_{G}^{2}(0,T;\mathbb{R}^{n}))$. Moreover, $X(\cdot )\in \tilde{%
M}_{G}^{2}(0,T;\mathbb{R}^{n}))$ is mean-square continuous.
\end{theorem}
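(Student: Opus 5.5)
We will prove Theorem \ref{non_ujie} by a Picard iteration (equivalently, a contraction argument) carried out in the space of mean-square continuous processes in $\tilde{M}_{G}^{2}(0,T;\mathbb{R}^{n})$, normed by a weighted supremum norm
\begin{equation*}
\Vert X\Vert _{\beta }:=\sup_{t\in \lbrack 0,T]}e^{-\beta t}\left( \hat{\mathbb{E}}[|X(t)|^{2}]\right) ^{1/2},
\end{equation*}
which is finite by the lemma from \cite{zhao} on mean-square continuous processes. The space $\mathcal{S}$ of mean-square continuous elements of $\tilde{M}_{G}^{2}$ is complete under $\Vert \cdot \Vert _{\beta }$. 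Indeed, a Cauchy sequence converges in each $L_{G}^{2}(\Omega _{t})$ uniformly in $t$, so the limit is adapted and a uniform limit of mean-square continuous maps. Moreover $\hat{\mathbb{E}}\int_{0}^{T}|X_{n}-X|^{2}dt\leq T\sup_{t}\hat{\mathbb{E}}|X_{n}(t)-X(t)|^{2}$, so the limit also lies in $M_{G}^{2}$. Define $\Gamma (X)(t)$ as the right-hand side of (\ref{G-SVIE_nou}) for $X\in \mathcal{S}$.

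\textbf{Step 1: $\Gamma $ maps $\mathcal{S}$ into itself.} For fixed $t$, (A1)--(A2) give $b(t,\cdot ,X(\cdot )),h^{ij}(t,\cdot ,X(\cdot )),\sigma ^{i}(t,\cdot ,X(\cdot ))\in M_{G}^{2}(0,t)$. One checks this first for step processes $X$ and then passes to limits using the Lipschitz bound. Hence all integrals are well defined and $\Gamma (X)(t)\in L_{G}^{2}(\Omega _{t})$. The key estimate is mean-square continuity. For $t<t^{\prime }$ we split $\Gamma (X)(t^{\prime })-\Gamma (X)(t)$ into three parts:
\begin{itemize}
\item the difference $\phi (t^{\prime })-\phi (t)$;
\item the integrals over $[t,t^{\prime }]$ with integrand at $t^{\prime }$;
\item the integrals over $[0,t]$ of the coefficient differences $b(t^{\prime },s,X)-b(t,s,X)$, and likewise for $h^{ij}$ and $\sigma ^{i}$.
\end{itemize}
We bound the $ds$ and $d\langle B^{i},B^{j}\rangle $ terms by Cauchy--Schwarz, using $|d\langle B^{i},B^{j}\rangle _{s}|\leq C\bar{\sigma}^{2}ds$. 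We bound the $dB^{i}$ terms by the BDG inequality (\ref{BDG}) with $p=2$, i.e. the $G$-isometry. The second part is then at most $C|t^{\prime }-t|(1+\sup_{s}\hat{\mathbb{E}}|X(s)|^{2})$ by linear growth. The third part is at most $C\rho (|t^{\prime }-t|)^{2}(1+\sup_{s}\hat{\mathbb{E}}|X(s)|^{2})$ by (A3). This is exactly where the generalized (A3) is used, and it only costs replacing $|t^{\prime }-t|$ by $\rho $. Taken together with the continuity of $\phi $, all three parts vanish as $t^{\prime }\rightarrow t$.

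\textbf{Step 2: contraction.} For $X,Y\in \mathcal{S}$, the same estimates together with (A2) give
\begin{equation*}
\hat{\mathbb{E}}|\Gamma (X)(t)-\Gamma (Y)(t)|^{2}\leq C\int_{0}^{t}\hat{\mathbb{E}}|X(s)-Y(s)|^{2}ds.
\end{equation*}
This yields $\Vert \Gamma X-\Gamma Y\Vert _{\beta }^{2}\leq \frac{C}{2\beta }\Vert X-Y\Vert _{\beta }^{2}$. Choosing $\beta $ large makes $\Gamma $ a contraction, so Banach's fixed point theorem gives existence in $\mathcal{S}$. For uniqueness in the larger class $M_{G}^{2}(0,T;\mathbb{R}^{n})$, we apply the same bound pointwise in $t$ to two solutions, which is valid since $\hat{\mathbb{E}}$ is sublinear. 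We then integrate in $t$ and apply Gronwall to $\hat{\mathbb{E}}\int_{0}^{t}|X-Y|^{2}$; here we use $\int_{0}^{t}\hat{\mathbb{E}}[\cdot ]ds\leq \hat{\mathbb{E}}[\int \cdot ]$ after comparing through the representation of Theorem \ref{G_Def}. A solution in $M_{G}^{2}$ automatically belongs to $\tilde{M}_{G}^{2}$ and is mean-square continuous by Step 1, so it coincides with the fixed point.

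\textbf{Main obstacle.} The delicate point is the measurability in Step 1: we must show that the integrand $s\mapsto b(t,s,X(s))$ genuinely lies in $M_{G}^{2}(0,t)$ for each fixed $t$, and not just in some $L^{2}(P)$ for every $P\in \mathcal{P}$. We will establish this by approximating $X$ in $M_{G}^{2}$ by step processes $X^{m}\in M_{G}^{0}$. On each piece $b(t,\cdot ,\xi _{j})$ belongs to $M_{G}^{2}$, which requires (A1) extended from deterministic $x$ to $\xi _{j}\in Lip(\Omega _{u_{j}})$ via a further approximation of $\xi _{j}$ by simple random variables and the Lipschitz bound. Finally we pass to the limit using $\hat{\mathbb{E}}\int |b(t,s,X^{m})-b(t,s,X)|^{2}ds\leq L^{2}\Vert X^{m}-X\Vert _{M_{G}^{2}}^{2}$. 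The rest is the standard one-dimensional argument of \cite{zhao}, applied componentwise to the $d$-dimensional noise.
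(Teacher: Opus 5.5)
The paper gives no proof of this theorem. It defers to \cite{zhao} and only remarks that the weaker form of (A3) is handled by the same argument. Your route is sound and is presumably that argument: a Banach fixed point for the weighted sup-norm on the mean-square continuous elements of $\tilde{M}_G^2$, with the factor $(1+|x|)$ in (A3) absorbed through $\sup_s\hat{\mathbb{E}}[|X(s)|^2]<\infty$. Existence is fine apart from two technical points noted at the end.

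\textbf{The uniqueness step rests on a false inequality.} The representation theorem gives only
\[
\hat{\mathbb{E}}\Bigl[\int_0^t f(s)\,ds\Bigr]=\sup_{P\in\mathcal{P}}\int_0^t E_P[f(s)]\,ds\le\int_0^t\hat{\mathbb{E}}[f(s)]\,ds .
\]
The reverse inequality $\int_0^t\hat{\mathbb{E}}[\cdot]\,ds\le\hat{\mathbb{E}}[\int_0^t\cdot\,ds]$, which you invoke, fails because the maximizing $P$ may depend on $s$. For example, take $\xi=|B_{T/2}|^2\wedge 1$ and the nonnegative adapted process $f=\xi I_{[T/2,3T/4)}+(1-\xi)I_{[3T/4,T]}$. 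Then $\hat{\mathbb{E}}[\int_0^Tf(s)\,ds]=T/4$, but $\int_0^T\hat{\mathbb{E}}[f(s)]\,ds=\frac{T}{4}\bigl(1+\hat{\mathbb{E}}[\xi]+\hat{\mathbb{E}}[-\xi]\bigr)>T/4$ as soon as $\xi$ has mean uncertainty.

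So you cannot close a Gronwall inequality for $\hat{\mathbb{E}}[\int_0^t|X-Y|^2ds]$ starting from your Step 2 bound, which has $\int_0^t\hat{\mathbb{E}}[\cdot]\,ds$ on the right. Instead, keep the estimate in the form that Cauchy--Schwarz and (\ref{BDG}) produce directly, with $\hat{\mathbb{E}}$ outside the time integral. Applied to a single solution $X\in M_G^2$, this gives
\[
\sup_{t\in[0,T]}\hat{\mathbb{E}}\bigl[|X(t)|^2\bigr]\le C\Bigl(1+\sup_{t\in[0,T]}\hat{\mathbb{E}}\bigl[|\phi(t)|^2\bigr]+\hat{\mathbb{E}}\Bigl[\int_0^T|X(s)|^2\,ds\Bigr]\Bigr)<\infty .
\]
This is exactly what Step 1 needs before it can show that $X$ is mean-square continuous, i.e.\ $X\in\mathcal{S}$; your sentence ``by Step 1'' silently assumes it. Uniqueness is then just uniqueness of the Banach fixed point, and the Gronwall step becomes unnecessary. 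Alternatively, set $\Lambda(t)=\hat{\mathbb{E}}[\int_0^t|X(s)-Y(s)|^2ds]$. Then $\hat{\mathbb{E}}[|X(t)-Y(t)|^2]\le C\Lambda(t)$, hence $\Lambda(t)\le\int_0^t\hat{\mathbb{E}}[|X(r)-Y(r)|^2]\,dr\le C\int_0^t\Lambda(r)\,dr$, which uses only the correct direction.

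\textbf{Two smaller points.}
\begin{enumerate}
\item In your measurability argument, you cannot approximate $\xi_j$ by simple random variables in this framework. Indicators $I_{\{\xi_j\in A\}}$ are in general not quasi-continuous, so they are not in $L_G^2$. Use a Lipschitz partition of unity $\{\chi_k\}$ subordinate to $\delta$-balls around points $x_k$ covering the bounded range of $\xi_j$. Each $\chi_k(\xi_j)$ is a bounded element of $Lip(\Omega_{u_j})$, so $\sum_k\chi_k(\xi_j)\,b(t,\cdot,x_k)I_{[u_j,u_{j+1})}\in M_G^2$. It differs from $b(t,\cdot,\xi_j)I_{[u_j,u_{j+1})}$ by at most $L\delta$ uniformly.
\item To have $\Gamma(\mathcal{S})\subset\tilde{M}_G^2$, you also need $t\mapsto\Gamma(X)(t)$ to belong to $M_G^2(0,T)$, which you do not check. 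It follows from adaptedness and mean-square continuity by approximating with $\sum_k\Gamma(X)(t_k)I_{[t_k,t_{k+1})}(t)$, again using only $\hat{\mathbb{E}}[\int\cdot\,dt]\le\int\hat{\mathbb{E}}[\cdot]\,dt$.
\end{enumerate}
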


In what follows, we provide a priori estimates that will be used in this
paper.

\begin{proposition}
\label{non_uguji1}Suppose that the assumptions (A1)-(A3) hold and $\phi
(\cdot )\in \tilde{M}_{G}^{2}(0,T;\mathbb{R}^{n}))$ is mean-square
continuous. Let $X(\cdot )$ be the adapted solution of the $G$-SVIE (\ref%
{G-SVIE_nou}). Set $|l\left( t,s,0\right) |=|b\left( t,s,0\right) |+\underset%
{i,j=1}{\overset{d}{\sum }}|h^{ij}\left( t,s,0\right) |.$ Then 
\begin{equation}
\begin{array}{l@{}l}
\displaystyle\hat{\mathbb{E}}[|X(t)|^{2}]\leq {}& \displaystyle C_{1}\left( 
\bar{\sigma},T\right) \Biggl(\hat{\mathbb{E}}[|\phi (t)|^{2}]+\hat{\mathbb{E}%
}\biggl[\biggl(\int_{0}^{T}|l(t,s,0)|ds\biggr)^{2}\biggr]+\underset{i=1}{%
\overset{d}{\sum }}\hat{\mathbb{E}}\biggl[\int_{0}^{T}|\sigma
^{i}(t,s,0)|^{2}ds\biggr]\Biggr) \\[4pt] 
{}& \displaystyle+C_{2}(\bar{\sigma},L,T,d)\int_{0}^{t}\Biggl(\hat{\mathbb{E}}%
[|\phi (r)|^{2}]+\hat{\mathbb{E}}\biggl[\biggl(\int_{0}^{T}|l(r,s,0)|ds%
\biggr)^{2}\biggr]+\underset{i=1}{\overset{d}{\sum }}\hat{\mathbb{E}}\biggl[%
\int_{0}^{T}|\sigma ^{i}(r,s,0)|^{2}ds\biggr]\Biggr)dr,%
\end{array}%
\end{equation}%
where $C_{1}(\bar{\sigma},T)$, $C_{2}(\bar{\sigma},T,L,d)$ are constants
depending on $\bar{\sigma},T$, and $\bar{\sigma},T,L,d$ respectively.
Moreover, if%
\begin{equation*}
\begin{array}{c}
\displaystyle\underset{t\in \lbrack 0,T]}{\sup }\Biggl\{\hat{\mathbb{E}}%
\biggl[\biggl(\int_{0}^{T}|l(t,s,0)|ds\biggr)^{2}\biggr]+\underset{i=1}{%
\overset{d}{\sum }}\hat{\mathbb{E}}\biggl[\int_{0}^{T}|\sigma
^{i}(t,s,0)|^{2}ds\biggr]\Biggr\}<\infty ,%
\end{array}%
\end{equation*}%
then 
\begin{equation}
\begin{array}{c}
\begin{array}{l@{}l}
\displaystyle\hat{\mathbb{E}}[|X(t)|^{2}]\leq {}& \displaystyle C(\bar{\sigma}%
,T,L,d)\underset{t\in \lbrack 0,T]}{\sup }\Biggl\{\hat{\mathbb{E}}[|\phi
(t)|^{2}]+\hat{\mathbb{E}}\biggl[\biggl(\int_{0}^{T}|l(t,s,0)|ds\biggr)^{2}%
\biggr]+\underset{i=1}{\overset{d}{\sum }}\hat{\mathbb{E}}\biggl[%
\int_{0}^{T}|\sigma ^{i}(t,s,0)|^{2}ds\biggr]\Biggr\}.%
\end{array}%
\end{array}%
\end{equation}
\end{proposition}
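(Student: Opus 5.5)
The plan is a pointwise-in-$t$ estimate followed by a Gronwall-type (Volterra) inequality. By Theorem \ref{non_ujie}, $X(\cdot)\in\tilde M_G^2(0,T;\mathbb R^n)$ is mean-square continuous. Hence $t\mapsto \hat{\mathbb E}[|X(t)|^2]$ is finite, and by the lemma from \cite{zhao} it is bounded on $[0,T]$; it is also measurable, so integrating it in time is legitimate.

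\textbf{Step 1: split and bound each term.} Fix $t$ and split each coefficient as $f(t,s,X(s))=f(t,s,0)+[f(t,s,X(s))-f(t,s,0)]$ for $f=b,h^{ij},\sigma^i$. By sublinearity of $\hat{\mathbb E}$ and $(a_1+\dots+a_m)^2\le m\sum a_k^2$, $\hat{\mathbb E}[|X(t)|^2]$ is bounded by a constant times the sum of the following terms.
\begin{itemize}
\item The term $\hat{\mathbb E}[|\phi(t)|^2]$.
\item The drift terms. Use $d\langle B^i,B^j\rangle_s\le\bar\sigma^2 ds$ in total variation (q.s.), which follows from the matrix bound on $[d\langle B^i,B^j\rangle]$, so that $|d\langle B^i,B^j\rangle_s|\le \bar\sigma^2 ds$. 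This gives $\hat{\mathbb E}[(\int_0^T|l(t,s,0)|ds)^2]$ times a constant in $\bar\sigma$, plus a Lipschitz remainder. By (A2) and Cauchy--Schwarz, the remainder is bounded by $C(L,T,\bar\sigma,d)\int_0^t\hat{\mathbb E}[|X(s)|^2]ds$. Here $\hat{\mathbb E}[\int\cdot\,ds]\le\int\hat{\mathbb E}[\cdot]ds$ holds by the representation theorem (Theorem \ref{G_Def}) together with Fubini under each $P$.
\item The stochastic integrals. Use the BDG inequality (\ref{BDG}) with $p=2$, in fact just the isometry $\hat{\mathbb E}[|\int_0^t\eta\,dB^i|^2]\le\bar\sigma^2\hat{\mathbb E}[\int_0^t|\eta|^2ds]$. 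This gives $\bar\sigma^2\hat{\mathbb E}[\int_0^T|\sigma^i(t,s,0)|^2ds]$ plus $\bar\sigma^2L^2\int_0^t\hat{\mathbb E}[|X(s)|^2]ds$.
\end{itemize}
The split uses the fact that $s\mapsto\sigma^i(t,s,X(s))$ lies in $M_G^2(0,t)$, which is part of the well-posedness of Theorem \ref{non_ujie}. Thus, with $\Phi(t)$ denoting the bracketed quantity in the statement,
\[
\hat{\mathbb E}[|X(t)|^2]\le C_1\Phi(t)+K\int_0^t\hat{\mathbb E}[|X(s)|^2]ds .
\]
The constants must be tracked so that $C_1$ depends only on $\bar\sigma,T$. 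The $L$-dependence appears only in $K$, because the zero-order terms carry no $L$.

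\textbf{Step 2: Gronwall in integral form.} Set $u(t)=\hat{\mathbb E}[|X(t)|^2]$, which is bounded and measurable. The integral Gronwall inequality gives
\[
u(t)\le C_1\Phi(t)+C_1K\int_0^t\Phi(r)e^{K(t-r)}dr\le C_1\Phi(t)+C_2\int_0^t\Phi(r)dr,
\]
with $C_2=C_1Ke^{KT}$. This is the first claim. For this step we need $\Phi$ to be integrable; if it is not, the right-hand side is $+\infty$ and the inequality holds trivially.

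\textbf{Step 3: the uniform bound.} Under the supremum hypothesis, and using $\sup_t\hat{\mathbb E}[|\phi(t)|^2]<\infty$ (which follows from mean-square continuity via the lemma), bound $\Phi(t)$ and $\int_0^t\Phi(r)dr$ by $\sup\Phi$ and $T\sup\Phi$ respectively. This gives $C=C_1+TC_2$.

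\textbf{Main obstacle.} The main difficulty is the interchange $\hat{\mathbb E}[\int_0^t|X(s)|^2ds]\le\int_0^t\hat{\mathbb E}[|X(s)|^2]ds$, which requires measurability of $s\mapsto \hat{\mathbb E}[|X(s)|^2]$. I would handle it with mean-square continuity, which makes the map continuous, and then apply the representation $\sup_P E_P$ inside the integral.
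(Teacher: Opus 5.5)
Your proposal is correct and essentially the argument behind this estimate: the paper does not reprove it but defers to the one-dimensional argument of \cite{zhao}, and the shape $C_1\Phi(t)+C_2\int_0^t\Phi(r)\,dr$ of the stated bound is exactly what your pointwise estimate $\hat{\mathbb E}[|X(t)|^2]\le C_1\Phi(t)+K\int_0^t\hat{\mathbb E}[|X(s)|^2]ds$ followed by Gronwall produces. One bookkeeping point: to keep $C_1$ free of $d$ as claimed, bound the zero-order diffusion as a single matrix integral, $\hat{\mathbb E}\bigl[\bigl|\sum_i\int_0^t\sigma^i(t,s,0)dB^i_s\bigr|^2\bigr]\le\bar\sigma^2\sum_i\hat{\mathbb E}\bigl[\int_0^T|\sigma^i(t,s,0)|^2ds\bigr]$, using $\mathrm{tr}(\sigma\gamma\sigma^T)\le\bar\sigma^2|\sigma|^2$ under each $P$; applying the isometry to each $dB^i$ separately and then $(a_1+\dots+a_m)^2\le m\sum_k a_k^2$ over the $d$ summands would put a factor $d$ into $C_1$.
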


\begin{proposition}
\label{non_uguji2}Suppose that the assumptions (A1)-(A3) hold and $\phi
_{i}(\cdot )\in \tilde{M}_{G}^{2}(0,T;\mathbb{R}^{n}))$, $i=1,2$ are
mean-square continuous. Let $X_{i}(\cdot )\in \tilde{M}_{G}^{2}(0,T)$ be the
adapted solutions of $G$-SVIEs (\ref{G-SVIE_nou}) corrsponding to the data $%
b_{i},h_{i},\sigma _{i}$ for $i=1,2.$ Then there exists constants $C_{1}(%
\bar{\sigma},T)$ and $C_{2}(\bar{\sigma},T,L)$ such that%
\begin{equation}
\begin{array}{c}
\displaystyle\hat{\mathbb{E}}[|X_{1}(t)-X_{2}(t)|^{2}]\leq \zeta (t)+C_{2}(%
\bar{\sigma},L,T,d)\int_{0}^{t}\zeta (r)dr,\quad t\in \lbrack 0,T],%
\end{array}%
\end{equation}%
where $\hat{\phi}(t)=\phi _{1}(t)-\phi _{2}(t)$, $|\hat{l}%
(t,s)|=|b_{1}(t,s,X_{2}(s))-b_{2}(t,s,X_{2}(s))|+\underset{i,j=1}{\overset{d}%
{\sum }}|h_{1}^{ij}(t,s,X_{2}(s))-h_{2}^{ij}(t,s,X_{2}(s)|$, and%
\begin{equation*}
\begin{array}{c}
\displaystyle\zeta (t)=C_{1}(\bar{\sigma},T)\Biggl(\hat{\mathbb{E}}[|\hat{%
\phi}(t)|^{2}]+\hat{\mathbb{E}}\biggl[\biggl(\int_{0}^{T}|\hat{l}(t,s)|ds%
\biggr)^{2}\biggr]+\underset{i=1}{\overset{d}{\sum }}\hat{\mathbb{E}}\biggl[%
\int_{0}^{T}|\hat{\sigma}^{i}(t,s)|^{2}ds\biggr]\Biggr).%
\end{array}%
\end{equation*}%
Moreover, if%
\begin{equation*}
\begin{array}{c}
\displaystyle\underset{t\in \lbrack 0,T]}{\sup }\Biggl\{\hat{\mathbb{E}}%
\biggl[\biggl(\int_{0}^{T}|\hat{l}(t,s)|ds\biggr)^{2}\biggr]+\underset{i=1}{%
\overset{d}{\sum }}\hat{\mathbb{E}}\biggl[\int_{0}^{T}|\hat{\sigma}%
^{i}(t,s)|^{2}ds\biggr]\Biggr\}<\infty ,%
\end{array}%
\end{equation*}%
we have%
\begin{equation}
\begin{array}{l@{}l}
\hat{\mathbb{E}}[|X_{1}(t)-X_{2}(t)|^{2}]\leq {}& \displaystyle C(\bar{\sigma}%
,T,L,d)\Biggl\{\underset{t\in \lbrack 0,T]}{\sup }\hat{\mathbb{E}}[|\hat{\phi%
}(t)|^{2}]+\underset{t\in \lbrack 0,T]}{\sup }\hat{\mathbb{E}}\biggl[\biggl(%
\int_{0}^{T}|\hat{l}(t,s)|ds\biggr)^{2}\biggr]\Biggr. \\ 
{}& \displaystyle\Biggl.+\underset{t\in \lbrack 0,T]}{\sup }\underset{i=1}{%
\overset{d}{\sum }}\hat{\mathbb{E}}\biggl[\int_{0}^{T}|\hat{\sigma}%
^{i}(t,s)|^{2}ds\biggr]\Biggr\}.%
\end{array}%
\end{equation}
\end{proposition}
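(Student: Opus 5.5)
We prove the stability estimate of Proposition \ref{non_uguji2} by writing the equation for the difference $\hat X(t)=X_1(t)-X_2(t)$, bounding its second moment pointwise in $t$ by a Volterra-type integral inequality, and closing with a Gronwall argument, as in the a priori estimate of Proposition \ref{non_uguji1}.

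\textbf{Step 1: difference equation.} Subtracting the two $G$-SVIEs and adding and subtracting $b_1(t,s,X_2(s))$, $h_1^{ij}(t,s,X_2(s))$, $\sigma_1^i(t,s,X_2(s))$ gives
\begin{equation*}
\hat X(t)=\hat\phi(t)+\int_0^t\big(\hat b_1(t,s)+\hat b(t,s)\big)ds+\sum_{i,j=1}^d\int_0^t\big(\hat h_1^{ij}(t,s)+\hat h^{ij}(t,s)\big)d\langle B^i,B^j\rangle_s+\sum_{i=1}^d\int_0^t\big(\hat\sigma_1^{i}(t,s)+\hat\sigma^i(t,s)\big)dB^i_s .
\end{equation*}
Here $\hat b_1(t,s)=b_1(t,s,X_1(s))-b_1(t,s,X_2(s))$, and $\hat h_1^{ij},\hat\sigma_1^i$ are defined analogously. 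The remaining terms $\hat b,\hat h^{ij},\hat\sigma^i$ are the differences of the data evaluated along $X_2$, which make up $\hat l$ and $\hat\sigma^i$ in $\zeta$. By (A2), $|\hat b_1|+|\hat h_1^{ij}|+|\hat\sigma_1^i|\le L|\hat X(s)|$.

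\textbf{Step 2: moment bounds.} Fix $t$ and apply $(a_1+\dots+a_m)^2\le m\sum a_i^2$ together with sublinearity of $\hat{\mathbb E}$.

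For the $ds$ terms, Cauchy–Schwarz gives $\big(\int_0^t L|\hat X(s)|ds\big)^2\le L^2T\int_0^t|\hat X(s)|^2ds$.

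For the $d\langle B^i,B^j\rangle$ terms, use $|d\langle B^i,B^j\rangle_s|\le \bar\sigma^2 ds$ q.s. to reduce them to $ds$ integrals. This is what allows the inhomogeneous part to be collected into $\hat{\mathbb E}[(\int_0^T|\hat l(t,s)|ds)^2]$.

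For the $dB^i$ terms, use the $G$-Itô isometry or the BDG estimate (\ref{BDG}) with $p=2$. The parameter $t$ is frozen inside the integrand, so $s\mapsto\hat\sigma^i(t,s)$ is a legitimate element of $M_G^2(0,t)$ by (A1). Since $\hat X\in M_G^2$, we may interchange $\hat{\mathbb E}[\int_0^t|\hat X(s)|^2ds]\le\int_0^t\hat{\mathbb E}[|\hat X(s)|^2]ds$ by sublinearity applied to Riemann-sum approximations.

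Collecting the terms yields
\begin{equation*}
\hat{\mathbb E}[|\hat X(t)|^2]\le \zeta(t)+K\int_0^t\hat{\mathbb E}[|\hat X(s)|^2]ds,\qquad K=K(\bar\sigma,L,T,d).
\end{equation*}

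\textbf{Step 3: Gronwall.} Set $f(t)=\hat{\mathbb E}[|\hat X(t)|^2]$. By Theorem \ref{non_ujie}, each $X_i$ is mean-square continuous, so $f$ is continuous and, by the preceding lemma, bounded. Gronwall's inequality applied to $f\le\zeta+K\int_0^t f$ gives
\begin{equation*}
f(t)\le\zeta(t)+K\int_0^t e^{K(t-r)}\zeta(r)dr\le \zeta(t)+Ke^{KT}\int_0^t\zeta(r)dr,
\end{equation*}
which is the first claim with $C_2=Ke^{KT}$. Note that $\zeta$ need only be measurable and integrable here, not continuous. When the suprema in the statement are finite, bounding $\zeta(r)\le\sup\zeta$ gives the second estimate with $C=C_1(1+TKe^{KT})$.

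\textbf{Main obstacle.} The delicate point is justifying Step 2 in the $G$-framework. Two things must be checked: that the stochastic integral with integrand depending on the external parameter $t$ is well defined for each fixed $t$, and that $\hat{\mathbb E}$ and $\int ds$ may be interchanged. Both follow from (A1), from $\hat X\in\tilde M_G^2$, and from the fact that $\hat{\mathbb E}[\int_0^t\cdot\,ds]\le\int_0^t\hat{\mathbb E}[\cdot]ds$ holds for $M_G^1$ processes. The interchange only requires this direction of inequality, so it holds under sublinearity and is all the argument needs.
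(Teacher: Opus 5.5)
Your proof is correct. The paper itself writes out no proof of this proposition: it imports the estimate from \cite{zhao} ``by analogous arguments'', and your route is the natural argument behind it. That route is the difference equation, the Lipschitz bound, the $G$-It\^o isometry together with $|d\langle B^i,B^j\rangle_s|\le\bar\sigma^2ds$, and Gronwall via mean-square continuity of $X_1-X_2$.

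The one step to tighten is the interchange $\hat{\mathbb{E}}[\int_0^t|\hat X(s)|^2ds]\le\int_0^t\hat{\mathbb{E}}[|\hat X(s)|^2]ds$. Riemann or step approximations do not yield it on their own, because showing their convergence in $M_G^1$ would itself require this same inequality. It follows at once from $\hat{\mathbb{E}}=\sup_{P\in\mathcal{P}}E_P$ (Theorem \ref{G_Def}) and Fubini under each $P$.
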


\section{Stochastic optimal control problem}

In this section, we introduce the optimal control problem governed by
stochastic Volterra integral equation driven by $G$-Brownian motion ($G$%
-SVIE). Let $\left( \Omega _{T},L_{G}^{1}\left( \Omega _{T}\right) ,\mathbb{%
\hat{E}}\right) $ be the $G$-expectation space$,$ on which $\Omega
_{T}=C\left( \left[ 0,T\right] ;\mathbb{R}^{d}\right) $ and $%
B=(B_{t}^{1},B_{t}^{2},...,B_{t}^{d})_{t\in \lbrack 0.T]}^{T}$ is a $d$%
-dimensional $G$-Brownian motion. Set

\begin{equation}
\begin{array}{c}
\Delta \lbrack 0,T]=\{(t,s)\in \lbrack 0,T]^{2}\text{ }|\text{ }s\leq t\}%
\text{ and }\Delta ^{\ast }[0,T]=\{(t,s)\in \lbrack 0,T]^{2}\text{ }|\text{ }%
s\geq t\}.%
\end{array}%
\end{equation}%
For each interval $[a,b]\subseteq \lbrack 0,T],$ $\Delta (a,b],$ $\Delta
\lbrack a,b)$, $\Delta ^{\ast }[a,b)$ and $\Delta ^{\ast }(a,b]$ also can be
defined similarly.

Next, we first give the definition of admissible controls.

\begin{definition}
Let the control domain $U$ be a nonempty convex of subset of $\mathbb{R}%
^{m}. $ The process $u\left( \cdot \right) :[0,T]\times \Omega
_{T}\rightarrow U$ is called an admissible control if $u\left( \cdot \right)
\in \mathcal{U}[0,T],$ where the set of admissible controls is defined as 
\begin{equation*}
\mathcal{U}[0,T]=M_{G}^{2}(0,T;U)=\left\{ u\in M_{G}^{2}(0,T;\mathbb{R}^{m})%
\text{ }|\text{ }u\left( t\right) \in U\text{ for each }t\in \left[ 0,T%
\right] \right\} \text{.}
\end{equation*}
\end{definition}

Consider the stochastic control system governed by the following $G$-SVIE on 
$[0,T]$: for each $u(\cdot )\in \mathcal{U}[0,T],$ 
\begin{equation}
\displaystyle X(t)=\mathbb{\phi }(t)+\int_{0}^{t}b(t,s,X(s),u(s))ds+\overset{%
d}{\underset{i,j=1}{\sum }}\int_{0}^{t}h^{ij}(t,s,X(s),u(s))d\left\langle
B^{i},B^{j}\right\rangle _{s}+\overset{d}{\underset{i=1}{\sum }}%
\int_{0}^{t}\sigma ^{i}(t,s,X(s),u(s))dB_{s}^{i},\text{ }  \label{SVIE}
\end{equation}%
with the cost functional%
\begin{equation}
\begin{array}{c}
\displaystyle J\left( u(\cdot )\right) =\mathbb{\hat{E}}\left[ \varphi
(X(T))+\int_{0}^{T}g(s,X(s),u(s))ds\right] .%
\end{array}%
\end{equation}%
In the above, the coefficients 
\begin{equation*}
b,h:\Delta \lbrack 0,T]\times \mathbb{R}^{n}\times U\times \Omega
_{T}\rightarrow \mathbb{R}^{n};
\end{equation*}%
\begin{equation*}
\sigma =\left[ \sigma ^{1},\sigma ^{2},...,\sigma ^{d}\right] :\Delta
\lbrack 0,T]\times \mathbb{R}^{n}\times U\times \Omega _{T}\rightarrow 
\mathbb{R}^{n\times d};
\end{equation*}%
\begin{equation*}
\mathbb{\varphi },\phi ,:[0,T]\times \mathbb{R}^{n}\times \Omega
_{T}\rightarrow \mathbb{R}^{n};
\end{equation*}%
\begin{equation*}
g:[0,T]\times \mathbb{R}^{n}\times U\times \Omega _{T}\rightarrow \mathbb{R}%
^{n}.
\end{equation*}%
satisfy the following assumptions. For each $1\leq i,j\leq d.$

(H1) For each $t\in \lbrack 0,T]$ and $\left( x,u\right) \in \mathbb{R}%
^{n}\times U,\ b(t,\cdot ,x,u),\ h^{ij}(t,\cdot ,x,u),\ \sigma ^{i}(t,\cdot
,x,u)\in M_{G}^{2}(0,t;\mathbb{R}^{n}),$ $g\left( \cdot ,x,u\right) \in
M_{G}^{1}(0,T;\mathbb{R}^{n}),$ $\varphi \left( x\right) \in L_{G}^{1}\left(
\Omega _{T}\right) $ and $\phi \left( \cdot \right) \in \tilde{M}%
_{G}^{2}(0,T;\mathbb{R}^{n}))$ is mean-square continuous.

(H2) For each fixed $\left( t,s\right) \in \Delta \lbrack 0,T],$ the map $%
\left( x,u\right) \mapsto (b\left( t,s,x,u\right) ,h^{ij}\left(
t,s,x,u\right) ,\sigma ^{i}\left( t,s,x,u\right) ,g\left( s,x,u\right) ,$ $%
\varphi \left( x\right) )$ are differentiable. The derivatives of $%
b,h^{ij},\sigma ^{i}$ in $\left( x,u\right) $ are uniformly bounded.

(H3) There is a modulus of continuity $\rho :\mathbb{R}^{+}\rightarrow 
\mathbb{R}^{+}$ such that for each $t,t^{\prime },s\in \lbrack 0,T]$ and $%
\left( x,u\right) \in \mathbb{R}^{n}\times U,$%
\begin{equation*}
|l(t^{\prime },s,x,u)-l(t,s,x,u)|\leq \rho (|t^{\prime }-t|)\left(
1+\left\vert x\right\vert +\left\vert u\right\vert \right) ,
\end{equation*}%
where $\rho $ is a continuous and strictly increasing function with $\rho
(0)=0$ and $l=b,b_{x},b_{u},h^{ij},h_{x}^{ij},h_{u}^{ij},\sigma ^{i},\sigma
_{x}^{i},\sigma _{u}^{i}.$

(H4) There exists a constant $C>0$ such that for each $\left( t,s\right) \in
\Delta \lbrack 0,T]$ and $\left( x,u\right) \in \mathbb{R}^{n}\times U,$%
\begin{equation}
\left\vert l(t,s,x,u)\right\vert +\left\vert \mathbb{\varphi }_{x}\left(
x\right) \right\vert \leq C\left( 1+\left\vert x\right\vert +\left\vert
u\right\vert \right) ,
\end{equation}%
where $l=b,h^{ij},\sigma ^{i},g_{x},g_{u}$.

(H5) There exists a modulus of continuity $\varpi :\mathbb{R}^{+}\rightarrow 
\mathbb{R}^{+}$ such that for each $t,s\in \lbrack 0,T]$ and $\left(
x_{1},u_{1}\right) ,\left( x_{2},u_{2}\right) $ $\in \mathbb{R}^{n}\times U,$%
\begin{equation}
\left\vert l\left( t,s,x_{1},u_{1}\right) -l\left( t,s,x_{2},u_{2}\right)
\right\vert \leq \varpi \left( \left\vert x_{1}-x_{2}\right\vert +\left\vert
u_{1}-u_{2}\right\vert \right) ,
\end{equation}%
where $l$ is the derivatives of $b,h^{ij},\sigma ^{i},g,\varphi $ in $\left(
x,u\right) .$

(H6) Let $b_{0}(t,s)=b(t,s,0,0),$ $h_{0}^{ij}(t,s)=h^{ij}(t,s,0,0)$ and $%
\sigma _{0}^{i}(t,s)=\sigma ^{i}(t,s,0,0)$ such that the following holds: 
\begin{equation*}
\begin{array}{c}
\displaystyle\underset{t\in \lbrack 0,T]}{\sup }\hat{\mathbb{E}}\biggl[%
\biggl(\int_{0}^{T}\biggl[|b_{0}(t,s)|+|h_{0}^{ij}(t,s)|\biggr]ds\biggr)^{2}%
\biggr]+\hat{\mathbb{E}}\biggl[\int_{0}^{T}|\sigma _{0}^{i}(t,s)|^{2}ds%
\biggr]<\infty .%
\end{array}%
\end{equation*}

The proposition \ref{3.2} below follows directly from Theorem \ref{non_ujie}
and Proposition \ref{non_uguji1}. In what follows, we denote by $C\left(
a,b\right) $ a positive constant that depends on parameters $a,b,$ and which
may vary from line to line.

\begin{proposition}
\label{3.2}Suppose that assumptions (H1)-(H4) hold. Then for each given $%
u(\cdot )\in \mathcal{U}[0,T],$ the forward stochastic control system (\ref%
{SVIE}) admits a unique solution $X(\cdot )\in \tilde{M}_{G}^{2}(0,T)$.
Moreover, under the assumption (H6), there exists a constant $C$ depending
on $\bar{\sigma},T,L$ such that 
\begin{equation}
\begin{array}{l@{}l}
\displaystyle\underset{t\in \lbrack 0,T]}{\sup }\hat{\mathbb{E}}%
[|X(t)|^{2}]\leq {}& \displaystyle C\underset{t\in \lbrack 0,T]}{\sup }\Biggl\{%
\hat{\mathbb{E}}[|\phi (t)|^{2}]+\hat{\mathbb{E}}\biggl [\biggl(%
\int_{0}^{T}|l_{0}(t,s)|ds\biggr)^{2}\biggr]\Biggr. \\ 
{} & \displaystyle+\overset{d}{\underset{i=1}{\sum }}\hat{\mathbb{E}%
}\biggl[\int_{0}^{T}|\sigma _{0}^{i}(t,s)|^{2}ds\biggr]\Biggr\}+C\hat{%
\mathbb{E}}\biggl[\int_{0}^{T}|u(s)|^{2}ds\biggr],%
\end{array}%
\end{equation}%
where $\left\vert l_{0}\left( t,s\right) \right\vert =\left\vert
b(t,s,0,0)\right\vert +\overset{d}{\underset{i,j=1}{\sum }}%
|h_{0}^{ij}(t,s,0,0)|.$
\end{proposition}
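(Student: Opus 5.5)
The plan is to reduce Proposition \ref{3.2} to Theorem \ref{non_ujie} and Proposition \ref{non_uguji1}. Fix $u(\cdot)\in\mathcal{U}[0,T]$ and define the uncontrolled coefficients
\[
\tilde b(t,s,x)=b(t,s,x,u(s)),\quad \tilde h^{ij}(t,s,x)=h^{ij}(t,s,x,u(s)),\quad \tilde\sigma^{i}(t,s,x)=\sigma^{i}(t,s,x,u(s)).
\]
Then (\ref{SVIE}) is exactly (\ref{G-SVIE_nou}) with these data, and it suffices to verify (A1)--(A3) for them.

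\textbf{Lipschitz and growth conditions (A2).} By (H2), the $x$-derivatives of $b,h^{ij},\sigma^i$ are bounded by some constant $L$. The mean value theorem then gives the Lipschitz bound in $x$ uniformly in $(t,s,\omega)$. The linear growth in (A2) is not directly of the form $L(1+|x|)$, because $u(s)$ enters. Two routes are available:
\begin{itemize}
\item[(i)] Use the bounded $u$-derivatives to write $|\tilde l(t,s,x)|\le |l_0(t,s)|+L|x|+L|u(s)|$, and note that the a priori estimate Proposition \ref{non_uguji1} only requires control of $\tilde l(t,s,0)$.
\item[(ii)] Rely on the fact that the proof of Theorem \ref{non_ujie} only uses Lipschitz continuity plus square integrability of $\tilde l(t,\cdot,0)$.
\end{itemize}
I would take route (ii). I would argue that the well-posedness proof in \cite{zhao} is a Picard/contraction argument in which the growth condition serves only to guarantee $\tilde l(t,\cdot,0)\in M_G^2$. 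That membership holds because $\tilde l(t,s,0)=l(t,s,0,u(s))$ and $|l(t,s,0,u(s))-l_0(t,s)|\le L|u(s)|$ with $u\in M_G^2$.

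\textbf{Measurability (A1) and time regularity (A3).} For (A1), $\tilde l(t,\cdot,x)\in M_G^2(0,t)$ requires composing an $M_G^2$ random field with the $M_G^2$ process $u$. I would approximate $u$ by step processes in $M_G^0$. For a step process, $l(t,s,x,u(s))$ is a finite combination of $l(t,s,x,\xi_j)$, and these lie in $M_G^2$ by (H1) together with the Lipschitz continuity in $u$ (approximating $\xi_j$ by simple random variables if needed). The Lipschitz bound in $u$ then passes to the limit in the $M_G^2$ norm. I expect this measurability step to be the main technical obstacle, since in the $G$-framework composition does not automatically preserve membership in $M_G^2$; the uniform Lipschitz dependence on $u$ is what rescues it. For (A3), (H3) gives $|\tilde l(t',s,x)-\tilde l(t,s,x)|\le\rho(|t'-t|)(1+|x|+|u(s)|)$. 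The extra $|u(s)|$ term is square integrable and multiplied by $\rho(|t'-t|)\to0$, so the mean-square continuity argument of Theorem \ref{non_ujie} goes through unchanged, in the same spirit as the Remark after (A3). This yields existence, uniqueness and mean-square continuity of $X\in\tilde M_G^2(0,T)$.

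\textbf{The estimate.} Apply Proposition \ref{non_uguji1}. Since $|\tilde l(t,s,0)|\le|l_0(t,s)|+C|u(s)|$ and $|\tilde\sigma^i(t,s,0)|\le|\sigma_0^i(t,s)|+C|u(s)|$, the elementary inequality $(a+b)^2\le2a^2+2b^2$ and Cauchy--Schwarz give
\[
\biggl(\int_0^T|\tilde l(t,s,0)|ds\biggr)^2\le 2\biggl(\int_0^T|l_0(t,s)|ds\biggr)^2+2C^2T\int_0^T|u(s)|^2ds,
\]
and similarly for $\sigma$. Taking $\hat{\mathbb E}$, using sublinearity, and then taking the supremum over $t$ (finite by (H6) and $u\in M_G^2$), the second bound of Proposition \ref{non_uguji1} yields the claimed inequality, with constants depending only on $\bar\sigma,T,L$ (and $d$).
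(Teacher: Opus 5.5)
Your proposal is correct and takes the same route as the paper, which simply states that Proposition~\ref{3.2} follows directly from Theorem~\ref{non_ujie} and Proposition~\ref{non_uguji1} once the control is frozen; your checks of (A1)--(A3) with the extra $|u(s)|$ terms, and the final $(a+b)^2\le 2a^2+2b^2$ plus Cauchy--Schwarz bound, supply details the paper leaves implicit. One small caution: in the measurability step, approximate $\xi_j$ by a continuous partition of unity on its bounded range rather than by indicator-based simple random variables, since indicators of events need not belong to $L_G^2$ in the $G$-framework.
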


Furthermore, we state our optimal control problem as follows.

\begin{problem}
\label{SG}Under state equation (\ref{SVIE}), find\ a control $\bar{u}(\cdot
)\in \mathcal{U}[0,T]$ such that%
\begin{equation}
J(\bar{u}(\cdot ))=\underset{u(\cdot )\in \mathcal{U}[0,T]}{\inf }J(u(\cdot
)).  \label{J}
\end{equation}
\end{problem}

Any $\bar{u}(\cdot )\in \mathcal{U}[0,T]$ satisfying (\ref{J}) is called an
optimal control of Problem (\ref{SG}). The corresponding state process $\bar{%
X}(\cdot )$ satisfying (\ref{SVIE}) is called an optimal state process and ($%
\bar{X}(\cdot ),$ $\bar{u}(\cdot )$) is called an optimal 2-tuple of Problem
(\ref{SG}).

\section{Stochastic maximum principle and sufficient conditions for
optimality}

In this section, we derive a stochastic maximum principle for Problem \ref%
{SG}.

\subsection{Variational equation and Variational inequality}

Let ($\bar{X}(\cdot ),$ $\bar{u}(\cdot )$) be the optimal 2-tuple of Problem
(\ref{SG}). Take an arbitrary admissible control $u(\cdot )\in \mathcal{U}%
[0,T]$, set $v(\cdot )=u(\cdot )-\bar{u}(\cdot )$ and%
\begin{equation}
u^{\varepsilon }(\cdot )=\bar{u}(\cdot )+\varepsilon v(\cdot ),\text{ }0\leq
\varepsilon \leq 1.
\end{equation}%
Since $\mathcal{U}[0,T]$ is convex, we have $u^{\varepsilon }(\cdot )\in 
\mathcal{U}[0,T].$ Then, we denote by $X^{\varepsilon }(\cdot )$ the state
process of $G$-SVIE (\ref{SVIE}) corresponding to the control process $%
u^{\varepsilon }(\cdot )$. To simplify the statement of this paper, we defne
for each $(t,s)\in \Delta \lbrack 0,T]$:%
\begin{equation}
\begin{array}{cc}
\bar{l}(t,s)=\bar{l}(t,s,\bar{X}(s),\bar{u}(s)), & l^{\varepsilon
}(t,s)=l(t,s,X^{\varepsilon }(s),u^{\varepsilon }(s)); \\ 
\bar{l}_{x}(t,s)=\bar{l}_{x}(t,s,\bar{X}(s),\bar{u}(s)), & 
l_{x}^{\varepsilon }(t,s)=l_{x}(t,s,X^{\varepsilon }(s),u^{\varepsilon }(s));
\\ 
\bar{l}_{u}(t,s)=\bar{l}_{u}(t,s,\bar{X}(s),\bar{u}(s)), & 
l_{x}^{\varepsilon }(t,s)=l_{u}(t,s,X^{\varepsilon }(s),u^{\varepsilon }(s));%
\end{array}
\label{cofficient_def}
\end{equation}%
where $l=b,h^{ij},\sigma ^{i},g$ for each $1\leq i,j\leq d.$ For $l=g,$
since $g$ is independent of $t,$ we adopt the expression $\bar{g}(s)=\bar{g}%
(s,\bar{X}(s),\bar{u}(s))$. The partial derivatives $\bar{g}_{x},\bar{g}%
_{u},g_{x}^{\varepsilon },g_{u}^{\varepsilon }$ are defined analogously. In
this paper, we set 
\begin{equation*}
b_{x}\left( t,s\right) =\left[ 
\begin{array}{ccc}
b_{1x_{1}}\left( t,s\right) , & ..., & b_{1x_{n}}\left( t,s\right) \\ 
\vdots &  & \vdots \\ 
b_{nx_{1}}\left( t,s\right) , & ..., & b_{nx_{n}}\left( t,s\right)%
\end{array}%
\right] .
\end{equation*}%
Similarly, we can define the other partial derivatives.

With these notations established, we consider the following stochastic
Volterra integral equation driven by $G$-Brownian motion:%
\begin{equation}
\begin{array}{l@{}l}
X_{1}(t)= {}& \displaystyle\int_{0}^{t}[\bar{b}_{x}(t,s)X_{1}(s)+\bar{b}%
_{u}(t,s)v(s)]ds+\overset{d}{\underset{i,j=1}{\sum }}\int_{0}^{t}[\bar{h}%
_{x}^{ij}(t,s)X_{1}(s)+\bar{h}_{u}^{ij}(t,s)v(s)]d\left\langle
B^{i},B^{j}\right\rangle _{s} \\ 
{}& \displaystyle+\overset{d}{\underset{i=1}{\sum }}\int_{0}^{t}[\bar{\sigma}%
_{x}^{i}(t,s)X_{1}(s)+\bar{\sigma}_{u}^{i}(t,s)v(s)]dB_{s}^{i}.%
\end{array}
\label{Variation Eq}
\end{equation}%
This is called the variational equation for $G$-SVIE (\ref{SVIE}) and will
be used to derive the necessary conditions for optimality. Under assumptions
(H1)-(H3) and (H6), we derive from Theorem \ref{non_ujie} that Eq.(\ref%
{Variation Eq}) admits a unique solution $X_{1}(\cdot )\in \tilde{M}%
_{G}^{2}(0,T;\mathbb{R}^{n})$ and $\underset{t\in \lbrack 0,T]}{\sup }%
\mathbb{\hat{E}}\left[ \left\vert X_{1}\mathbb{(}t\mathbb{)}\right\vert ^{2}%
\right] <\infty .$ For each $t\in \lbrack 0,T],$ set 
\begin{equation}
\tilde{X}^{\varepsilon }(t)=\frac{X^{\varepsilon }(t)-\bar{X}(t)}{%
\varepsilon }-X_{1}(t).  \label{Xwan}
\end{equation}

\begin{theorem}
\label{Variation_th}Suppose that the assumptions (H1)-(H6) hold. Let $%
X_{1}(\cdot )$ be the unique adapted solution of (\ref{Variation Eq}). Then
we have 
\begin{equation*}
\underset{\varepsilon \rightarrow 0}{\lim }\underset{t\in \lbrack 0,T]}{\sup 
}\mathbb{\hat{E}}\left[ |\tilde{X}\mathbb{^{\varepsilon }(}t\mathbb{)}|^{2}%
\right] =0.
\end{equation*}
\end{theorem}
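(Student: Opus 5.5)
The approach is to write $\tilde{X}^{\varepsilon}$ itself as the solution of a linear $G$-SVIE, whose coefficients are the linearized ones evaluated along the segment between $(\bar X,\bar u)$ and $(X^{\varepsilon},u^{\varepsilon})$, and then apply the stability estimate of Proposition \ref{non_uguji2}. There are three steps.

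\textbf{Step 1 (a priori bound on $X^{\varepsilon}-\bar X$).} Apply Proposition \ref{non_uguji2} with data $(b,h,\sigma)$ evaluated at $u^{\varepsilon}$ and at $\bar u$. The Lipschitz bound in (H2) controls the difference of the coefficients at the common state by $\varepsilon|v(s)|$. This gives
\begin{equation*}
\sup_{t\in[0,T]}\hat{\mathbb{E}}\big[|X^{\varepsilon}(t)-\bar X(t)|^2\big]\le C\varepsilon^2\,\hat{\mathbb{E}}\Big[\int_0^T|v(s)|^2ds\Big],
\end{equation*}
so that $\sup_t\hat{\mathbb{E}}[|(X^{\varepsilon}(t)-\bar X(t))/\varepsilon|^2]$ is bounded uniformly in $\varepsilon$.

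\textbf{Step 2 (equation for $\tilde X^{\varepsilon}$).} Use the mean value formula: for $l=b,h^{ij},\sigma^i$,
\begin{equation*}
l^{\varepsilon}(t,s)-\bar l(t,s)=A_l^{\varepsilon}(t,s)\big(X^{\varepsilon}(s)-\bar X(s)\big)+D_l^{\varepsilon}(t,s)\,\varepsilon v(s),
\end{equation*}
where
\begin{equation*}
A_l^{\varepsilon}=\int_0^1 l_x\big(t,s,\bar X+\lambda(X^{\varepsilon}-\bar X),\bar u+\lambda\varepsilon v\big)d\lambda,
\end{equation*}
and $D_l^{\varepsilon}$ is defined in the same way with $l_u$ in place of $l_x$. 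Dividing by $\varepsilon$ and subtracting (\ref{Variation Eq}) shows that $\tilde X^{\varepsilon}$ solves the linear $G$-SVIE with kernel $A^{\varepsilon}_l$ acting on $\tilde X^{\varepsilon}$, plus the inhomogeneous terms
\begin{equation*}
(A_l^{\varepsilon}-\bar l_x)X_1+(D_l^{\varepsilon}-\bar l_u)v.
\end{equation*}
The coefficients $A_l^{\varepsilon}$ are uniformly bounded by (H2), and they inherit the $t$-continuity (H3) because $l_x$ satisfies (H3). Hence (A1)–(A3) hold, and one has to check that these coefficients lie in $M_G^2$; this is where the regularity assumptions (H1), (H3) and (H5) enter. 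The second estimate of Proposition \ref{non_uguji2}, applied with $\phi$ set to zero, then yields
\begin{equation*}
\sup_t\hat{\mathbb{E}}\big[|\tilde X^{\varepsilon}(t)|^2\big]\le C\sup_t\Big(\hat{\mathbb{E}}\Big[\Big(\int_0^T|\Gamma^{\varepsilon}_{b,h}(t,s)|ds\Big)^2\Big]+\hat{\mathbb{E}}\Big[\int_0^T|\Gamma^{\varepsilon}_{\sigma}(t,s)|^2ds\Big]\Big),
\end{equation*}
where
\begin{equation*}
\Gamma^{\varepsilon}_l(t,s)=(A_l^{\varepsilon}-\bar l_x)(t,s)X_1(s)+(D_l^{\varepsilon}-\bar l_u)(t,s)v(s).
\end{equation*}

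\textbf{Step 3 (main obstacle: showing the right-hand side tends to $0$ uniformly in $t$).} By (H5),
\begin{equation*}
|A_l^{\varepsilon}-\bar l_x|+|D_l^{\varepsilon}-\bar l_u|\le 2\varpi\big(|X^{\varepsilon}(s)-\bar X(s)|+\varepsilon|v(s)|\big),
\end{equation*}
and this bound is independent of $t$, which is what gives uniformity in $t$. In the $G$-framework dominated convergence is unavailable, so the convergence will be obtained by truncation instead. Since $\varpi$ may be taken bounded (the derivatives are bounded by (H2)), split on the event $\{|X_1(s)|+|v(s)|>N\}$. On that event the integrand is bounded by $C(|X_1|^2+|v|^2)\mathbf 1_{\{|X_1|+|v|>N\}}$, whose expectation is small for large $N$ uniformly in $\varepsilon$. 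The uniform integrability needed here is available for $v\in M_G^2$, and for $X_1$ through $\int_0^T\hat{\mathbb E}[|X_1(s)|^2\mathbf 1_{\{|X_1(s)|>N\}}]ds\to0$, which holds because $X_1\in M_G^2$. On the complementary event, for any $\delta>0$ the integrand is bounded by $CN^2\big(\varpi(\delta)^2+C\,\mathbf 1_{\{|X^{\varepsilon}-\bar X|+\varepsilon|v|>\delta\}}\big)$. The indicator term is controlled by Chebyshev's inequality together with Step 1, giving a bound of order $N^2\varepsilon^2/\delta^2$. Letting $\varepsilon\to0$, then $\delta\to0$, then $N\to\infty$ gives the claim. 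The delicate point, and the place I expect the real work, is justifying the truncation inside $\hat{\mathbb E}$ for processes that are only in $M_G^2$; for $X_1$ I would rely on the mean-square continuity of $X_1$ from Theorem \ref{non_ujie}.
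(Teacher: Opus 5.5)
Your proposal is correct and takes essentially the paper's route. Like the paper, you write $\tilde X^{\varepsilon}$ as the solution of a linear $G$-SVIE with averaged derivative kernels and remainders $(A_l^{\varepsilon}-\bar l_x)X_1+(D_l^{\varepsilon}-\bar l_u)v$, use the a priori estimate to reduce to showing these remainders vanish uniformly in $t$, and conclude by truncation using (H5), the uniform integrability of $M_G^2$ processes, and Chebyshev. The only difference is bookkeeping: the paper truncates on $\{|\tilde X^{\varepsilon}|+|X_1|+|v|\le N\}$ and uses the uniform bound (\ref{uni_bound}), so that $\varpi(N\varepsilon)$ appears directly, whereas you truncate on $\{|X_1|+|v|\le N\}$ and add a second threshold $\delta$ controlled by your Step 1 bound, which is equivalent since $X^{\varepsilon}-\bar X=\varepsilon(\tilde X^{\varepsilon}+X_1)$.
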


\begin{proof}
Without loss of generality, we only prove the result for the case $b=0$. The
extension to$\ b\neq 0$ is straightforward and will be omitted. From (\ref%
{SVIE}), (\ref{Variation Eq}) and (\ref{Xwan}), we have%
\begin{equation*}
\begin{array}{l@{}l}
\tilde{X}^{\varepsilon }(t)= {}& \displaystyle\overset{d}{\underset{i,j=1}{%
\sum }}\int_{0}^{t}\left[ \varepsilon ^{-1}\left( h^{ij,\varepsilon }(t,s)-%
\bar{h}^{ij}(t,s)\right) -\bar{h}_{x}^{ij}(t,s)X_{1}(s)-\bar{h}%
_{u}^{ij}(t,s)v(s)\right] d\left\langle B^{i},B^{j}\right\rangle _{s} \\ 
{}& +\displaystyle\overset{d}{\underset{i=1}{\sum }}\int_{0}^{t}\left[
\varepsilon ^{-1}\left( \sigma ^{i,\varepsilon }(t,s)-\bar{\sigma}%
^{i}(t,s)\right) -\bar{\sigma}_{x}^{i}(t,s)X_{1}(s)-\bar{\sigma}%
_{u}^{i}(t,s)v(s)\right] dB_{s}^{i},%
\end{array}%
\end{equation*}%
where the generator is defined in (\ref{cofficient_def}). Note that%
\begin{equation*}
\begin{array}{ll}
\varepsilon ^{-1}\left( l^{\varepsilon }(t,s)-\bar{l}(t,s)\right) = %
\displaystyle\int_{0}^{1}l_{x}^{\theta }(t,s)d\theta \frac{X^{\varepsilon
}(s)-\bar{X}(s)}{\varepsilon }+\displaystyle\int_{0}^{1}l_{u}^{\theta
}(t,s)d\theta v(s),%
\end{array}%
\end{equation*}%
where $l_{x}^{\theta }(t,s)=l_{x}(t,s,\bar{X}(s)+\theta \left(
X^{\varepsilon }(s)-\bar{X}(s)\right) ,\bar{u}(s)+\theta \varepsilon v(s))$, 
$l_{u}^{\theta }(t,s)=l_{u}(t,s,\bar{X}(s)+\theta (X^{\varepsilon }(s)-\bar{X%
}(s)),\bar{u}(s)+\theta \varepsilon v(s)),$ and $l=\sigma ^{i},h^{ij}.$
Since $\frac{X^{\varepsilon }(s)-\bar{X}(s)}{\varepsilon }=\tilde{X}%
^{\varepsilon }(s)+X_{1}\left( s\right) ,$ we obtain%
\begin{equation}
\begin{array}{l@{}l}
\tilde{X}^{\varepsilon }(t)= {}& \displaystyle\overset{d}{\underset{i,j=1}{%
\sum }}\int_{0}^{t}\biggl[\int_{0}^{1}h_{x}^{ij,\theta }(t,s)d\theta \tilde{X%
}^{\varepsilon }(s)+R_{h^{ij}}^{\theta }(t,s))\biggr]d\left\langle
B^{i},B^{j}\right\rangle _{s} \\ 
{}& +\displaystyle\overset{d}{\underset{i=1}{\sum }}\int_{0}^{t}\left[
\int_{0}^{1}\sigma _{x}^{i,\theta }(t,s)d\theta \tilde{X}^{\varepsilon
}(s)+R_{\sigma ^{i}}^{\theta }(t,s)\right] dB_{s}^{i},%
\end{array}
\label{Var_1}
\end{equation}%
where%
\begin{equation*}
\begin{array}{c}
\displaystyle R_{h^{ij}}^{\theta }(t,s)=\int_{0}^{1}\left( h_{x}^{ij,\theta
}(t,s)-\bar{h}_{x}^{ij}(t,s)\right) d\theta X_{1}(s)+\int_{0}^{1}\left(
h_{u}^{ij,\theta }(t,s)-\bar{h}_{u}^{ij}(t,s)\right) d\theta v(s)%
\end{array}%
\end{equation*}%
and%
\begin{equation*}
\begin{array}{c}
\displaystyle R_{\sigma ^{i}}^{\theta }(t,s)=\int_{0}^{1}(\sigma
_{x}^{i,\theta }(t,s)-\bar{\sigma}_{x}^{i}(t,s))d\theta
X_{1}(s)+\int_{0}^{1}\left( \sigma _{u}^{i,\theta }(t,s)-\bar{\sigma}%
_{u}^{i}(t,s)\right) d\theta v(s).%
\end{array}%
\end{equation*}%
By Proposition \ref{non_uguji1} and (H2), we derive%
\begin{equation}
\begin{array}{l@{}l}
\underset{t\in \lbrack 0,T]}{\sup }\mathbb{\hat{E}}\bigl[|\tilde{X}\mathbb{%
^{\varepsilon }(}t\mathbb{)}|^{2}\bigr] &{} \leq \displaystyle C\underset{t\in
\lbrack 0,T]}{\sup }\Biggl\{\hat{\mathbb{E}}\biggl[\biggl(\int_{0}^{T}%
\overset{d}{\underset{i,j=1}{\sum }}|R_{h^{ij}}^{\theta }(t,s)|ds\biggr)^{2}%
\biggr]+\overset{d}{\underset{i=1}{\sum }}\hat{\mathbb{E}}\biggl[%
\int_{0}^{T}|R_{\sigma ^{i}}^{\theta }(t,s)|^{2}ds\biggr]\Biggr\} \\ 
& {}\leq \displaystyle C\Biggl\{\underset{t\in \lbrack 0,T]}{\sup }\hat{%
\mathbb{E}}[|X_{1}(t)|^{2}]+\hat{\mathbb{E}}\biggl[\int_{0}^{T}v^{2}(s)ds%
\biggr]\Biggr\}<\infty ,%
\end{array}
\label{uni_bound}
\end{equation}%
where $C$ depends on $\bar{\sigma},L,T,d.$

Then, it suffices to prove $\underset{\varepsilon \rightarrow 0}{\lim }%
\underset{t\in \lbrack 0,T]}{\sup }\biggl\{\hat{\mathbb{E}}\Bigl[\left(
\int_{0}^{T}|R_{h^{ij}}^{\theta }(t,s)|ds\right) ^{2}\Bigr]+\hat{\mathbb{E}}%
\left[ \int_{0}^{T}|R_{\sigma ^{i}}^{\theta }(t,s)|^{2}ds\right] \biggr\}=0.$
For each $N\in \mathbb{N},$ define%
\begin{equation*}
\begin{array}{c}
A_{N}(s)=\left\{ |\tilde{X}\mathbb{^{\varepsilon }}(s)|+|X_{1}(s)|+|v(s)|%
\leq N\right\} .%
\end{array}%
\end{equation*}%
Since the derivatives of $\sigma ^{i}$ with respect to $(x,u)$ is bounded,
it is easy to verify that 
\begin{equation*}
\begin{array}{c}
|R_{\sigma ^{i}}^{\theta }(t,s)|\leq C\left\vert X_{1}(s)+v(s)\right\vert .%
\end{array}%
\end{equation*}%
Recall that $X^{\varepsilon }(t)-\bar{X}(t)=\varepsilon \bigl(\tilde{X}%
^{\varepsilon }(t)+X_{1}(t)\bigr)$. Combining this with assumption (H5), we
further obtain 
\begin{equation*}
\begin{array}{c}
|R_{\sigma ^{i}}^{\theta }(t,s)|\leq \displaystyle\int_{0}^{1}\bar{w}\Bigl(%
\theta \varepsilon \bigl\vert\tilde{X}^{\varepsilon }(s)+X_{1}(s)\bigr\vert%
+\theta \varepsilon \left\vert v(s)\right\vert \Bigr)d\theta \left\vert
X_{1}(s)+v(s)\right\vert \\ 
\text{ \ \ }\leq \bar{w}\Bigl(\varepsilon \bigl\vert\tilde{X}^{\varepsilon
}(s)+X_{1}(s)\bigr\vert+\varepsilon |v(s)|\Bigr)\bigl\vert X_{1}(s)+v(s)%
\bigr\vert.%
\end{array}%
\end{equation*}%
Accordingly, by (H2) we have%
\begin{equation}
\begin{array}{l}
\displaystyle\underset{t\in \lbrack 0,T]}{\sup }\hat{\mathbb{E}}\biggl[%
\int_{0}^{T}|R_{\sigma ^{i}}^{\theta }(t,s)|^{2}ds\biggr] \\ 
=\displaystyle\underset{t\in \lbrack 0,T]}{\sup }\hat{\mathbb{E}}\biggl[%
\int_{0}^{T}|R_{\sigma ^{i}}^{\theta
}(t,s)|^{2}(I_{A_{N}(s)}+I_{A_{N}(s)}^{c})ds\biggr] \\ 
\leq \displaystyle\hat{\mathbb{E}}\biggl[\int_{0}^{T}\Bigl\vert\bar{w}\Bigl(%
\varepsilon \bigl\vert\tilde{X}^{\varepsilon }(s)+X_{1}(s)\bigr\vert%
+\varepsilon \bigr\vert v(s)\bigr\vert\Bigr)^{2}\Bigr\vert\left\vert
X_{1}(s)+v(s)\right\vert ^{2}I_{A_{N}(s)}ds\biggr] \\ 
\text{ \ }\displaystyle+C\hat{\mathbb{E}}\biggl[\int_{0}^{T}\left\vert
X_{1}(s)+v(s)\right\vert ^{2}I_{A_{N}(s)}^{c}ds\biggr] \\ 
\leq \displaystyle\left\vert \bar{w}\left( N\varepsilon \right) \right\vert
^{2}\hat{\mathbb{E}}\biggl[\int_{0}^{T}\left\vert X_{1}(s)+v(s)\right\vert
^{2}ds\biggr]+C\hat{\mathbb{E}}\biggl[\int_{0}^{T}\left\vert
X_{1}(s)+v(s)\right\vert ^{2}I_{A_{N}(s)}^{c}ds\biggr].%
\end{array}
\label{Var_2}
\end{equation}%
Since $X_{1}(\cdot )+v(\cdot )\in M_{G}^{2}(0,T),$ we obtain from
Theorem 4.7 in \cite{Hu2016Qc} that there exists $M_{0}\in \mathbb{N}$ such that for
each $M\geq M_{0}$ and $\delta >0,$%
\begin{equation}
\begin{array}{c}
\displaystyle\hat{\mathbb{E}}\biggl[\int_{0}^{T}\left\vert
X_{1}(s)+v(s)\right\vert ^{2}I_{\left\{ \left\vert X_{1}(s)+v(s)\right\vert
>M_{0}\right\} }ds\biggr]<\frac{\delta }{4}.%
\end{array}
\label{Var_3}
\end{equation}%
Furthermore, by (\ref{uni_bound}), it is easy to check that there exists $%
N_{0}\in \mathbb{N}$ such that for each $N\geq N_{0},$%
\begin{equation}
\begin{array}{c}
\displaystyle M_{0}^{2}\hat{\mathbb{E}}\biggl[\int_{0}^{T}I_{A_{N}(s)}^{c}ds%
\biggr]\leq M_{0}^{2}\frac{3}{N^{2}}\hat{\mathbb{E}}\biggl[\int_{0}^{T}|%
\tilde{X}\mathbb{^{\varepsilon }}(s)|^{2}+|X_{1}(s)|^{2}+|v(s)|^{2}ds\biggr]%
\leq \frac{3M_{0}^{2}C}{N^{2}}<\frac{\delta }{4}.%
\end{array}
\label{Var_4}
\end{equation}%
From (\ref{Var_3}) and (\ref{Var_4}), we deduce%
\begin{equation}
\begin{array}{l}
\displaystyle\hat{\mathbb{E}}\biggl[\int_{0}^{T}\left\vert
X_{1}(s)+v(s)\right\vert ^{2}I_{A_{N_{0}}(s)}^{c}ds\biggr] \\ 
\displaystyle\leq \hat{\mathbb{E}}\biggl[\int_{0}^{T}\left\vert
X_{1}(s)+v(s)\right\vert ^{2}I_{\left\{ \left\vert X_{1}(s)+v(s)\right\vert
>M_{0}\right\} }I_{A_{N_{0}}(s)}^{c}ds\biggr]+\hat{\mathbb{E}}\biggl[%
\int_{0}^{T}M_{0}^{2}I_{A_{N_{0}}(s)}^{c}ds\biggr]<\frac{\delta }{2}.%
\end{array}
\label{Var_5}
\end{equation}%
Set $N=N_{0}$ in (\ref{Var_2}). Then by (H5), it holds that there exists a
constant $\tilde{\delta}>0$ such that for each $\tilde{\delta}>\varepsilon
>0,$ 
\begin{equation}
\begin{array}{c}
\displaystyle\left\vert \bar{w}\left( N_{0}\varepsilon \right) \right\vert
^{2}\hat{\mathbb{E}}\biggl[\int_{0}^{T}\left\vert X_{1}(s)+v(s)\right\vert
^{2}ds\biggr]<\frac{\delta }{2}.%
\end{array}
\label{Var_6}
\end{equation}%
Since $\delta $ is arbitrary, we conclude from (\ref{Var_2}),(\ref{Var_5})
and (\ref{Var_6}) that $\underset{\varepsilon \rightarrow 0}{\lim }\underset{%
t\in \lbrack 0,T]}{\sup }\hat{\mathbb{E}}\left[ \int_{0}^{T}|R_{\sigma
^{i}}^{\theta }(t,s)|^{2}ds\right] =0.$ By a similar argument, we also have $%
\underset{\varepsilon \rightarrow 0}{\lim }\underset{t\in \lbrack 0,T]}{\sup 
}\hat{\mathbb{E}}\Bigl[\left( \int_{0}^{T}|R_{h^{ij}}^{\theta
}(t,s)|ds\right) ^{2}\Bigr]=0$. The proof is complete.
\end{proof}

Denote 
\begin{equation}
\begin{array}{c}
\displaystyle\Gamma \left( X,u\right) =\mathbb{\varphi }(X(T))+%
\int_{0}^{T}g(s,X(s),u(s))ds%
\end{array}
\label{Def_J}
\end{equation}%
and 
\begin{equation}
\begin{array}{c}
\displaystyle\Psi \left( u\right) =\mathbb{\varphi }_{x}(\bar{X}%
(T))X_{1}(T)+\int_{0}^{T}\left[ \bar{g}_{x}(s)X_{1}(s)+\bar{g}_{u}(s)\left(
u(s)-\bar{u}(s)\right) \right] ds.%
\end{array}%
\end{equation}%
For each $u(\cdot )\in \mathcal{U}[0,T],$ set%
\begin{equation}
\mathcal{\tilde{P}}\left( X,u\right) \mathcal{=}\left\{ P\in \mathcal{P}%
\text{ }|\text{ }E_{P}\mathbb{[}\Gamma \left( X,u\right) ]\mathbb{=\hat{E}[}%
\Gamma \left( X,u\right) \mathbb{]}\right\} .  \label{set_of_p}
\end{equation}%
We first present the following lemmas, which play an important role in the
rest of this section.

\begin{lemma}
\label{weakness}For each $u(\cdot )\in \mathcal{U}[0,T]$ and $X(\cdot )\in 
\tilde{M}_{G}^{2}(0,T;\mathbb{R}^{n}),$ the family of probability measures $%
\mathcal{\tilde{P}}\left( X,u\right) $ is convex and weakly compact.

\begin{proof}
Since the expectation $E_{P}$ is linear in the probability measure $P$, the
convexity of $\mathcal{\tilde{P}}\left( X,u\right) $ follows directly.
Recall that the family of probability measures $\mathcal{P}$ is weakly
compact, it remains to show that $\mathcal{\tilde{P}}\left( X,u\right) $ is
a weakly closed subset of $\mathcal{P}$. Assume that the set of probability
measures $\left\{ P_{n}\right\} _{n\geq 1}\subset \mathcal{\tilde{P}}\left(
X,u\right) \subset \mathcal{P}$ converge weekly to $P.$ Then by Proposition %
\ref{weekconverge}, we have $P\in \mathcal{P}$ and 
\begin{equation}
\begin{array}{c}
E_{P_{n}}[\xi ]\rightarrow E_{P}[\xi ],\text{ for each }\xi \in
L_{G}^{1}\left( \Omega _{T}\right).%
\end{array}
\label{week_1}
\end{equation}
For each $n\geq 1$, since $E_{P_{n}}[\Gamma \left( X,u\right) ]=\mathbb{\hat{%
E}[}\Gamma \left( X,u\right) \mathbb{]}$ and $\Gamma \left( X,u\right) \in
L_{G}^{2}(\Omega _{T}),$ we derive from (\ref{week_1}) that $E_{P}[\Gamma
\left( X,u\right) ]=\mathbb{\hat{E}[}\Gamma \left( X,u\right) \mathbb{]},$
which implies that $P\in \mathcal{\tilde{P}}\left( X,u\right) .$ The proof
is complete.
\end{proof}
\end{lemma}

The next result presents the first-order variational
expansion of $\Gamma \left( X,u\right) $
under the convex control perturbations.

\begin{lemma}
\label{Taylor}Assume that (H1)-(H6) hold. Then we have 
\begin{equation}
\begin{array}{c}
\Gamma \left( X^{\varepsilon },u^{\varepsilon }\right) =\Gamma \left( \bar{X}%
,\bar{u}\right) +\varepsilon \Psi \left( u\right) +\mathcal{R}(\varepsilon )%
\end{array}%
\end{equation}%
and $\hat{\mathbb{E}}\left[ \left\vert \mathcal{R}(\varepsilon )\right\vert %
\right] =o\left( \varepsilon \right) $, where%
\begin{equation*}
\begin{array}{l@{}l}
\displaystyle\mathcal{R}(\varepsilon )= {}& \displaystyle\varepsilon
\int_{0}^{1}\left[ \mathbb{\varphi }_{x}\left( \theta ,\varepsilon \right) -%
\mathbb{\varphi }_{x}(\bar{X}(T))\right] d\theta X_{1}(T)+\varepsilon
\int_{0}^{T}\int_{0}^{1}\left[ g_{x}\left( \theta ,s\right) -g_{x}(s)\right]
d\theta X_{1}(s)ds \\ 
{}& \displaystyle+\varepsilon \int_{0}^{T}\int_{0}^{1}\left[ g_{u}\left(
\theta ,s\right) -g_{u}(s)\right] d\theta v(s)ds+\varepsilon \int_{0}^{1}%
\mathbb{\varphi }_{x}\left( \theta ,\varepsilon \right) d\theta \tilde{X}%
\mathbb{^{\varepsilon }}\left( T\right) +\varepsilon
\int_{0}^{T}\int_{0}^{1}g_{x}\left( \theta ,s\right) d\theta \tilde{X}%
\mathbb{^{\varepsilon }}\left( s\right) ds,%
\end{array}%
\end{equation*}%
\begin{equation*}
\begin{array}{c}
l\left( \theta ,t\right) =l\Bigl(t,\bar{X}(t)+\theta \varepsilon \bigl(%
X_{1}(t)+\tilde{X}\mathbb{^{\varepsilon }}\left( t\right) \bigr),\bar{u}%
(t)+\theta \varepsilon v\left( t\right) \Bigr),\text{ }l=g_{x},g_{u},%
\end{array}%
\end{equation*}%
\begin{equation*}
\begin{array}{c}
\mathbb{\varphi }_{x}\left( \theta ,\varepsilon \right) =\varphi _{x}\Bigl(%
\bar{X}(T)+\theta \varepsilon \bigl(X_{1}(T)+\tilde{X}\mathbb{^{\varepsilon }%
}\left( T\right) \bigr)\Bigr).%
\end{array}%
\end{equation*}
\end{lemma}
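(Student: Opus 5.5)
The identity itself is a first-order Taylor expansion with integral remainder; the real content is the estimate $\hat{\mathbb{E}}[|\mathcal{R}(\varepsilon)|]=o(\varepsilon)$.

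\textbf{Step 1 (the identity).} Write $X^{\varepsilon}-\bar X=\varepsilon(X_1+\tilde X^{\varepsilon})$ by (\ref{Xwan}) and $u^{\varepsilon}-\bar u=\varepsilon v$. Apply the mean value formula in integral form, which is justified by (H2):
\[
\varphi(X^{\varepsilon}(T))-\varphi(\bar X(T))=\varepsilon\int_0^1\varphi_x(\theta,\varepsilon)d\theta\,\bigl(X_1(T)+\tilde X^{\varepsilon}(T)\bigr),
\]
and similarly for $g$ with $g_x(\theta,s)$ and $g_u(\theta,s)$. Adding and subtracting $\varphi_x(\bar X(T))X_1(T)$, $\bar g_x(s)X_1(s)$ and $\bar g_u(s)v(s)$ yields $\varepsilon\Psi(u)$. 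What is left over is exactly the five terms of $\mathcal{R}(\varepsilon)$, so the identity is purely algebraic.

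\textbf{Step 2 (terms containing $\tilde X^{\varepsilon}$).} By (H4), $|\varphi_x(\theta,\varepsilon)|\le C(1+|\bar X(T)|+|X_1(T)|+|\tilde X^{\varepsilon}(T)|)$, and $g_x(\theta,s)$ obeys the analogous bound with $|\bar u(s)|+|v(s)|$ added. The quantities $\sup_t\hat{\mathbb{E}}[|\bar X(t)|^2]$, $\sup_t\hat{\mathbb{E}}[|X_1(t)|^2]$ and $\sup_t\hat{\mathbb{E}}[|\tilde X^{\varepsilon}(t)|^2]$ are finite, uniformly in $\varepsilon\in(0,1]$, by Proposition \ref{3.2} and (\ref{uni_bound}), and $u,v\in M_G^2$. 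Cauchy--Schwarz under $\hat{\mathbb{E}}$ (sublinearity) then gives
\[
\hat{\mathbb{E}}\Bigl[\Bigl|\varepsilon\int_0^1\varphi_x(\theta,\varepsilon)d\theta\,\tilde X^{\varepsilon}(T)\Bigr|\Bigr]\le C\varepsilon\,\hat{\mathbb{E}}[|\tilde X^{\varepsilon}(T)|^2]^{1/2}=o(\varepsilon)
\]
by Theorem \ref{Variation_th}. The $g_x\tilde X^{\varepsilon}$ term is handled the same way, using $\hat{\mathbb{E}}[\int_0^T|\tilde X^{\varepsilon}(s)|^2ds]\le T\sup_t\hat{\mathbb{E}}[|\tilde X^{\varepsilon}(t)|^2]$.

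\textbf{Step 3 (difference-of-derivative terms).} This is the main obstacle. The increments such as $\varphi_x(\theta,\varepsilon)-\varphi_x(\bar X(T))$ are bounded by $\varpi(\varepsilon|X_1+\tilde X^{\varepsilon}|+\varepsilon|v|)$ by (H5). Since dominated convergence fails under $\hat{\mathbb{E}}$, I would repeat the truncation argument from the proof of Theorem \ref{Variation_th}.

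Introduce the set $A_N=\{|X_1(T)|+|\tilde X^{\varepsilon}(T)|\le N\}$ for the $\varphi$ term, and $A_N(s)=\{|X_1(s)|+|\tilde X^{\varepsilon}(s)|+|v(s)|\le N\}$ for the $g$ terms. On $A_N$ the integrand is at most $\varpi(2N\varepsilon)|X_1(T)|$, whose $\hat{\mathbb{E}}$ tends to $0$ as $\varepsilon\to0$ with $N$ fixed.

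On $A_N^c$, use (H4) to bound the difference by $C(1+|\bar X|+|X_1|+|\tilde X^{\varepsilon}|+|\bar u|+|v|)$. Cauchy--Schwarz then gives a product of a uniformly bounded $L^2_G$ norm and $c(A_N^c)^{1/2}$, or $\hat{\mathbb{E}}[\int_0^T I_{A_N(s)^c}ds]^{1/2}$ for the time integrals. By Chebyshev and the uniform bounds, $c(A_N^c)\le C/N^2$ independently of $\varepsilon$. So the $A_N^c$ part can be made small uniformly in $\varepsilon$ by choosing $N$ large, and this bypasses the need for uniform integrability of $X_1$. One could alternatively control it via Theorem 4.7 of \cite{Hu2016Qc}, as in (\ref{Var_3}).

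Fix $N$ large first, then let $\varepsilon\to0$. This shows each difference term is $\varepsilon\cdot o(1)$, and summing the five terms gives $\hat{\mathbb{E}}[|\mathcal{R}(\varepsilon)|]=o(\varepsilon)$.
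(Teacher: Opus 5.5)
Steps 1 and 2 coincide with the paper's argument. The gap is in the $A_N^c$ part of Step 3. There the integrand is not the derivative increment alone; it is the increment multiplied by $X_1(T)$ (or by $X_1(s)$, $v(s)$ for the $g$-terms). After bounding the increment with (H4) you are left with $C\bigl(1+|\bar X(T)|+|X_1(T)|+|\tilde X^{\varepsilon}(T)|\bigr)|X_1(T)|I_{A_N^c}$, a product of two factors that are only square integrable.

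To make $c(A_N^c)^{1/2}$ appear through Cauchy--Schwarz you would need $\hat{\mathbb{E}}\bigl[(1+|\bar X(T)|+\cdots)^2|X_1(T)|^2\bigr]<\infty$. That is essentially a fourth-moment bound, and (H1)--(H6) do not give it. The Cauchy--Schwarz split that actually holds is the paper's:
\[
\hat{\mathbb{E}}\bigl[(1+|\bar X(T)|+|X_1(T)|+|\tilde X^{\varepsilon}(T)|)^2\bigr]^{1/2}\,\hat{\mathbb{E}}\bigl[|X_1(T)|^2 I_{A_N^c}\bigr]^{1/2}.
\]
To make the second factor small uniformly in $\varepsilon$ you need $\hat{\mathbb{E}}[|X_1(T)|^2I_{A_N^c}]\le \hat{\mathbb{E}}[|X_1(T)|^2I_{\{|X_1(T)|>M\}}]+M^2c(A_N^c)$, together with uniform integrability of $|X_1(T)|^2$. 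Under $\hat{\mathbb{E}}$ this is not free. It holds because $X_1(T)\in L^2_G(\Omega_T)$ (Theorem 52 of \cite{Denis2011function}), and for the time integrals because $X_1,v\in M_G^2$ (Theorem 4.7 of \cite{Hu2016Qc}). So your claim that the truncation ``bypasses the need for uniform integrability of $X_1$'' is false for the argument you wrote. The fallback you mention only in passing is the ingredient that is actually needed, and it is what the paper uses.

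If you want to avoid uniform integrability, bound the increment on $A_N^c$ with (H5) instead of (H4). Since $\mathbb{R}^n\times U$ is convex, you can split a segment into pieces of length at most $1$ and use a nondecreasing modulus to get $|l(x_1,u_1)-l(x_2,u_2)|\le\varpi(1)\bigl(1+|x_1-x_2|+|u_1-u_2|\bigr)$. The increment is then at most $\varpi(1)\bigl(1+\varepsilon|X_1+\tilde X^{\varepsilon}|+\varepsilon|v|\bigr)$. The $A_N^c$ part of the $\varphi$-term is therefore bounded by $\varpi(1)\hat{\mathbb{E}}[|X_1(T)|^2]^{1/2}c(A_N^c)^{1/2}+C\varepsilon$, which is $O(1/N)+O(\varepsilon)$. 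The $g$-terms work the same way with $\hat{\mathbb{E}}[\int_0^T I_{A_N(s)^c}ds]$ in place of $c(A_N^c)$. What breaks your version is the order-one factor $|\bar X(T)|$ (and $|\bar u|$) that (H4) introduces.
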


\begin{proof}
Note that 
\begin{equation}
\begin{array}{c}
\displaystyle\Gamma \left( X^{\varepsilon },u^{\varepsilon }\right) -\Gamma
\left( \bar{X},\bar{u}\right) =\mathbb{\varphi }(X^{\varepsilon }(T))-%
\mathbb{\varphi }(\bar{X}(T))+\int_{0}^{T}\left[ g^{\varepsilon }(s)-\bar{g}%
(s)\right] ds,%
\end{array}%
\end{equation}%
where $g^{\varepsilon }(s)$ and $\bar{g}(s)$ are defined in (\ref%
{cofficient_def}). By simple calculation, we first analyze the term
involving $\varphi $:%
\begin{equation}
\begin{array}{l}
\displaystyle\mathbb{\varphi }(X^{\varepsilon }(T))-\mathbb{\varphi }(\bar{X}%
(T)) \\ 
\displaystyle=\int_{0}^{1}\mathbb{\varphi }_{x}\left( \theta ,\varepsilon
\right) )d\theta \left( X^{\varepsilon }(T)-\bar{X}(T)\right) \\ 
\displaystyle=\varepsilon \int_{0}^{1}\mathbb{\varphi }_{x}\left( \theta
,\varepsilon \right) d\theta \bigl(X_{1}(T)+\tilde{X}\mathbb{^{\varepsilon }}%
\left( T\right) \bigr) \\ 
\displaystyle=\varepsilon \mathbb{\varphi }_{x}(\bar{X}(T))X_{1}(T)+%
\varepsilon \int_{0}^{1}\left[ \mathbb{\varphi }_{x}\left( \theta
,\varepsilon \right) -\mathbb{\varphi }_{x}(\bar{X}(T))\right] d\theta
X_{1}(T)+\varepsilon \int_{0}^{1}\mathbb{\varphi }_{x}\left( \theta
,\varepsilon \right) d\theta \tilde{X}\mathbb{^{\varepsilon }}\left(
T\right) ,%
\end{array}
\label{taylor_1}
\end{equation}%
where $\mathbb{\varphi }_{x}\left( \theta ,\varepsilon \right) $ is defined
as before. We now estimate the last two terms in the above formula. For each 
$N\in \mathbb{N},$ define%
\begin{equation*}
\begin{array}{c}
A_{N}=\left\{ |\tilde{X}\mathbb{^{\varepsilon }}(T)|+|X_{1}(T)|\leq
N\right\} .%
\end{array}%
\end{equation*}%
From (\ref{uni_bound}) and Proposition \ref{3.2}, we derive from (H4) and
(H5) that%
\begin{equation*}
\begin{array}{l}
\displaystyle\hat{\mathbb{E}}\biggl[\biggl\vert\int_{0}^{1}\left[ \mathbb{%
\varphi }_{x}\left( \theta ,\varepsilon \right) -\mathbb{\varphi }_{x}(\bar{X%
}(T))\right] d\theta X_{1}(T)\biggr\vert\biggr] \\ 
\leq \hat{\mathbb{E}}\left[ \left\vert \bar{w}\Bigl(\varepsilon \bigl\vert%
\tilde{X}\mathbb{^{\varepsilon }}(T)+X_{1}(T)\bigr\vert\Bigr)%
X_{1}(T)I_{A_{N}}\right\vert \right] +\hat{\mathbb{E}}\left[ \left\vert
\left( 1+|\bar{X}(T)|+|\tilde{X}\mathbb{^{\varepsilon }}(T)%
|+| X_{1}(T)| \right)
X_{1}(T)I_{A_{N}^{c}}\right\vert \right] \\ 
\leq \left\vert \bar{w}\left( \varepsilon N\right) \right\vert \hat{\mathbb{E%
}}\bigl[\left\vert X_{1}(T)\right\vert ^{2}\bigr]^{1/2}+\hat{\mathbb{E}}%
\Bigl[\Bigl(1+|\bar{X}(T)|+|\tilde{X}\mathbb{^{\varepsilon }}(T)%
|+| X_{1}(T)| \Bigr)^{2} \Bigr]^{1/2}\hat{\mathbb{E}%
}\bigl[\left\vert X_{1}(T)\right\vert ^{2}I_{A_{N}^{c}}\bigr]^{1/2} \\ 
\leq C\Bigl\{\left\vert \bar{w}\left( \varepsilon N\right) \right\vert +\hat{%
\mathbb{E}}\bigl[\left\vert X_{1}(T)\right\vert ^{2}I_{A_{N}^{c}}\bigr]^{1/2}%
\Bigr\}.%
\end{array}%
\end{equation*}%
Following similar arguments as in (\ref{Var_3})-(\ref{Var_6}), we obtain
from Theorem 52 in \cite{Denis2011function}that 
\begin{equation}
\underset{\varepsilon \rightarrow 0}{\lim }\hat{\mathbb{E}}\biggl[\biggl\vert%
\int_{0}^{1}\left[ \mathbb{\varphi }_{x}\left( \theta ,\varepsilon \right) -%
\mathbb{\varphi }_{x}(\bar{X}(T))\right] d\theta X_{1}(T)\biggr\vert\biggr]%
=0.  \label{taylor_5}
\end{equation}%
By H$\ddot{\mathrm{o}}$lder's inequality and Proposition \ref{Variation_th}, we have%
\begin{equation*}
\begin{array}{c}
\displaystyle\hat{\mathbb{E}}\left[ \left\vert \int_{0}^{1}\mathbb{\varphi }%
_{x}\left( \theta ,\varepsilon \right) d\theta \tilde{X}\mathbb{%
^{\varepsilon }}\left( T\right) \right\vert \right] \leq \hat{\mathbb{E}}%
\bigl[\bigl\vert\tilde{X}\mathbb{^{\varepsilon }}\left( T\right) \bigr\vert%
^{2}\bigr]^{1/2}\hat{\mathbb{E}}\Bigl[\Bigl(1+\left\vert \bar{X}%
(T)\right\vert +\left\vert X_{1}(T)\right\vert +\bigl\vert\tilde{X}\mathbb{%
^{\varepsilon }}\left( T\right) \bigr\vert\Bigr)^{2}\Bigr]^{1/2},%
\end{array}%
\end{equation*}%
which implies that 
\begin{equation}
\begin{array}{c}
\displaystyle\underset{\varepsilon \rightarrow 0}{\lim }\hat{\mathbb{E}}%
\left[ \left\vert \int_{0}^{1}\mathbb{\varphi }_{x}\left( \theta
,\varepsilon \right) d\theta \tilde{X}\mathbb{^{\varepsilon }}\left(
T\right) \right\vert \right] =0.%
\end{array}
\label{taylor_2}
\end{equation}%
Then, by (\ref{taylor_2}) and (\ref{taylor_5}), we conclude that 
\begin{equation}
\underset{\varepsilon \rightarrow 0}{\lim }\hat{\mathbb{E}}\left[ \left\vert
\int_{0}^{1}\left[ \mathbb{\varphi }_{x}\left( \theta ,\varepsilon \right) -%
\mathbb{\varphi }_{x}(\bar{X}(T))\right] d\theta X_{1}(T)+\int_{0}^{1}%
\mathbb{\varphi }_{x}\left( \theta ,\varepsilon \right) d\theta \tilde{X}%
\mathbb{^{\varepsilon }}\left( T\right) \right\vert \right] =0.
\label{taylor_6}
\end{equation}

Next, we consider the integral term of $g$. By a similar Taylor expansion,
we obtain%
\begin{equation}
\displaystyle\int_{0}^{T}\left[ g^{\varepsilon }(s)-\bar{g}(s)\right]
ds=\varepsilon \int_{0}^{T}\left[ \bar{g}_{x}(s)X_{1}(s)+\bar{g}_{u}(s)v(s)%
\right] ds+\varepsilon \tilde{R}_{g}\left( \varepsilon \right) ,
\label{taylor_3}
\end{equation}%
where%
\begin{equation*}
\begin{array}{l@{}l}
\tilde{R}_{g}\left( \varepsilon \right) ={}& \displaystyle\int_{0}^{T}%
\int_{0}^{1}\left[ g_{x}\left( \theta ,s\right) -\bar{g}_{x}(s)\right]
d\theta X_{1}(s)ds+\int_{0}^{T}\int_{0}^{1}\left[ g_{u}\left( \theta
,s\right) -\bar{g}_{u}(s)\right] d\theta v(s)ds \\ 
& {} \displaystyle+\int_{0}^{T}\int_{0}^{1}g_{x}\left( \theta
,s\right) d\theta \tilde{X}\mathbb{^{\varepsilon }}\left( s\right) ds.%
\end{array}%
\end{equation*}%
Here $g_{x}\left( \theta ,s\right) $ and $g_{u}\left( \theta ,s\right) $
follow the definitions stated previously. By analogous arguments to those in
in (\ref{Var_2}), one can easily derive from (H4) that%
\begin{equation}
\underset{\varepsilon \rightarrow 0}{\lim }\hat{\mathbb{E}}\bigl[\bigl\vert%
\tilde{R}_{g}\left( \varepsilon \right) \bigr\vert\bigr]=0.  \label{taylor_4}
\end{equation}%
Combining with (\ref{taylor_6}) and (\ref{taylor_4}), the proof is complete.
\end{proof}

To obtain the stochastic maximum principle, we need to establish the Variational inequality$.$
The following is the main result.

\begin{theorem}
\label{ineq_Var}Assume that (H1)-(H6) hold. Then, for each $u(\cdot )\in 
\mathcal{U}[0,T],$ there exists a $\tilde{P}\in \mathcal{\tilde{P}}\left( 
\bar{X},\bar{u}\right) $ such that 
\begin{equation}
\begin{array}{c}
\underset{u\in \mathcal{U}[0,T]}{\inf }E_{\tilde{P}}\left[ \Psi \left(
u\right) \right] \geq 0.%
\end{array}
\label{ineq_Var_1}
\end{equation}
\end{theorem}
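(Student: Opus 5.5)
Fix $u(\cdot)\in\mathcal{U}[0,T]$ with $v=u-\bar u$ and $u^{\varepsilon}=\bar u+\varepsilon v$. The plan is to first get a one-sided directional derivative of $J$ at $\bar u$ in terms of $\Psi(u)$ under some $P\in\mathcal{\tilde P}(\bar X,\bar u)$, and then make the measure independent of $u$ with a minimax argument.

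\emph{Step 1 (pointwise inequality).} By optimality, $0\le J(u^{\varepsilon})-J(\bar u)$. Lemma \ref{Taylor} gives $\Gamma(X^{\varepsilon},u^{\varepsilon})=\Gamma(\bar X,\bar u)+\varepsilon\Psi(u)+\mathcal{R}(\varepsilon)$ with $\hat{\mathbb{E}}[|\mathcal{R}(\varepsilon)|]=o(\varepsilon)$. Pick $P^{\varepsilon}\in\mathcal{P}$ with $E_{P^{\varepsilon}}[\Gamma(X^{\varepsilon},u^{\varepsilon})]=\hat{\mathbb{E}}[\Gamma(X^{\varepsilon},u^{\varepsilon})]$; the supremum is attained because $\mathcal{P}$ is weakly compact and $P\mapsto E_P[\xi]$ is continuous for $\xi\in L^1_G$ (Proposition \ref{weekconverge}). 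Then
\[
0\le E_{P^{\varepsilon}}[\Gamma(\bar X,\bar u)]-\hat{\mathbb{E}}[\Gamma(\bar X,\bar u)]+\varepsilon E_{P^{\varepsilon}}[\Psi(u)]+o(\varepsilon)\le \varepsilon E_{P^{\varepsilon}}[\Psi(u)]+o(\varepsilon).
\]
Taking $\varepsilon_k\to0$ with $P^{\varepsilon_k}\to P^u$ weakly, we get $E_{P^u}[\Psi(u)]\ge 0$ by Proposition \ref{weekconverge}, since $\Psi(u)\in L^1_G$. We also need $P^u\in\mathcal{\tilde P}(\bar X,\bar u)$. From the first inequality, $\hat{\mathbb{E}}[\Gamma(\bar X,\bar u)]-E_{P^{\varepsilon}}[\Gamma(\bar X,\bar u)]\le \varepsilon\hat{\mathbb{E}}[|\Psi(u)|]+o(\varepsilon)\to0$. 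Passing to the limit gives $E_{P^u}[\Gamma(\bar X,\bar u)]=\hat{\mathbb{E}}[\Gamma(\bar X,\bar u)]$. So for each $u$, $\sup_{P\in\mathcal{\tilde P}(\bar X,\bar u)}E_P[\Psi(u)]\ge0$.

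\emph{Step 2 (exchange inf and sup).} The goal is $\sup_{P\in\mathcal{\tilde P}}\inf_{u}E_P[\Psi(u)]=\inf_u\sup_{P\in\mathcal{\tilde P}}E_P[\Psi(u)]\ge0$, together with attainment of the outer sup. We apply a minimax theorem (Sion or Ky Fan) on $\mathcal{\tilde P}(\bar X,\bar u)\times\mathcal{U}[0,T]$:

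\textbf{Compactness and convexity in $P$.} Lemma \ref{weakness} gives that $\mathcal{\tilde P}(\bar X,\bar u)$ is convex and weakly compact.

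\textbf{Affinity in $P$.} $P\mapsto E_P[\Psi(u)]$ is affine.

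\textbf{Continuity in $P$.} It is weakly continuous by Proposition \ref{weekconverge}.

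\textbf{Affinity in $u$.} $u\mapsto\Psi(u)$ is affine, because $X_1$ solves the linear equation (\ref{Variation Eq}) and depends linearly on $v=u-\bar u$. Hence $u\mapsto E_P[\Psi(u)]$ is affine on the convex set $\mathcal{U}[0,T]$.

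Sion's theorem needs no topology on the $u$-side beyond convexity when the other side is compact and the function is usc-concave there; here affine-continuous in $P$ suffices. Then $\inf_u\sup_P=\sup_P\inf_u$.

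\emph{Step 3 (attainment).} The function $P\mapsto\inf_uE_P[\Psi(u)]$ is an infimum of continuous functions, hence upper semicontinuous. On the compact set $\mathcal{\tilde P}(\bar X,\bar u)$ it attains its maximum at some $\tilde P$, and this yields (\ref{ineq_Var_1}).

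\emph{Main obstacle.} The delicate point is Step 1: showing that the limit measure lies in $\mathcal{\tilde P}(\bar X,\bar u)$, and that the $o(\varepsilon)$ remainder is uniform over the maximizing measures. This is exactly why Lemma \ref{Taylor} is stated with an $\hat{\mathbb{E}}$-bound on $|\mathcal{R}(\varepsilon)|$, since that bound dominates $E_{P^{\varepsilon}}$. A secondary check is that the hypotheses of the minimax theorem hold with only the affine structure on the $u$-side.
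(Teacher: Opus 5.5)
Your proposal is correct and follows essentially the same route as the paper. A weak limit of maximizing measures gives $\sup_{P\in\mathcal{\tilde{P}}(\bar X,\bar u)}E_P[\Psi(u)]\ge 0$ for each $u$; a minimax theorem on the convex, weakly compact set $\mathcal{\tilde{P}}(\bar X,\bar u)$ exchanges $\inf_u$ and $\sup_P$; and weak compactness produces $\tilde P$. The only cosmetic difference is that you maximize $\Gamma(X^{\varepsilon},u^{\varepsilon})$ and keep just the one-sided bound, whereas the paper maximizes $\Gamma(\bar X,\bar u)+\varepsilon\Psi(u)$ and identifies the whole directional derivative. One small correction: the minimax theorem that needs no topology on the $u$-side is the Kneser--Fan theorem, not Sion's. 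Sion's theorem also applies here, because $u\mapsto E_P[\Psi(u)]$ is affine and Lipschitz in the $M_G^2$-norm.
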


\begin{proof}
\textbf{Step 1}. We first give the limit of $\underset{\varepsilon
\rightarrow 0}{\lim }\frac{J\left( u^{\varepsilon }(\cdot )\right) -J\left( 
\bar{u}(\cdot )\right) }{\varepsilon }.$ Set 
\begin{equation}
\begin{array}{c}
\mathcal{P}_{\varepsilon }^{\ast }\mathcal{=}\left\{ P\in \mathcal{P}\text{ }%
|\text{ }E_{P}\mathbb{[}\Gamma \left( \bar{X},\bar{u}\right) +\varepsilon
\Psi \left( u\right) ]\mathbb{=\hat{E}[}\Gamma \left( \bar{X},\bar{u}\right)
+\varepsilon \Psi \left( u\right) \mathbb{]}\right\} ,\text{ }0<\varepsilon
\leq 1.%
\end{array}
\label{ineq_Var_11}
\end{equation}%
Recalling Lemma \ref{Taylor}, we derive that for each $P^{\varepsilon }\in 
\mathcal{P}_{\varepsilon }^{\ast }$, 
\begin{equation}
\begin{array}{l@{}l}
\displaystyle\frac{J\left( u^{\varepsilon }(\cdot )\right) -J\left( \bar{u}%
(\cdot )\right) }{\varepsilon } &={} \displaystyle \frac{\hat{\mathbb{E}}\left[
\Gamma \left( X^{\varepsilon },u^{\varepsilon }\right) \right] -\hat{\mathbb{%
E}}\left[ \Gamma \left( \bar{X},\bar{u}\right) \right] }{\varepsilon } \\ 
& \displaystyle\leq \frac{\hat{\mathbb{E}}\left[ \Gamma \left( \bar{X},\bar{u%
}\right) +\varepsilon \Psi \left( u\right) \right] -\hat{\mathbb{E}}\left[
\Gamma \left( \bar{X},\bar{u}\right) \right] }{\varepsilon }+\frac{\hat{%
\mathbb{E}}\left[ \left\vert \mathcal{R}(\varepsilon )\right\vert \right] }{%
\varepsilon } \\ 
& \displaystyle\leq \frac{E_{P^{\varepsilon }}\left[ \Gamma \left( \bar{X},%
\bar{u}\right) +\varepsilon \Psi \left( u\right) \right] -E_{P^{\varepsilon
}}\left[ \Gamma \left( \bar{X},\bar{u}\right) \right] }{\varepsilon }+\frac{%
\hat{\mathbb{E}}\left[ \left\vert \mathcal{R}(\varepsilon )\right\vert %
\right] }{\varepsilon } \\ 
& \displaystyle=E_{P^{\varepsilon }}\left[ \Psi \left( u\right) \right]
+o(1).%
\end{array}
\label{ineq_Var_2}
\end{equation}%
Since $\mathcal{P}$ is weakly compact, \ we can choose $\varepsilon
_{n}\rightarrow 0\ $and $P^{\varepsilon _{n}}\in \mathcal{P}_{\varepsilon
_{n}}^{\ast }$ such that $\left\{ P^{\varepsilon _{n}}\right\} _{n\geq 1}$
converges weekly to $\bar{P}\in \mathcal{P}$. Moreover, note that $\Gamma
\left( \bar{X},\bar{u}\right) ,\Psi \left( u\right) \in L_{G}^{1}(\Omega
_{T}).$ Then by Proposition \ref{weekconverge}, we obtain%
\begin{equation}
\begin{array}{c}
\underset{\varepsilon _{n}\rightarrow 0}{\lim }E_{P^{\varepsilon _{n}}}\left[
\Gamma \left( \bar{X},\bar{u}\right) \right] =E_{\bar{P}}\left[ \Gamma
\left( \bar{X},\bar{u}\right) \right]%
\end{array}
\label{ineq_Var_3}
\end{equation}%
and 
\begin{equation}
\begin{array}{c}
\underset{\varepsilon _{n}\rightarrow 0}{\lim }E_{P^{\varepsilon _{n}}}\left[
\Psi \left( u\right) \right] =E_{\bar{P}}\left[ \Psi \left( u\right) \right]%
\end{array}
\label{phi_lim}
\end{equation}%
Next, we show that $\bar{P}\in \mathcal{\tilde{P}}\left( \bar{X},\bar{u}%
\right) ,$ i.e. $E_{\bar{P}}\left[ \Gamma \left( \bar{X},\bar{u}\right) %
\right] =\hat{\mathbb{E}}\left[ \Gamma \left( \bar{X},\bar{u}\right) \right]
.$

Since $\hat{\mathbb{E}}\left[ \left\vert \Psi \left( u\right) \right\vert %
\right] <\infty $, it is easy to see that 
\begin{equation}
\begin{array}{c}
\underset{\varepsilon _{n}\rightarrow 0}{\lim }\left\vert \hat{\mathbb{E}}%
\left[ \Gamma \left( \bar{X},\bar{u}\right) +\varepsilon _{n}\Psi \left(
u\right) \right] -\hat{\mathbb{E}}\left[ \Gamma \left( \bar{X},\bar{u}%
\right) \right] \right\vert \leq \underset{\varepsilon _{n}\rightarrow 0}{%
\lim }\varepsilon _{n}\hat{\mathbb{E}}\left[ \left\vert \Psi \left( u\right)
\right\vert \right] =0.%
\end{array}
\label{ineq_Var_5}
\end{equation}%
By the definition of $\mathcal{P}_{\varepsilon _{n}}^{\ast }$, we have 
\begin{equation*}
E_{P^{\varepsilon _{n}}}\mathbb{[}\Gamma \left( \bar{X},\bar{u}\right)
+\varepsilon _{n}\Psi \left( u\right) ]\mathbb{=\hat{E}[}\Gamma \left( \bar{X%
},\bar{u}\right) +\varepsilon _{n}\Psi \left( u\right) \mathbb{]}\text{, for
each }n\geq 1,
\end{equation*}%
which implies that 
\begin{equation}
\begin{array}{l}
\underset{\varepsilon _{n}\rightarrow 0}{\lim }\left\vert \hat{\mathbb{E}}%
\left[ \Gamma \left( \bar{X},\bar{u}\right) +\varepsilon _{n}\Psi \left(
u\right) \right] -E_{P^{\varepsilon _{n}}}\left[ \Gamma \left( \bar{X},\bar{u%
}\right) \right] \right\vert \\ 
=\underset{\varepsilon _{n}\rightarrow 0}{\lim }\left\vert E_{P^{\varepsilon
_{n}}}\left[ \Gamma \left( \bar{X},\bar{u}\right) +\varepsilon _{n}\Psi
\left( u\right) \right] -E_{P^{\varepsilon _{n}}}\left[ \Gamma \left( \bar{X}%
,\bar{u}\right) \right] \right\vert \leq \underset{\varepsilon
_{n}\rightarrow 0}{\lim }\varepsilon _{n}\hat{\mathbb{E}}\left[ \left\vert
\Psi \left( u\right) \right\vert \right] =0.%
\end{array}
\label{ineq_Var_6}
\end{equation}%
Note that%
\begin{equation}
\begin{array}{l@{}l}
\left\vert \hat{\mathbb{E}}\left[ \Gamma \left( \bar{X},\bar{u}\right) %
\right] -E_{\bar{P}}\left[ \Gamma \left( \bar{X},\bar{u}\right) \right]
\right\vert \leq  {}& \left\vert \hat{\mathbb{E}}\left[ \Gamma \left( \bar{X},%
\bar{u}\right) \right] -\hat{\mathbb{E}}\left[ \Gamma \left( \bar{X},\bar{u}%
\right) +\varepsilon _{n}\Psi \left( u\right) \right] \right\vert +\Bigl\vert%
\hat{\mathbb{E}}\left[ \Gamma \left( \bar{X},\bar{u}\right) +\varepsilon
_{n}\Psi \left( u\right) \right] \Bigr. \\ 
{}& \Bigl.-E_{P^{\varepsilon _{n}}}\left[ \Gamma \left( \bar{X},\bar{%
u}\right) \right] \Bigr\vert+\Bigl\vert E_{P^{\varepsilon _{n}}}\left[
\Gamma \left( \bar{X},\bar{u}\right) \right] -E_{\bar{P}}\left[ \Gamma
\left( \bar{X},\bar{u}\right) \right] \Bigr\vert.%
\end{array}
\label{ineq_Var_7}
\end{equation}%
Then, combining (\ref{ineq_Var_3})-(\ref{ineq_Var_7}), it holds that 
\begin{equation*}
\begin{array}{c}
E_{\bar{P}}\left[ \Gamma \left( \bar{X},\bar{u}\right) \right] =\hat{\mathbb{%
E}}\left[ \Gamma \left( \bar{X},\bar{u}\right) \right] ,\text{ i.e. }\bar{P}%
\in \mathcal{\tilde{P}}\left( \bar{X},\bar{u}\right) .%
\end{array}%
\end{equation*}%
Consequently, from (\ref{ineq_Var_2}) and (\ref{phi_lim}), we obtain%
\begin{equation}
\begin{array}{l}
\displaystyle\underset{\varepsilon \rightarrow 0}{\lim \sup }\frac{J\left(
u^{\varepsilon }(\cdot )\right) -J\left( \bar{u}(\cdot )\right) }{%
\varepsilon }=\underset{n\rightarrow \infty }{\lim }\frac{J\left(
u^{\varepsilon _{n}}(\cdot )\right) -J\left( \bar{u}(\cdot )\right) }{%
\varepsilon _{n}} \\ 
\text{ \ \ \ \ \ \ \ \ \ \ \ \ \ \ \ \ \ \ \ \ \ \ \ \ \ \ \ \ \ \ \ \ \ }%
\displaystyle\leq E_{\bar{P}}\left[ \Psi \left( u\right) \right] \\ 
\text{ \ \ \ \ \ \ \ \ \ \ \ \ \ \ \ \ \ \ \ \ \ \ \ \ \ \ \ \ \ \ \ \ \ }%
\leq \underset{P\in \mathcal{\tilde{P}}\left( \bar{X},\bar{u}\right) }{\sup }%
E_{P}\left[ \Psi \left( u\right) \right] .%
\end{array}
\label{ineq_Var_8}
\end{equation}

Moreover, for each $P\in \mathcal{\tilde{P}}\left( \bar{X},\bar{u}\right) ,$%
\begin{equation*}
\begin{array}{l@{}l}
\displaystyle\frac{J\left( u^{\varepsilon }(\cdot )\right) -J\left( \bar{u}%
(\cdot )\right) }{\varepsilon } & \displaystyle{}=\frac{\hat{\mathbb{E}}\left[
\Gamma \left( X^{\varepsilon },u^{\varepsilon }\right) \right] -\hat{\mathbb{%
E}}\left[ \Gamma \left( \bar{X},\bar{u}\right) \right] }{\varepsilon } \\ 
& \displaystyle {}\geq \frac{E_{P}\left[ \Gamma \left( X^{\varepsilon
},u^{\varepsilon }\right) \right] -E_{P}\left[ \Gamma \left( \bar{X},\bar{u}%
\right) \right] }{\varepsilon } \\ 
& \displaystyle{}=\frac{E_{P}\left[ \Gamma \left( \bar{X},\bar{u}\right)
+\varepsilon \Psi \left( u\right) \right] -E_{P^{\varepsilon }}\left[ \Gamma
\left( \bar{X},\bar{u}\right) \right] }{\varepsilon }+\frac{E_{P}\left[
\left\vert \mathcal{R}(\varepsilon )\right\vert \right] }{\varepsilon } \\ 
& \displaystyle{}=E_{P}\left[ \Psi \left( u\right) \right] +\frac{E_{P}\left[
\left\vert \mathcal{R}(\varepsilon )\right\vert \right] }{\varepsilon }.%
\end{array}%
\end{equation*}%
By Lemma \ref{Taylor}, we derive 
\begin{equation}
\begin{array}{c}
\displaystyle\underset{\varepsilon \rightarrow 0}{\lim }\frac{E_{P}\left[
\left\vert \mathcal{R}(\varepsilon )\right\vert \right] }{\varepsilon }\leq 
\underset{\varepsilon \rightarrow 0}{\lim }\frac{\hat{\mathbb{E}}\left[
\left\vert \mathcal{R}(\varepsilon )\right\vert \right] }{\varepsilon }=0.%
\end{array}
\label{ineq_Var6}
\end{equation}%
Since $P$ is arbitrary, 
\begin{equation}
\begin{array}{c}
\displaystyle\underset{\varepsilon \rightarrow 0}{\lim \inf }\frac{J\left(
u^{\varepsilon }(\cdot )\right) -J\left( \bar{u}(\cdot )\right) }{%
\varepsilon }\geq \underset{P\in \mathcal{\tilde{P}}\left( \bar{X},\bar{u}%
\right) }{\sup }E_{P}\left[ \Psi \left( u\right) \right] .%
\end{array}
\label{ineq_Var_9}
\end{equation}%
Thus, we obtain from (\ref{ineq_Var_8})-(\ref{ineq_Var_9}) that%
\begin{equation}
\begin{array}{c}
\displaystyle\underset{\varepsilon \rightarrow 0}{\lim }\frac{J\left(
u^{\varepsilon }(\cdot )\right) -J\left( \bar{u}(\cdot )\right) }{%
\varepsilon }=\underset{P\in \mathcal{\tilde{P}}\left( \bar{X},\bar{u}%
\right) }{\sup }E_{P}\left[ \Psi \left( u\right) \right] .%
\end{array}
\label{ineq_Var_10}
\end{equation}

\textbf{Step 2. }Now we give the variational inequality. For each $u(\cdot
)\in \mathcal{U}[0,T],$ It has been proved in Step 1 that 
\begin{equation*}
\begin{array}{c}
\displaystyle\underset{\varepsilon \rightarrow 0}{\lim }\frac{J\left(
u^{\varepsilon }(\cdot )\right) -J\left( \bar{u}(\cdot )\right) }{%
\varepsilon }=\underset{P\in \mathcal{\tilde{P}}\left( \bar{X},\bar{u}%
\right) }{\sup }E_{P}\left[ \Psi \left( u\right) \right] \geq 0,%
\end{array}%
\end{equation*}%
which indicates that 
\begin{equation*}
\begin{array}{c}
\underset{u\in \mathcal{U}[0,T]}{\inf }\underset{P\in \mathcal{\tilde{P}}%
\left( \bar{X},\bar{u}\right) }{\sup }E_{P}\left[ \Psi \left( u\right) %
\right] \geq 0.%
\end{array}%
\end{equation*}%
Note that $\Psi \left( u\right) $ is convex in $u$ and $\mathcal{\tilde{P}}%
\left( \bar{X},\bar{u}\right) $ is convex and weakly compact (see Lemma \ref%
{weakness}). By minimax theorem, we obtain 
\begin{equation*}
\begin{array}{c}
\underset{u\in \mathcal{U}[0,T]}{\inf }\underset{P\in \mathcal{\tilde{P}}%
\left( \bar{X},\bar{u}\right) }{\sup }E_{P}\left[ \Psi \left( u\right) %
\right] =\underset{P\in \mathcal{\tilde{P}}\left( \bar{X},\bar{u}\right) }{%
\sup }\underset{u\in \mathcal{U}[0,T]}{\inf }E_{P}\left[ \Psi \left(
u\right) \right] \geq 0.%
\end{array}%
\end{equation*}%
By the definition of the supremum, for each $\varepsilon >0,$ there exists $%
P^{\varepsilon }\in \mathcal{\tilde{P}}\left( \bar{X},\bar{u}\right) $ such
that 
\begin{equation}
\underset{u\in \mathcal{U}[0,T]}{\inf }E_{P^{\varepsilon }}\left[ \Psi
\left( u\right) \right] \geq -\varepsilon .  \label{ineq_Var11}
\end{equation}%
By the weak compactness of $\mathcal{\tilde{P}}\left( \bar{X},\bar{u}\right)
,$ we derive that there exists a $\tilde{P}\in \mathcal{\tilde{P}}\left( 
\bar{X},\bar{u}\right) $ such that $P^{\varepsilon _{n}}\rightarrow \tilde{P}
$ weakly as $\varepsilon _{n}\rightarrow 0$. Then for each $u(\cdot )\in 
\mathcal{U}[0,T],$ we obtain from (\ref{ineq_Var11}) that 
\begin{equation}
\begin{array}{c}
\underset{\varepsilon _{n}\rightarrow 0}{\lim }E_{P^{\varepsilon _{n}}}\left[
\Psi \left( u\right) \right] =E_{\tilde{P}}\left[ \Psi \left( u\right) %
\right] \geq 0.%
\end{array}%
\end{equation}%
Consequently, we get (\ref{ineq_Var_1}). The proof is complete.
\end{proof}

\subsection{Stochastic maximum principle}

In this subsection, we will give the Necessary conditions for optimality. To
begin with, we introduce several spaces under $\tilde{P}\in \mathcal{\tilde{P%
}}\left( \bar{X},\bar{u}\right) $ and the definition of $M$-solution to
Backward stochastic Volterra integral equations (BSVIE). Let $\mathcal{F}%
_{t}^{\tilde{P}}=\sigma \left( B_{s},0\leq s\leq t\right) \vee \mathcal{N}^{%
\tilde{P}}$ be the completed filtration under $\tilde{P},$ where $\mathcal{N}%
^{\tilde{P}}$ denotes the family of all $\tilde{P}$-null sets. For $%
k=n,n\times d$, set 
\begin{equation*}
\left. 
\begin{array}{l}
\displaystyle L_{\tilde{P},\mathcal{F}_{T}}^{2}(0,T;\mathbb{R}^{k}):=%
\biggl\{ X:[0,T]\times \Omega \rightarrow \mathbb{R}^{k}|X\text{ is }%
\mathcal{F}_{T}\text{-measurable and }E_{\tilde{P}}\biggl[ %
\int_{0}^{T}|X(s)|^{2}ds\biggr] <\infty \biggr\} , \\ 
\displaystyle M_{\tilde{P}}^{2}(0,T;\mathbb{R}^{k}):=\biggl\{ X:[0,T]\times
\Omega \rightarrow \mathbb{R}^{k}|X\text{ is a progressively measurable and }%
E_{\tilde{P}}\biggl[ \int_{0}^{T}|X(s)|^{2}ds\biggr] <\infty \biggr\} , \\ 
\displaystyle\mathcal{M}_{\tilde{P}}^{2,\perp }(0,T;\mathbb{R}^{k}):=\left\{
X:[0,T]\times \Omega \rightarrow \mathbb{R}^{k}|X\text{ is a square
integrable martingale that is orthogonal to }B\right\} , \\ 
\displaystyle L_{\tilde{P}}^{2}\left( 0,T;M_{\tilde{P}}^{2}(0,T;\mathbb{R}%
^{n\times d})\right) :=\biggl\{ Z:\left[ 0,T\right] ^{2}\times \Omega
\rightarrow \mathbb{R}^{n\times d}|\text{ for each }t\in \left[ 0,T\right] ,%
\text{ }Z\left( t,\cdot \right) \in M_{\tilde{P}}^{2}(0,T;\mathbb{R}%
^{n\times d})\biggr. \\ 
\displaystyle \text{ \ \ \ \ \ \ \ \ \ \ \ \ \ \ \ \ \ \ \ \ \ \ \ \ \ \ \ \
\ \ \ \ \ \ \ \ \ \ \ \ }\biggl. \text{and }E_{\tilde{P}}\biggl[ %
\int_{0}^{T}\int_{0}^{T}\left\vert Z\left( t,s\right) \right\vert ^{2}dsdt%
\biggr] < \infty \biggr\} .%
\end{array}%
\right.
\end{equation*}%
In the sequel, for $\left[ a,b\right] \subset \left[ 0,T\right] $, we denote
by $L_{\tilde{P},\mathcal{F}_{T}}^{2}(a,b;\mathbb{R}^{k}),M_{\tilde{P}%
}^{2}(a,b;\mathbb{R}^{k})$ and $\mathcal{M}_{\tilde{P}}^{2,\perp }(a,b;%
\mathbb{R}^{k})$ the corresponding spaces of the stochastic processes on the
interval $\left[ a,b\right] $.

\begin{definition}
\label{N_def}Let $\mathcal{N}_{\tilde{P}}^{2}\bigl(0,T;\mathcal{M}_{\tilde{P}%
}^{2,\perp }(0,T;\mathbb{R}^{n\times d})\bigr)$ denote the set of all
processes $N:\left[ 0,T\right] ^{2}\times \Omega \rightarrow \mathbb{R}%
^{n\times d}$ such that 
\begin{equation*}
N\left( t,s\right) =\bar{N}\left( t,s\right) I_{\Delta ^{\ast }\left[ 0,T%
\right] }\left( t,s\right) +\tilde{N}\left( t,s\right) I_{\Delta \left[ 0,T%
\right] }\left( t,s\right) .
\end{equation*}%
Moreover, for each fixed $t\in \left[ 0,T\right] ,$ the process $N$
satisfies the following conditions:
\end{definition}

(i) $\left( \bar{N}\left( t,s\right) \right) _{s\in \left[ t,T\right] }\in 
\mathcal{M}_{\tilde{P}}^{2,\perp }(t,T;\mathbb{R}^{n\times d})$ with $\bar{N}%
\left( t,t\right) =0;$

(ii) $\bigl( \tilde{N}\left( t,s\right) \bigr) _{s\in \left[ 0,t\right] }\in 
\mathcal{M}_{\tilde{P}}^{2,\perp }(0,t;\mathbb{R}^{n\times d})$ with $\tilde{%
N}\left( t,0\right) =0;$

(iii) $\displaystyle E_{\tilde{P}}\biggl[ \int_{0}^{T}\left\langle \bar{N}%
\left( t,\cdot \right) \right\rangle _{T}dt\biggr] <\infty .$\newline
For briefly, we denote%
\begin{equation*}
\left\{ 
\begin{array}{l}
\mathcal{S}_{\tilde{P}}^{2}\left( 0,T\right) =M_{\tilde{P}}^{2}(0,T;\mathbb{R%
}^{n})\times M_{\tilde{P}}^{2}(0,T;\mathbb{R}^{n\times d})\times \mathcal{M}%
_{\tilde{P}}^{2,\perp }(0,T;\mathbb{R}^{n}); \\ 
\mathcal{H}_{\tilde{P}}^{2}\left( 0,T\right) =M_{\tilde{P}}^{2}(0,T;\mathbb{R%
}^{n})\times L_{\tilde{P}}^{2}\bigl( 0,T;M_{\tilde{P}}^{2}(0,T;\mathbb{R}%
^{n\times d})\bigr) \times \mathcal{N}_{\tilde{P}}^{2}\bigl (0,T;\mathcal{M}%
_{\tilde{P}}^{2,\perp }(0,T;\mathbb{R}^{n\times d})\bigr) .%
\end{array}%
\right.
\end{equation*}

Note that since the $G$-Brownian motion $B$ under $\tilde{P}$ is only a
continuous martingale$,$ the martingale representation theorem may fail in
general. Hence, we need to introduce an additional term $N$ orthogonal to $%
B. $ Accordingly, the BSVIE under $\tilde{P}$ takes the following form:%
\begin{equation}
\begin{array}{c}
\displaystyle Y\left( t\right) =\xi \left( t\right) +\int_{t}^{T}f\left(
t,s,Y\left( s\right) ,Z\left( t,s\right) ,Z\left( s,t\right) \right) ds-%
\underset{i=1}{\overset{d}{\sum }}\int_{t}^{T}Z^{i}\left( t,s\right)
dB_{s}^{i}-\underset{i=1}{\overset{d}{\sum }}\int_{t}^{T}dN^{i}\left(
t,s\right) .%
\end{array}
\label{BSVIE_N^}
\end{equation}%
To ensure the wellposedness of (\ref{BSVIE_N^}), we impose the following
assumption:

(H*) Suppose that $\xi \left( \cdot \right) \in L_{\tilde{P},\mathcal{F}%
_{T}}^{2}(0,T;\mathbb{R}^{n}),$ $f:\Delta ^{\ast }\left[ 0,T\right] \times 
\mathbb{R}^{n}\times \mathbb{R}^{n\times d}\times \mathbb{R}^{n\times
d}\times \Omega \rightarrow \mathbb{R}^{n}$ is $\mathcal{B}(\Delta
^{\ast }\left[ 0,T\right] \times \mathbb{R}^{n}\times \mathbb{R}^{n\times
d}\times \mathbb{R}^{n\times d}) \otimes \mathcal{F}_{T}$-measurable
and the mapping $s\mapsto f\left( t,s,y,z_{1},z_{2}\right) $ is
progressively measurable for each fixed $\left( t,y,z_{1},z_{2}\right) .$
Moreover, it holds that 
\begin{equation}
\begin{array}{c}
\displaystyle E_{\tilde{P}}\biggl[ \int_{0}^{T}\biggl( \int_{t}^{T}\left%
\vert f\left( t,s,0,0\right) \right\vert ds\biggr) ^{2}dt\biggr] <\infty%
\end{array}%
\end{equation}%
and there exists a constant $L>0$ such that for each $\left( t,s\right) \in
\Delta ^{\ast }\left[ 0,T\right] ,$ $y,y^{\prime }\in \mathbb{R}^{n},$ and $%
z_{1},z_{1}^{\prime },z_{2},z_{2}^{\prime }\in \mathbb{R}^{n\times d},$%
\begin{equation}
\left\vert f\left( t,s,y,z_{1},z_{2}\right) -f\left( t,s,y^{\prime
},z_{1}^{\prime },z_{2}^{\prime }\right) \right\vert \leq L\left( \left\vert
y-y^{\prime }\right\vert +\left\vert z_{1}-z_{1}^{\prime }\right\vert
+\left\vert z_{2}-z_{2}^{\prime }\right\vert \right).
\end{equation}

From \cite{yong2008Wellposedness}, we now proceed to give the definition of the $M$-solution
adapted to our setting under $\tilde{P}$.

\begin{definition}
\label{M_jie}Let (H*) hold. A triplet of processes $(Y,Z,N)$ is called an
adapted $M$-solution to BSVIE under $\tilde{P}$ if it satisfies the
following properties:
\end{definition}

(a) $(Y,Z,N)\in \mathcal{H}_{\tilde{P}}^{2}\left( 0,T\right) ,$

(b) $(Y,Z,N)$ satisfies%
\begin{equation}
\begin{array}{c}
\displaystyle Y\left( t\right) =\xi \left( t\right) +\int_{t}^{T}f\left(
t,s,Y\left( s\right) ,Z\left( t,s\right) ,Z\left( s,t\right) \right) ds-%
\underset{i=1}{\overset{d}{\sum }}\int_{t}^{T}Z^{i}\left( t,s\right)
dB_{s}^{i}-\underset{i=1}{\overset{d}{\sum }}\int_{t}^{T}dN^{i}\left(
t,s\right) ;%
\end{array}%
\end{equation}

(c) It holds that%
\begin{equation}
\begin{array}{c}
\displaystyle Y\left( t\right) =E_{\tilde{P}}\left[ Y\left( t\right) \right]
+\underset{i=1}{\overset{d}{\sum }}\int_{0}^{t}Z^{i}\left( t,s\right)
dB_{s}^{i}+\underset{i=1}{\overset{d}{\sum }}\int_{0}^{t}dN^{i}\left(
t,s\right) .%
\end{array}%
\end{equation}%
By martingale representation, the adapted $M$-solution determines the values
of $Z\left( t,s\right) $ and $N\left( t,s\right) $ for each $(t,s)\in \Delta %
\left[ 0,T\right] .$

\begin{remark}
By Proposition 3.12 in \cite{popier2021_N} and (H*), the BSVIE (\ref{BSVIE_N^}) admits a
unique $M$-solution $(Y,Z,N)\in \mathcal{H}_{\tilde{P}}^{2}\left( 0,T\right)
.$ The conditions $\bar{N}\left( t,t\right) =0$ and $\tilde{N}\left(
t,0\right) =0$ in Definition \ref{N_def} are imposed to ensure the
uniqueness of the $M$-solution to BSVIE. More precisely, without the
condition $\bar{N}\left( t,t\right) =0,$ the process%
\begin{equation}
\hat{N}\left( t,s\right) :=\bar{N}\left( t,s\right) +c,\text{ for any
constant }c\neq 0,
\end{equation}%
would also satisfy the same martingale and orthogonality conditions. Since $%
\int_{t}^{T}d\bar{N}\left( t,s\right) =\int_{t}^{T}d\hat{N}\left( t,s\right)
,$ it yields that both $\left( Y,Z,\bar{N}\right) $ and $(Y,Z,\hat{N})$
satisfy condition (b) in Definition \ref{M_jie}, violating the uniqueness of
the M-solution. The case for $N\left( t,0\right) =0$ is similar.
\end{remark}

\begin{remark}
\label{<B>}Under $\tilde{P}$, it holds that $d\left\langle
B^{i},B^{j}\right\rangle _{s}=\gamma ^{ij}\left( s\right) ds$, where $B$ is
a $d$-dimensional $G$-Brownian motion and $\left( \gamma ^{ij}\left(
s\right) \right) _{1\leq i,j\leq d}=\Sigma \left( s\right) .$ Further
details can be found in (\ref{G_def}).
\end{remark}

Next, we establish the duality principle of linear stochastic integral
equations.

\begin{proposition}
\label{duality principle}Let $\alpha \left( \cdot \right) \in \tilde{M}_{%
\tilde{P}}^{2}(0,T;\mathbb{R}^{n})$ be mean-square\ continuous and $\beta
\left( \cdot \right) \in L_{\tilde{P},\mathcal{F}_{T}}^{2}(0,T;\mathbb{R}%
^{n}).$ Suppose that $B_{1},B_{2},B_{3}$ satisfy (H1), (H3) and the mappings 
$B_{1},B_{2},B_{3}:\left[ 0,T\right] ^{2}\times \Omega \mapsto \mathbb{R}%
^{n\times n}$ are uniformly bounded. Let $\hat{X}\in \tilde{M}_{\tilde{P}%
}^{2}(0,T;\mathbb{R}^{n})$ be the solution of $G$-SVIE under $\tilde{P}:$ 
\begin{equation}
\begin{array}{c}
\displaystyle\hat{X}\left( t\right) =\alpha \left( t\right)
+\int_{0}^{t}B_{1}(t,s)\hat{X}(s)ds+\underset{i,j=1}{\overset{d}{\sum }}%
\int_{0}^{t}B_{2}^{ij}(t,s)\hat{X}(s)d\left\langle B^{i},B^{j}\right\rangle
_{s}+\underset{i=1}{\overset{d}{\sum }}\int_{0}^{t}B_{3}^{i}(t,s)\hat{X}%
(s)dB_{s}^{i},\text{ }t\in \left[ 0,T\right] ,%
\end{array}
\label{duality_x}
\end{equation}%
and $(\hat{Y},\hat{Z},\hat{N})\in \mathcal{H}_{\tilde{P}}^{2}\left(
0,T\right) \mathcal{\ }$be the adapted $M$-solution to BSVIE under $\tilde{P}%
:$%
\begin{equation}
\begin{array}{l}
\displaystyle\hat{Y}\left( t\right) =\beta \left( t\right)
+\int_{t}^{T}B_{1}(s,t)^{T}\hat{Y}\left( s\right) ds+\underset{i,j=1}{%
\overset{d}{\sum }}\int_{t}^{T}\left[ \gamma ^{ij}\left( t\right)
B_{2}^{ij}(s,t)^{T}\hat{Y}\left( s\right) +\gamma ^{ij}\left( t\right)
B_{3}^{i}(s,t)^{T}\hat{Z}^{j}\left( s,t\right) \right] ds \\ 
\displaystyle\text{ \ \ \ \ \ \ \ \ \ }-\underset{i=1}{\overset{d}{\sum }}%
\int_{t}^{T}\hat{Z}^{i}\left( t,s\right) dB_{s}^{i}-\underset{i=1}{\overset{d%
}{\sum }}\int_{t}^{T}d\hat{N}^{i}\left( t,s\right) .%
\end{array}
\label{duality_y}
\end{equation}%
Then the following relations holds: 
\begin{equation}
\begin{array}{c}
\displaystyle E_{\tilde{P}}\biggl[\int_{0}^{T}\bigl\langle\hat{X}\left(
t\right) ,\beta \left( t\right) \bigr\rangle dt\biggr]=E_{\tilde{P}}\biggl[%
\int_{0}^{T}\bigl\langle\alpha \left( t\right) ,\hat{Y}\left( t\right) %
\bigr\rangle dt\biggr].%
\end{array}%
\end{equation}
\end{proposition}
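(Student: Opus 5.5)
The plan is to imitate Yong's duality argument for classical SVIEs/BSVIEs under the single measure $\tilde{P}$. Under $\tilde{P}$ the process $B$ is a continuous square integrable martingale with $d\langle B^{i},B^{j}\rangle_{s}=\gamma^{ij}(s)ds$ (Remark \ref{<B>}), and $\hat N(t,\cdot)$ is orthogonal to $B$. We start from the left-hand side $E_{\tilde{P}}[\int_{0}^{T}\langle \hat{X}(t),\beta(t)\rangle dt]$ and replace $\beta(t)$ using the BSVIE (\ref{duality_y}), i.e.
\[
\beta(t)=\hat{Y}(t)-\int_{t}^{T}\Bigl[B_{1}(s,t)^{T}\hat{Y}(s)+\sum_{i,j}\gamma^{ij}(t)\bigl(B_{2}^{ij}(s,t)^{T}\hat{Y}(s)+B_{3}^{i}(s,t)^{T}\hat{Z}^{j}(s,t)\bigr)\Bigr]ds+\sum_{i}\int_{t}^{T}\hat{Z}^{i}(t,s)dB_{s}^{i}+\sum_{i}\int_{t}^{T}d\hat{N}^{i}(t,s).
\]
Since $\hat{X}(t)$ is $\mathcal{F}_{t}$-measurable and square integrable, the last two stochastic integrals are martingale increments on $[t,T]$, so their pairing with $\hat{X}(t)$ has zero $\tilde{P}$-expectation. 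This gives
\[
E_{\tilde{P}}\Bigl[\int_{0}^{T}\langle\hat{X}(t),\beta(t)\rangle dt\Bigr]=E_{\tilde{P}}\Bigl[\int_{0}^{T}\langle\hat{X}(t),\hat{Y}(t)\rangle dt\Bigr]-E_{\tilde{P}}\Bigl[\int_{0}^{T}\!\!\int_{t}^{T}\langle\hat{X}(t),\cdots\rangle ds\,dt\Bigr].
\]

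\textbf{Step 1 (rewriting the first term).} In the first term we substitute the forward equation (\ref{duality_x}) for $\hat{X}(t)$. Next we use condition (c) of the $M$-solution, $\hat{Y}(t)=E_{\tilde{P}}[\hat{Y}(t)]+\sum_{i}\int_{0}^{t}\hat{Z}^{i}(t,s)dB^{i}_{s}+\sum_{i}\int_{0}^{t}d\hat N^{i}(t,s)$, and apply the isometry for continuous martingales. Orthogonality of $\hat N$ to $B$ kills the $\hat N$ cross terms. This leads to
\[
E_{\tilde{P}}\Bigl[\Bigl\langle \sum_{i}\int_{0}^{t}B_{3}^{i}(t,s)\hat{X}(s)dB^{i}_{s},\hat{Y}(t)\Bigr\rangle\Bigr]=E_{\tilde{P}}\Bigl[\sum_{i,j}\int_{0}^{t}\gamma^{ij}(s)\langle B_{3}^{i}(t,s)\hat{X}(s),\hat{Z}^{j}(t,s)\rangle ds\Bigr].
\]
The $ds$ term and the $d\langle B\rangle$ term are kept as they are, writing $d\langle B^i,B^j\rangle_s=\gamma^{ij}(s)ds$. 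The first term thus becomes $E_{\tilde{P}}[\int_0^T\langle\alpha(t),\hat Y(t)\rangle dt]$ plus the $t$-integral of
\[
\int_{0}^{t}\Bigl\langle \hat X(s),\,B_{1}(t,s)^{T}\hat{Y}(t)+\sum_{i,j}\gamma^{ij}(s)\bigl(B_{2}^{ij}(t,s)^{T}\hat{Y}(t)+B_{3}^{i}(t,s)^{T}\hat{Z}^{j}(t,s)\bigr)\Bigr\rangle ds .
\]

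\textbf{Step 2 (cancellation).} We apply Fubini on $\Delta[0,T]$, swapping the names of $t$ and $s$. The resulting term is exactly
\[
E_{\tilde{P}}\Bigl[\int_{0}^{T}\Bigl\langle\hat{X}(t),\int_{t}^{T}\Bigl[B_{1}(s,t)^{T}\hat{Y}(s)+\sum_{i,j}\gamma^{ij}(t)\bigl(B_{2}^{ij}(s,t)^{T}\hat{Y}(s)+B_{3}^{i}(s,t)^{T}\hat{Z}^{j}(s,t)\bigr)\Bigr]ds\Bigr\rangle dt\Bigr],
\]
which cancels the second term above and leaves the claimed identity. Note that the values $\hat{Z}(s,t)$ with $t<s$ appearing in the BSVIE are exactly those determined by representation (c). This is the role of the $M$-solution.

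\textbf{Main obstacle: justification.} Every interchange of expectation and integration needs integrability, and every vanishing stochastic integral needs a true martingale. Boundedness of $B_{1},B_{2},B_{3}$ and of $\gamma$ (since $\Sigma$ is bounded) gives the required integrability. So do $\sup_{t}E_{\tilde{P}}|\hat{X}(t)|^{2}<\infty$, which comes from mean-square continuity, the $\mathcal{H}^{2}_{\tilde{P}}$ bounds on $(\hat{Y},\hat{Z},\hat N)$, and Cauchy--Schwarz; with these, the products are $L^{1}$ and the local martingales are genuine martingales by BDG. One also needs joint measurability of $(t,s)\mapsto\hat{Z}(t,s)$ to apply Fubini, and the isometry with cross-variation $\gamma^{ij}$ for the forward stochastic integral. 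The dimension bookkeeping in the $\gamma^{ij}B_{3}^{i}$ term requires care, but the pairing $\sum_{i,j}\gamma^{ij}(s)\langle B_3^i\hat X,\hat Z^j\rangle$ matches the BSVIE generator exactly.
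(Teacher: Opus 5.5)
Your proposal is correct and follows essentially the same route as the paper. You substitute the forward equation into $E_{\tilde P}[\int_0^T\langle\cdot,\hat Y(t)\rangle dt]$, handle the $dB$ term with representation (c), the relation $d\langle B^i,B^j\rangle_s=\gamma^{ij}(s)ds$ and the orthogonality of $\hat N$, apply Fubini on $\Delta[0,T]$, and finally use the BSVIE together with the $\mathcal F_t$-measurability of $\hat X(t)$ to remove the martingale increments on $[t,T]$. The only differences are that you run the chain of equalities from the $\beta$-side instead of the $\alpha$-side, and that you spell out the integrability justifications the paper leaves implicit.
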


\begin{proof}
By Theorem \ref{non_ujie} and Proposition 3.12 in \cite{popier2021_N}, the $G$-SVIE(\ref%
{duality_x}) and BSVIE(\ref{duality_y}) admit a unique solution $\hat{X}\in 
\tilde{M}_{\tilde{P}}^{2}(0,T;\mathbb{R}^{n})$ and $(\hat{Y},\hat{Z},\hat{N}%
)\in \mathcal{H}_{\tilde{P}}^{2}\left( 0,T\right) $ respectively. For each $%
(t,s)\in \Delta \left[ 0,T\right] $ and $1\leq i,j\leq d,$ we denote%
\begin{equation*}
\begin{array}{c}
\displaystyle C^{ij}\left( t,s\right) =\frac{1}{d^{2}}%
B_{1}(t,s)+B_{2}^{ij}(t,s)\gamma ^{ij}\left( s\right) ,%
\end{array}%
\end{equation*}%
where $\gamma ^{ij}\left( s\right) \in \mathbb{R}^{1\times 1}$ and $\left(
\gamma ^{ij}\left( s\right) \right) =\Sigma \left( s\right) $ in (\ref{G_def}%
)$.$ By Remark \ref{<B>} and Eq.(\ref{duality_x}), we have%
\begin{equation}
\begin{array}{l}
\displaystyle E_{\tilde{P}}\biggl[\int_{0}^{T}\bigl\langle\alpha \left(
t\right) ,\hat{Y}\left( t\right) \bigr\rangle dt\biggr] \\ 
\displaystyle=E_{\tilde{P}}\biggl[\int_{0}^{T}\Bigl\langle\hat{X}\left(
t\right) -\underset{i,j=1}{\overset{d}{\sum }}\int_{0}^{t}C^{ij}\left(
t,s\right) \hat{X}(s)ds-\underset{i=1}{\overset{d}{\sum }}%
\int_{0}^{t}B_{3}^{i}(t,s)\hat{X}(s)dB_{s}^{i},\hat{Y}\left( t\right) %
\Bigr\rangle dt\biggr] \\ 
\displaystyle=E_{\tilde{P}}\biggl[\int_{0}^{T}\bigl\langle\hat{X}\left(
t\right) ,\hat{Y}\left( t\right) \bigr\rangle dt\biggr]-\underset{i,j=1}{%
\overset{d}{\sum }}E_{\tilde{P}}\biggl[\int_{0}^{T}\int_{0}^{t}\Bigl\langle%
\hat{X}(s),C^{ij}\left( t,s\right) ^{T}\hat{Y}\left( t\right) \Bigr\rangle %
dsdt\biggr] \\ 
\displaystyle\text{ \ \ }-E_{\tilde{P}}\biggl[\int_{0}^{T}\biggl\langle%
\underset{i=1}{\overset{d}{\sum }}\int_{0}^{t}B_{3}^{i}(t,s)\hat{X}%
(s)dB_{s}^{i},\hat{Y}\left( t\right) \biggr\rangle dt\biggr].%
\end{array}
\label{duality1}
\end{equation}%
From Proposition 2.17 of Section 3 in \cite{shu_K}and Definition \ref{M_jie}, we
derive%
\begin{equation}
\begin{array}{l}
\displaystyle E_{\tilde{P}}\biggl[\int_{0}^{T}\biggl\langle\underset{i=1}{%
\overset{d}{\sum }}\int_{0}^{t}B_{3}^{i}(t,s)\hat{X}(s)dB_{s}^{i},\hat{Y}%
\left( t\right) \biggr\rangle dt\biggr] \\ 
\displaystyle=E_{\tilde{P}}\biggl[\int_{0}^{T}\biggl\langle\underset{i=1}{%
\overset{d}{\sum }}\int_{0}^{t}B_{3}^{i}(t,s)\hat{X}(s)dB_{s}^{i},E_{\tilde{P%
}}\bigl[\hat{Y}\left( t\right) \bigr]+\underset{i=1}{\overset{d}{\sum }}%
\int_{0}^{t}\hat{Z}^{i}\left( t,s\right) dB_{s}^{i}-\underset{i=1}{\overset{d%
}{\sum }}\int_{0}^{t}d\hat{N}^{i}\left( t,s\right) \biggr\rangle dt\biggr]
\\ 
\displaystyle=\underset{i,j=1}{\overset{d}{\sum }}E_{\tilde{P}}\biggl[%
\int_{0}^{T}\int_{0}^{t}\left\langle B_{3}^{i}(t,s)\hat{X}(s),\hat{Z}%
^{j}\left( t,s\right) \right\rangle d\left\langle B^{i},B^{j}\right\rangle
_{s}dt\biggr] \\ 
\displaystyle=\underset{i,j=1}{\overset{d}{\sum }}E_{\tilde{P}}\biggl[%
\int_{0}^{T}\int_{0}^{t}\Bigl\langle\hat{X}(s),\gamma ^{ij}\left( s\right)
B_{3}^{i}(t,s)^{T}\hat{Z}^{j}\left( t,s\right) \Bigr \rangle dsdt\biggr].%
\end{array}
\label{duality2}
\end{equation}%
Combining (\ref{duality1}) with (\ref{duality2}), it holds that%
\begin{equation}
\begin{array}{l}
\displaystyle E_{\tilde{P}}\biggl[\int_{0}^{T}\bigl\langle\alpha \left(
t\right) ,\hat{Y}\left( t\right) \bigr\rangle dt\biggr] \\ 
\displaystyle=E_{\tilde{P}}\biggl[\int_{0}^{T}\bigl\langle\hat{X}\left(
t\right) ,\hat{Y}\left( t\right) \bigr\rangle dt\biggr]-\underset{i,j=1}{%
\overset{d}{\sum }}E_{\tilde{P}}\biggl[\int_{0}^{T}\int_{s}^{T}\Bigl\langle%
\hat{X}(s),C^{ij}\left( t,s\right) ^{T}\hat{Y}\left( t\right) \Bigr\rangle %
dtds\biggr] \\ 
\displaystyle\text{ \ \ }-\underset{i,j=1}{\overset{d}{\sum }}E_{\tilde{P}}%
\biggl[\int_{0}^{T}\int_{s}^{T}\left\langle \hat{X}(s),\gamma ^{ij}\left(
s\right) B_{3}^{i}(t,s)^{T}\hat{Z}^{j}\left( t,s\right) \right\rangle dtds%
\biggr] \\ 
\displaystyle=E_{\tilde{P}}\biggl[\int_{0}^{T}\bigl\langle\hat{X}\left(
t\right) ,\hat{Y}\left( t\right) \bigr\rangle dt\biggr]-\underset{i,j=1}{%
\overset{d}{\sum }}E_{\tilde{P}}\biggl[\int_{0}^{T}\int_{t}^{T}\left\langle 
\hat{X}(t),C^{ij}\left( s,t\right) ^{T}\hat{Y}\left( s\right) \right\rangle
dsdt\biggr] \\ 
\displaystyle\text{ \ \ }-\underset{i,j=1}{\overset{d}{\sum }}E_{\tilde{P}}%
\biggl[\int_{0}^{T}\int_{t}^{T}\left\langle \hat{X}(t),\gamma ^{ij}\left(
t\right) B_{3}^{i}(s,t)^{T}\hat{Z}^{j}\left( s,t\right) \right\rangle dsdt%
\biggr] \\ 
\displaystyle=E_{\tilde{P}}\biggl[\int_{0}^{T}\biggl\langle\hat{X}\left(
t\right) ,\hat{Y}\left( t\right) -\underset{i,j=1}{\overset{d}{\sum }}%
\int_{t}^{T}\Bigl[C^{ij}\left( s,t\right) ^{T}\hat{Y}\left( s\right) +\gamma
^{ij}\left( t\right) B_{3}^{i}(s,t)^{T}\hat{Z}^{j}\left( s,t\right) \Bigr]ds%
\biggr\rangle dt\biggr] \\ 
\displaystyle=E_{\tilde{P}}\biggl[\int_{0}^{T}\biggl\langle\hat{X}\left(
t\right) ,\beta \left( t\right) -\underset{i=1}{\overset{d}{\sum }}%
\int_{t}^{T}\hat{Z}^{i}\left( t,s\right) dB_{s}^{i}-\underset{i=1}{\overset{d%
}{\sum }}\int_{t}^{T}d\hat{N}^{i}\left( t,s\right) ds\biggr\rangle dt\biggr]
\\ 
\displaystyle=E_{\tilde{P}}\biggl[\int_{0}^{T}\biggl\langle\hat{X}\left(
t\right) ,E_{\tilde{P}}^{\mathcal{F}_{t}}\biggl[\beta \left( t\right) -%
\underset{i=1}{\overset{d}{\sum }}\int_{t}^{T}\hat{Z}^{i}\left( t,s\right)
dB_{s}^{i}-\underset{i=1}{\overset{d}{\sum }}\int_{t}^{T}d\hat{N}^{i}\left(
t,s\right) ds\biggr]\biggr\rangle dt\biggr] \\ 
\displaystyle=E_{\tilde{P}}\biggl [\int_{0}^{T}\bigl\langle\hat{X}\left(
t\right) ,\beta \left( t\right) \bigr\rangle dt\biggr].%
\end{array}
\label{duality3}
\end{equation}
\end{proof}

Consider the following adjoint system associated with the controlled $G$%
-SVIE (\ref{SVIE}) under $\tilde{P}$:%
\begin{empheq}[left=\empheqlbrace]{align}
	& X_{1}(t) = \zeta(t) + \underset{i,j=1}{\overset{d}{%
			\sum }}\int_{0}^{t}\bar{A}_{x}^{ij}(t,s)X_{1}(s)ds +  \underset{i=1}{\overset{d}{%
			\sum }}\int_{0}^{t}\bar{\sigma}_{x}^{i}(t,s)X_{1}(s)dB_{s}^{i}, \label{eq:X1} \\
	& \mu(t) = \varphi_{x}(\bar{X}(T)) - \underset{i=1}{\overset{d}{%
			\sum }} \int_{t}^{T}\eta^{i}(s)dB_{s}^{i} - \underset{i=1}{\overset{d}{%
			\sum }} \int_{t}^{T}dM^{i}(s), \label{eq:mu} \\
	& \begin{aligned}
		Y(t) =& \bar{g}_{x}(t) +\underset{i,j=1}{\overset{d}{%
				\sum }}\Bigl[ \bar{A}_{x}^{ij}(T,t)^{T}\mathbb{\varphi }_{x}(\bar{X}%
		(T))+\gamma ^{ij}\left( t\right) \bar{\sigma}_{x}^{i}(T,t)^{T}\eta
		^{j}\left( t\right) \Bigr] \\
		&+\underset{i,j=1}{\overset{d}{%
				\sum }} \int_{t}^{T}\Bigl[ \bar{A}_{x}^{ij}(s,t)Y(s) + \gamma ^{ij}\left( t\right) \bar{\sigma}_{x}^{i}(s,t)Z^{j}(s,t) \Bigr] ds - \underset{i=1}{\overset{d}{%
				\sum }} \int_{t}^{T}Z^{i}(t,s)dB^{i}_{s}  \\
			&-\underset{i=1}{\overset{d}{%
				\sum }} \int_{t}^{T}dN^{i}(t,s),\
	\end{aligned} \label{eq:Y}
\end{empheq}where 
\begin{equation}
\begin{array}{c}
\displaystyle\bar{A}_{x}^{ij}(t,s)=\frac{1}{d^{2}}\bar{b}_{x}(t,s)+\bar{h}%
_{x}^{ij}(t,s)\gamma ^{ij}\left( s\right) ,\bar{A}_{u}^{ij}(t,s)=\frac{1}{%
d^{2}}\bar{b}_{u}(t,s)+\bar{h}_{u}^{ij}(t,s)\gamma ^{ij}\left( s\right)%
\end{array}
\label{A_Def}
\end{equation}
and 
\begin{equation}
\begin{array}{c}
\displaystyle\zeta \left( t\right) =\underset{i,j=1}{\overset{d}{\sum }}%
\int_{0}^{t}\bar{A}_{u}^{ij}(t,s)\left( u(s)-\bar{u}(s)\right) ds+\underset{%
i=1}{\overset{d}{\sum }}\int_{0}^{t}\bar{\sigma}_{u}^{i}(t,s)\left( u(s)-%
\bar{u}(s)\right) dB_{s}^{i}.%
\end{array}%
\end{equation}%
See (\ref{cofficient_def}) for more details.

Note that Eq.(\ref{eq:X1}) is the variational equation (see (\ref{Variation Eq})), which admits a
unique solution $X_{1}(\cdot )\in \tilde{M}_{\tilde{P}}^{2}(0,T;\mathbb{R}%
^{n}).$ According to El Karoui and Huang \cite{Karoui1997bsde_N}, the
BSDE (\ref{eq:mu}) has a unique $\left( \mu ,\eta ,M\right) \in \mathcal{S}_{%
\tilde{P}}^{2}\left( 0,T\right) $. Moreover, by Proposition 3.12 in \cite{popier2021_N},
BSVIE (\ref{eq:Y}) admits a unique solution $\left( Y,Z,N\right) \in 
\mathcal{H}_{\tilde{P}}^{2}\left( 0,T\right) .$ Then we introduce the main
result in this Section. Define the Hamiltonian function:%
\begin{equation}
\begin{array}{l@{}l}
H(t,x,Y\left( t\right) ,Z\left( \cdot ,t\right) ,u)= {}& g\left( t,x,u\right) +%
\underset{i,j=1}{\overset{d}{\sum }}A^{ij}(T,t,x,u)^{T}E_{\tilde{P}}^{%
\mathcal{F}_{t}}\left[ \mathbb{\varphi }_{x}(\bar{X}(T))\right] +\underset{%
i,j=1}{\overset{d}{\sum }}\gamma ^{ij}\left( t\right) \sigma
^{i}(T,t,x,u)^{T}\eta ^{j}(t) \\ 
{}& \displaystyle+\underset{i,j=1}{\overset{d}{\sum }}E_{\tilde{P}}^{\mathcal{F%
}_{t}}\biggl[\int_{t}^{T}A^{ij}(s,t,x,u)^{T}Y(s)+\gamma ^{ij}\left( t\right)
\sigma ^{i}(s,t,x,u)^{T}Z^{j}\left( s,t\right) ds\biggr],%
\end{array}
\label{Hamiltonian function.}
\end{equation}%
where $\eta ^{j}(\cdot ),Y\left( \cdot \right) ,Z\left( \cdot ,\cdot \right) 
$ is determined in (\ref{eq:mu}),(\ref{eq:Y}) and 
\begin{equation}
A^{ij}(t,s,x,u)=\frac{1}{d^{2}}b(t,s,x,u)+h^{ij}(t,s,x,u)\gamma ^{ij}\left(
s\right) .  \label{Axu_Def}
\end{equation}
We now state the following stochastic maximum principle.

\begin{theorem}
\label{SMP}Suppose that (H1)-(H6) hold. Let $(\bar{X}(\cdot ),$ $\bar{u}%
(\cdot ))$ be the optimal 2-tuple of Problem (\ref{SG}). Then for each $%
u(\cdot )\in \mathcal{U}[0,T],$ there exists a $\tilde{P}\in \mathcal{\tilde{%
P}}\left( \bar{X},\bar{u}\right) $ such that%
\begin{equation}
\Bigl\langle H_{u}(t,\bar{X}\left( t\right) ,Y\left( t\right) ,Z\left( \cdot
,t\right) ,\bar{u}\left( t\right) ),\left( u(t)-\bar{u}(t)\right) %
\Bigr\rangle\geq 0,\text{a.e. }t\in \left[ 0,T\right] \text{, }\tilde{P}%
\text{-}a.s.
\end{equation}%
where $\left( Y,Z,N\right) \in \mathcal{H}_{\tilde{P}}^{2}\left( 0,T\right) $
is the solution of the adjoint equation (\ref{eq:Y}) under $\tilde{P}$ and
the Hamiltonian function $H$ is defined in (\ref{Hamiltonian function.}).
\end{theorem}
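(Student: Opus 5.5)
Theorem \ref{ineq_Var} supplies a reference measure $\tilde{P}\in\mathcal{\tilde{P}}(\bar{X},\bar{u})$ such that $E_{\tilde{P}}[\Psi(u)]\geq 0$ for every $u(\cdot)\in\mathcal{U}[0,T]$. The plan is to rewrite $E_{\tilde{P}}[\Psi(u)]$ through the adjoint processes, so that it becomes $E_{\tilde{P}}\bigl[\int_0^T\langle H_u(t,\bar{X}(t),Y(t),Z(\cdot,t),\bar{u}(t)),u(t)-\bar{u}(t)\rangle dt\bigr]$, and then pass from this integrated inequality to a pointwise one by localizing the perturbation.

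\textbf{Step 1 (the terminal term).} Under $\tilde{P}$ we have $d\langle B^i,B^j\rangle_s=\gamma^{ij}(s)ds$ by Remark \ref{<B>}, so (\ref{Variation Eq}) can be written as (\ref{eq:X1}) with $\bar{A}^{ij}_x$ and $\zeta$ as in (\ref{A_Def}). Evaluating (\ref{eq:X1}) at $t=T$ gives
\[
X_1(T)=\zeta(T)+\sum_{i,j}\int_0^T\bar{A}^{ij}_x(T,s)X_1(s)ds+\sum_i\int_0^T\bar{\sigma}^i_x(T,s)X_1(s)dB^i_s .
\]
We pair this with $\mu(0)+\sum_i\int_0^T\eta^i dB^i+\sum_i\int_0^T dM^i=\varphi_x(\bar{X}(T))$, taken from (\ref{eq:mu}). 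Since $M$ is orthogonal to $B$, the stochastic-integral products yield $\sum_{i,j}\gamma^{ij}(s)\langle\bar{\sigma}^i_x(T,s)X_1(s),\eta^j(s)\rangle$. The drift parts are handled by conditioning: $E_{\tilde P}[\langle \bar{A}^{ij}_x(T,s)X_1(s),\varphi_x\rangle]=E_{\tilde P}[\langle X_1(s),\bar{A}^{ij}_x(T,s)^TE^{\mathcal F_s}_{\tilde P}[\varphi_x]\rangle]$. The $\zeta(T)$ part gives the $u$-terms $\bar{A}^{ij}_u(T,t)^TE^{\mathcal F_t}[\varphi_x]+\gamma^{ij}\bar{\sigma}^i_u(T,t)^T\eta^j$. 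Altogether, $E_{\tilde{P}}[\varphi_x X_1(T)]=E_{\tilde P}\int_0^T\langle X_1(t),\beta_0(t)\rangle dt+(\text{terms linear in }v)$, where $\beta_0$ is the inhomogeneity of (\ref{eq:Y}) other than $\bar{g}_x$. I expect a subtlety here: in (\ref{eq:Y}) the free term contains $\varphi_x(\bar{X}(T))$ rather than its conditional expectation. I would check that this is harmless, because $\beta$ in Proposition \ref{duality principle} is only required to be $\mathcal{F}_T$-measurable and the duality identity projects onto $\mathcal{F}_t$ anyway.

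\textbf{Step 2 (duality).} Apply Proposition \ref{duality principle} with $\alpha=\zeta$, $\beta(t)=\bar{g}_x(t)+\beta_0(t)$, $B_1=\bar b_x$, $B_2=\bar h_x$, $B_3=\bar\sigma_x$; the latter are bounded and satisfy (H3) by (H2)--(H3). This gives $E_{\tilde P}\int_0^T\langle X_1,\beta\rangle dt=E_{\tilde P}\int_0^T\langle\zeta,Y\rangle dt$. We then substitute the definition of $\zeta$, use Fubini, and use the $M$-solution representation (Definition \ref{M_jie}(c)) to convert $E\int_0^T\langle\int_0^t\bar\sigma^i_u(t,s)v(s)dB^i_s,Y(t)\rangle dt$ into $E\int_0^T\int_s^T\gamma^{ij}(s)\langle v(s),\bar\sigma^i_u(t,s)^TZ^j(t,s)\rangle dt\,ds$. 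This is exactly the computation (\ref{duality2}). Collecting Steps 1--2 with the $\bar{g}_u v$ term of $\Psi$ yields
\[
E_{\tilde{P}}[\Psi(u)]=E_{\tilde{P}}\int_0^T\langle H_u(t,\bar X(t),Y(t),Z(\cdot,t),\bar u(t)),u(t)-\bar u(t)\rangle dt\ \ge 0 .
\]
Here one moves $E^{\mathcal F_t}_{\tilde P}$ inside $H_u$ using the $\mathcal{F}_t$-measurability of $v(t)$.

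\textbf{Step 3 (localization).} Given $u$, and for any $A\in\mathcal{F}_t$-type set $A\subset[0,T]\times\Omega$ that is progressive and of the admissible form, we set $u^A=\bar u+(u-\bar u)I_A$. This control is admissible by convexity, provided $I_A$ can be taken so that $u^A\in M^2_G$. Taking $A=[t_0,t_0+\delta)\times E$, dividing by $\delta$, and using the Lebesgue differentiation theorem gives the pointwise inequality $\tilde P$-a.s., a.e. $t$.

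The main obstacle is this step. Admissible controls must lie in $M^2_G$, which is a quasi-sure completion, whereas $E$ ranges over $\mathcal F^{\tilde P}_{t_0}$-sets, and the indicator of such a set is generally not in $M^2_G$. I would try to approximate $I_E$ by $Lip(\Omega_{t_0})$ functions $\psi_k$ with values in $[0,1]$, converging in $L^1(\tilde P)$. The perturbations $\bar u+\psi_k I_{[t_0,t_0+\delta)}(u-\bar u)$ are admissible by convexity of $U$. We then pass to the limit in the linear functional, using the $L^1(\tilde P)$-integrability of $H_u\cdot v$ and dominated convergence under the single measure $\tilde P$. One more issue needs checking: $\tilde{P}$ in Theorem \ref{ineq_Var} is independent of $u$, so the same $\tilde P$ serves all the localized perturbations. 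This is consistent with the statement's quantifier order, which in fact is weaker than what we obtain.
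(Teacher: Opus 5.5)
Your Steps 1 and 2 are the paper's Steps 1 and 2. The paper also pairs $X_1(T)$ with the representation (\ref{SMP5}) of $\varphi_x(\bar X(T))$ to get (\ref{duality2.}). It then applies Proposition \ref{duality principle} with $\alpha=\zeta$, and uses the $M$-solution representation and Fubini to reach (\ref{SMP4}), i.e.\ $E_{\tilde P}[\Psi(u)]=E_{\tilde P}\int_0^T\langle H_u,u(t)-\bar u(t)\rangle dt\ge 0$. Your remark that writing $\varphi_x(\bar X(T))$ rather than $E^{\mathcal F_t}_{\tilde P}[\varphi_x(\bar X(T))]$ in (\ref{eq:Y}) is harmless is correct.

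The two routes differ only in the localization. The paper argues by contradiction with the single perturbation $\hat v=u_0I_A+\bar uI_{A^c}$, where $A=\{\langle H_u,u_0-\bar u\rangle<0\}$, and simply asserts $\hat v\in\mathcal U[0,T]$. But $A$ is built from $Y,Z,\eta$, which exist only under $\tilde P$, so there is no reason for $I_A$ to lie in $M_G^2$. This is exactly the obstacle you flagged. Your device makes that step rigorous:
\begin{itemize}
\item replace the indicator by $[0,1]$-valued $Lip(\Omega_{t_0})$ functions;
\item keep admissibility by convexity of $U$;
\item pass to the limit under the single measure $\tilde P$, using $\langle H_u,v\rangle\in L^1(dt\otimes\tilde P)$ and the fact that the $\tilde P$ of Theorem \ref{ineq_Var} does not depend on the perturbation.
\end{itemize}

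The price of your version is the Lebesgue-differentiation step. There you must make sure the exceptional null set of $t_0$ does not depend on $E$. One way is to take Lebesgue points of $t\mapsto\langle H_u(t),v(t)\rangle$, viewed as an $L^1(\tilde P)$-valued Bochner integrable map. Another is to restrict to a countable generating family of sets from $\bigcup_{r\in\mathbb{Q}}\mathcal F_r$.

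You can also drop the time localization entirely. Approximate $I_A$ itself in $L^1(dt\otimes\tilde P)$ by $[0,1]$-valued processes in $M_G^0$, using simple predictable approximation plus density of $Lip(\Omega_{t_j})$ in $L^1(\tilde P)$. This turns the paper's one-line contradiction directly into a complete argument.
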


\begin{proof}
Our proof is based on the variational inequality given in Theorem \ref%
{ineq_Var}.

\textbf{Step 1.} We first compute $E_{\tilde{P}}\left[ \mathbb{\varphi }_{x}(%
\bar{X}(T))X_{1}(T)\right] .$ By (\ref{eq:mu}), we have 
\begin{equation}
\begin{array}{c}
\displaystyle\mathbb{\varphi }_{x}(\bar{X}(T))=\mu \left( T\right) =\mu
\left( 0\right) +\underset{i=1}{\overset{d}{\sum }}\int_{0}^{T}\eta
^{i}\left( s\right) dB_{s}^{i}+\underset{i=1}{\overset{d}{\sum }}%
\int_{0}^{T}dM^{i}\left( s\right) .%
\end{array}
\label{SMP5}
\end{equation}%
Since $N\in \mathcal{M}_{\tilde{P}}^{2,\perp }(0,T;\mathbb{R}^{n})$, it
follows from Proposition 2.17 of Section 3 in \cite{shu_K} that%
\begin{equation}
\begin{array}{l}
\displaystyle E_{\tilde{P}}\left[ \mathbb{\varphi }_{x}(\bar{X}(T))X_{1}(T)%
\right] \\ 
\displaystyle=E_{\tilde{P}}\biggl[\mathbb{\varphi }_{x}(\bar{X}(T))\underset{%
i,j=1}{\overset{d}{\sum }}\int_{0}^{T}\Bigl[\bar{A}_{x}^{ij}(T,s)X_{1}(s)+%
\bar{A}_{u}^{ij}(T,s)\bigl(u(s)-\bar{u}(s)\bigr)\Bigr]ds\biggr] \\ 
\displaystyle\text{ \ }+E_{\tilde{P}}\biggl[\biggl(\mu \left( 0\right) +%
\underset{i=1}{\overset{d}{\sum }}\int_{0}^{T}\eta ^{i}\left( s\right)
dB_{s}^{i}+\underset{i=1}{\overset{d}{\sum }}\int_{0}^{T}dM^{i}\left(
s\right) \biggr)\underset{i=1}{\overset{d}{\sum }}\int_{0}^{T}\Bigl[\bar{%
\sigma}_{x}^{i}(T,s)X_{1}(s)+\bar{\sigma}_{u}^{i}(T,s)\left( u(s)-\bar{u}%
(s)\right) \Bigr]dB_{s}^{i}\biggr] \\ 
\displaystyle=\underset{i,j=1}{\overset{d}{\sum }}E_{\tilde{P}}\biggl[%
\int_{0}^{T}\Bigl\langle\bar{A}_{x}^{ij}(T,s)^{T}\mathbb{\varphi }_{x}(\bar{X%
}(T))+\gamma ^{ij}\left( s\right) \bar{\sigma}_{x}^{i}(T,s)^{T}\eta
^{j}(s),X_{1}(s)\Bigr\rangle ds\biggr] \\ 
\displaystyle\text{ \ }+\underset{i,j=1}{\overset{d}{\sum }}E_{\tilde{P}}%
\biggl[\int_{0}^{T}\Bigl\langle\bar{A}_{u}^{ij}(T,s)^{T}\mathbb{\varphi }%
_{x}(\bar{X}(T))+\gamma ^{ij}\left( s\right) \bar{\sigma}_{u}^{i}(T,s)^{T}%
\eta ^{j}(s),\left( u(s)-\bar{u}(s)\right) \Bigr\rangle ds\biggr],%
\end{array}
\label{duality2.}
\end{equation}%
where $\bar{A}_{x}^{ij}(T,s)^{T},\bar{A}_{u}^{ij}(T,s)^{T},\bar{\sigma}%
_{x}^{i}(T,s),\bar{\sigma}_{u}^{i}(T,s)^{T}$ are defined in (\ref%
{cofficient_def})$.$ Consequently,%
\begin{equation}
\begin{array}{l}
\displaystyle E_{\tilde{P}}\left[ \Psi \left( u\right) \right] \\ 
\displaystyle=E_{\tilde{P}}\biggl[\mathbb{\varphi }_{x}(\bar{X}%
(T))X_{1}(T)+\int_{0}^{T}\Bigl[\bar{g}_{x}(s)X_{1}(s)+\bar{g}_{u}(s)\left(
u(s)-\bar{u}(s)\right) \Bigr]ds\biggr] \\ 
\displaystyle=E_{\tilde{P}}\biggl[\int_{0}^{T}\Bigl\langle\bar{g}_{x}(s)+%
\underset{i,j=1}{\overset{d}{\sum }}\Bigl[\bar{A}_{x}^{ij}(T,s)^{T}\mathbb{%
\varphi }_{x}(\bar{X}(T))+\gamma ^{ij}\left( s\right) \sigma
_{x}^{i}(T,s)^{T}\eta ^{j}(s)\Bigr],X_{1}(s)\Bigr\rangle ds\biggl] \\ 
\displaystyle\text{ \ \ }+E_{\tilde{P}}\biggl[\int_{0}^{T}\Bigl\langle\bar{g}%
_{u}(s)+\underset{i,j=1}{\overset{d}{\sum }}\Bigl[\bar{A}_{u}^{ij}(T,s)^{T}%
\mathbb{\varphi }_{x}(\bar{X}(T))+\gamma ^{ij}\left( s\right) \sigma
_{u}^{i}(T,s)^{T}\eta ^{j}(s)\Bigr],\left( u(s)-\bar{u}(s)\right) %
\Bigr\rangle ds\biggr].%
\end{array}
\label{SMP2}
\end{equation}

\textbf{Step 2. }In the following, we need to use the duality principles to
eliminate $X_{1}(\cdot ).$ Following a similar argument as in (\ref%
{duality2.}) and (\ref{duality3}) , we deduce from Proposition \ref{duality
principle} that%
\begin{equation}
\begin{array}{l}
\displaystyle E_{\tilde{P}}\biggl[\int_{0}^{T}\Bigl\langle\bar{g}_{x}(s)+%
\underset{i,j=1}{\overset{d}{\sum }}\Bigl[\bar{A}_{x}^{ij}(T,s)^{T}\mathbb{%
\varphi }_{x}(\bar{X}(T))+\gamma ^{ij}\left( s\right) \sigma
_{x}^{i}(T,s)^{T}\eta ^{j}(s)\Bigr],X_{1}(s)\Bigr\rangle ds\biggr] \\ 
\displaystyle=E_{\tilde{P}}\biggl[\int_{0}^{T}\left\langle \zeta \left(
t\right) ,Y(t)\right\rangle dt\biggr] \\ 
\displaystyle=E_{\tilde{P}}\biggl[\int_{0}^{T}\biggl\langle\underset{i=1}{%
\overset{d}{\sum }}\int_{0}^{t}\bar{\sigma}_{u}^{i}(t,s)\left( u(s)-\bar{u}%
(s)\right) dB_{s}^{i},E_{\tilde{P}}\left[ Y\left( t\right) \right] +\underset%
{i=1}{\overset{d}{\sum }}\int_{0}^{t}Z^{i}\left( t,s\right) dB_{s}^{i}+%
\underset{i=1}{\overset{d}{\sum }}\int_{0}^{t}dN^{i}\left( t,s\right) %
\biggr\rangle dt\biggr] \\ 
\displaystyle\text{ \ \ }+E_{\tilde{P}}\biggl[\int_{0}^{T}\biggl\langle%
\underset{i,j=1}{\overset{d}{\sum }}\int_{0}^{t}\bar{A}_{u}^{ij}(t,s)\left(
u(s)-\bar{u}(s)\right) ds,Y(t)\biggr\rangle dt\biggr] \\ 
\displaystyle=\underset{i,j=1}{\overset{d}{\sum }}E_{\tilde{P}}\biggl[%
\int_{0}^{T}\int_{0}^{t}\Bigl\langle\bar{\sigma}_{u}^{i}(t,s)^{T}Z^{j}\left(
t,s\right) ,\left( u(s)-\bar{u}(s)\right) \Bigr\rangle d\left\langle
B^{i},B^{j}\right\rangle _{s}dt\biggr] \\ 
\displaystyle\text{ \ \ }+\underset{i,j=1}{\overset{d}{\sum }}E_{\tilde{P}}%
\biggl[\int_{0}^{T}\int_{0}^{t}\Bigl\langle\bar{A}_{u}^{ij}(t,s)^{T}Y(t),%
\left( u(s)-\bar{u}(s)\right) \Bigr\rangle dsdt\biggr] \\ 
\displaystyle=\underset{i,j=1}{\overset{d}{\sum }}E_{\tilde{P}}\biggl[%
\int_{0}^{T}\int_{s}^{T}\Bigl\langle\gamma ^{ij}\left( s\right) \bar{\sigma}%
_{u}^{i}(t,s)^{T}Z^{j}\left( t,s\right) ,\left( u(s)-\bar{u}(s)\right) %
\Bigr\rangle dtds\biggr] \\ 
\displaystyle\text{ \ \ }+\underset{i,j=1}{\overset{d}{\sum }}E_{\tilde{P}}%
\biggl[\int_{0}^{T}\int_{s}^{T}\Bigl\langle\bar{A}_{u}^{ij}(t,s)^{T}Y(t),%
\left( u(s)-\bar{u}(s)\right) \Bigr\rangle dtds\biggr] \\ 
\displaystyle=\underset{i,j=1}{\overset{d}{\sum }}E_{\tilde{P}}\biggl[%
\int_{0}^{T}\int_{t}^{T}\Bigl\langle\Bigl[\bar{A}_{u}^{ij}(s,t)^{T}Y(s)+%
\gamma ^{ij}\left( t\right) \bar{\sigma}_{u}^{i}(s,t)^{T}Z^{j}\left(
s,t\right) \Bigr],\left( u(t)-\bar{u}(t)\right) \Bigr\rangle dsdt\biggr].%
\end{array}
\label{SMP3}
\end{equation}%
Thus, combining (\ref{SMP2}) with (\ref{SMP3}), we have for each $u(\cdot
)\in \mathcal{U}[0,T].$%
\begin{equation}
\begin{array}{r@{}l}
\displaystyle E_{\tilde{P}}\left[ \Psi \left( u\right) \right] {}=& E_{\tilde{P}}%
\biggl[\int_{0}^{T}\Bigl\langle\bar{g}_{u}(t)+\underset{i,j=1}{\overset{d}{%
\sum }}\left[ \bar{A}_{u}^{ij}(T,t)^{T}\mathbb{\varphi }_{x}(\bar{X}%
(T))+\gamma ^{ij}\left( t\right) \sigma _{u}^{i}(T,t)^{T}\eta ^{j}(t)\right]
,\left( u(t)-\bar{u}(t)\right) \Bigr\rangle dt\biggr] \\ 
{} & {}\displaystyle+E_{\tilde{P}}\biggl[%
\int_{0}^{T}\Bigl\langle\underset{i,j=1}{\overset{d}{\sum }}\int_{t}^{T}%
\Bigl[\bar{A}_{u}^{ij}(s,t)^{T}Y(s)+\gamma ^{ij}\left( t\right) \bar{\sigma}%
_{u}^{i}(s,t)^{T}Z^{j}\left( s,t\right) \Bigr]ds,\left( u(t)-\bar{u}%
(t)\right) \Bigr\rangle dt\biggr] \\ 
{}=&\displaystyle E_{\tilde{P}}\biggl[\int_{0}^{T}%
\Bigl\langle H_{u}(t,\bar{X}\left( t\right) ,Y\left( t\right) ,Z\left( \cdot
,t\right) ,\bar{u}\left( t\right) ),\left( u(t)-\bar{u}(t)\right) %
\Bigr\rangle dt\biggr]\geq 0.%
\end{array}
\label{SMP4}
\end{equation}

\textbf{Step 3. }We aim to verify that for any each $u(\cdot )\in \mathcal{U}%
[0,T],$ 
\begin{equation*}
\begin{array}{c}
\Bigl\langle H_{u}\bigl(t,\bar{X}\left( t\right) ,Y\left( t\right) ,Z\left(
\cdot ,t\right) ,\bar{u}\left( t\right) \bigr),\left( u(t)-\bar{u}(t)\right) %
\Bigr\rangle\geq 0,\text{ }a.e.\text{ }t\in \left[ 0,T\right] \text{, }%
\tilde{P}\text{-}a.s.%
\end{array}%
\end{equation*}%
The proof proceeds via contradiction. Set 
\begin{equation*}
\begin{array}{c}
A=\Bigl\{\left( t,w\right) \Bigl \vert\Bigl\langle H_{u}(t,\bar{X}\left(
t\right) ,Y\left( t\right) ,Z\left( \cdot ,t\right) ,\bar{u}\left( t\right)
),\left( u_{0}(t)-\bar{u}(t)\right) \Bigr\rangle<0\Bigr\}%
\end{array}%
\end{equation*}%
and $\hat{v}\left( \cdot \right) =u_{0}\left( \cdot \right) I_{A}+\bar{u}%
\left( \cdot \right) I_{A^{c}}\in \mathcal{U}[0,T].$ Assume that there
exists $u_{0}(\cdot )\in \mathcal{U}[0,T]$ such that%
\begin{equation*}
\begin{array}{c}
E_{\tilde{P}}\left[ \int_{0}^{T}I_{A}\left( t,w\right) dt\right] >0.%
\end{array}%
\end{equation*}%
It follows that 
\begin{equation*}
\begin{array}{l}
\displaystyle E_{\tilde{P}}\biggl[\int_{0}^{T}\Bigl\langle H_{u}(t,\bar{X}%
\left( t\right) ,Y\left( t\right) ,Z\left( \cdot ,t\right) ,\bar{u}\left(
t\right) ),\left( \hat{v}(t)-\bar{u}(t)\right) \Bigr\rangle dt\biggr] \\ 
\displaystyle=E_{\tilde{P}}\biggl[\int_{0}^{T}\Bigl\langle H_{u}(t,\bar{X}%
\left( t\right) ,Y\left( t\right) ,Z\left( \cdot ,t\right) ,\bar{u}\left(
t\right) ),\left( u_{0}(t)-\bar{u}(t)\right) I_{A}\Bigr\rangle dt\biggr]<0.%
\end{array}%
\end{equation*}%
This yields an obvious contradiction to (\ref{SMP4}), which completes the
proof.
\end{proof}

\section{Sufficient condition for optimality}

In this section, we give the sufficient condition for optimality.

\begin{theorem}
Let assumptions (H1)-(H6) hold. Suppose that $\bar{u}(\cdot )\in \mathcal{U}%
[0,T]$ and $\tilde{P}\in \mathcal{\tilde{P}}\left( \bar{X},\bar{u}\right) $
satisfy%
\begin{equation*}
\begin{array}{c}
\Bigl\langle H_{u}(t,\bar{X}\left( t\right) ,Y\left( t\right) ,Z\left( \cdot
,t\right) ,\bar{u}\left( t\right) ),\left( u(t)-\bar{u}(t)\right) %
\Bigr\rangle\geq 0,\text{ }\forall u\in U,\text{ }a.e.\text{ }t\in \left[ 0,T%
\right] ,\tilde{P}\text{-}a.s.,%
\end{array}%
\end{equation*}%
where $\bar{X}\left( \cdot \right) $ is the state process of $G$-SVIE (\ref%
{SVIE}) corresponding to $\bar{u}(\cdot )$ and $\left( Y,Z,N\right) $ is the
solution of the adjoint equation (\ref{eq:Y}) under $\tilde{P}.$ Moreover,
we assume that the Hamiltonian function $H$ (\ref{Hamiltonian function.}) is
convex in $x$,$u$ and $\varphi $ is convex in $x$,$u.$ Then $\left( \bar{X}%
(\cdot ),\bar{u}(\cdot )\right) $ is an optimal pair.
\end{theorem}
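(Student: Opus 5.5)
The strategy is to compare $J(u)$ and $J(\bar u)$ through the single measure $\tilde P$. Since $\tilde P\in\mathcal{\tilde P}(\bar X,\bar u)$, we have $J(\bar u)=E_{\tilde P}[\Gamma(\bar X,\bar u)]$. By Theorem \ref{G_Def}, $J(u)=\hat{\mathbb E}[\Gamma(X,u)]\ge E_{\tilde P}[\Gamma(X,u)]$ for every admissible $u(\cdot)$, where $X$ is the corresponding state. Hence it suffices to show
\[
E_{\tilde P}\bigl[\Gamma(X,u)-\Gamma(\bar X,\bar u)\bigr]\ge 0
\quad\text{for every } u(\cdot)\in\mathcal U[0,T].
\]
This reduces the sublinear problem to a linear one under the fixed measure $\tilde P$.

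\textbf{Terminal term.} Convexity of $\varphi$ gives $\varphi(X(T))-\varphi(\bar X(T))\ge \varphi_x(\bar X(T))(X(T)-\bar X(T))$. Set $\hat X=X-\bar X$. Under $\tilde P$, Remark \ref{<B>} allows us to replace $d\langle B^i,B^j\rangle_s$ by $\gamma^{ij}(s)ds$, so that
\[
\hat X(t)=\sum_{i,j}\int_0^t\bigl[A^{ij}(t,s,X(s),u(s))-A^{ij}(t,s,\bar X(s),\bar u(s))\bigr]ds+\sum_i\int_0^t\bigl[\sigma^i(t,s,X,u)-\sigma^i(t,s,\bar X,\bar u)\bigr]dB^i_s .
\]
Write $\varphi_x(\bar X(T))=\mu(T)$ using the BSDE (\ref{eq:mu}). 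Repeating the computation of (\ref{duality2.}) with Proposition 2.17 in \cite{shu_K} and the orthogonality of $M$, we rewrite $E_{\tilde P}[\varphi_x(\bar X(T))\hat X(T)]$ as
\[
E_{\tilde P}\int_0^T \sum_{i,j}\Bigl[\langle \varphi_x(\bar X(T)),\Delta A^{ij}(T,t)\rangle+\gamma^{ij}(t)\langle\eta^j(t),\Delta\sigma^i(T,t)\rangle\Bigr]dt ,
\]
where $\Delta$ denotes the difference of the coefficients evaluated at $(X,u)$ and at $(\bar X,\bar u)$.

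\textbf{Running terms.} Next we introduce $\Xi(t):=H(t,X(t),Y(t),Z(\cdot,t),u(t))-H(t,\bar X(t),\ldots,\bar u(t))$. Taking $\tilde P$-expectations and using the tower property to remove the conditional expectations $E^{\mathcal F_t}_{\tilde P}$ in (\ref{Hamiltonian function.}), $E_{\tilde P}\int_0^T\Xi(t)dt$ equals $E_{\tilde P}\int_0^T[g(t,X,u)-\bar g(t)]dt$ plus the terminal expression displayed above, plus
\[
E_{\tilde P}\int_0^T\!\!\int_t^T \sum_{i,j}\bigl[\langle Y(s),\Delta A^{ij}(s,t)\rangle+\gamma^{ij}(t)\langle Z^j(s,t),\Delta\sigma^i(s,t)\rangle\bigr]ds\,dt .
\]
The difficulty is that the last term involves nonlinear increments. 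To handle it, we read the first equation as a linear SVIE for $\hat X$ with free term $\alpha(t)$ equal to the whole right-hand side. We then pair $\alpha$ with $Y$ using condition (c) of Definition \ref{M_jie} and Fubini, exactly as in the proof of Proposition \ref{duality principle} and Step 2 of Theorem \ref{SMP}. This should identify the last term with
\[
E_{\tilde P}\int_0^T\Bigl\langle \hat X(t),\,Y(t)-\bar g_x(t)-\sum_{i,j}\bigl[\bar A^{ij}_x(T,t)^T\varphi_x(\bar X(T))+\gamma^{ij}\bar\sigma^i_x(T,t)^T\eta^j(t)\bigr]\Bigr\rangle dt
\]
plus linearized corrections, obtained from the BSVIE (\ref{eq:Y}). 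Collecting terms, the expected outcome is
\[
E_{\tilde P}[\Gamma(X,u)-\Gamma(\bar X,\bar u)]\ge E_{\tilde P}\int_0^T\Bigl(\Xi(t)-\langle H_x(t,\bar\cdot),\hat X(t)\rangle\Bigr)dt .
\]
Convexity of $H$ in $(x,u)$ then gives $\Xi(t)\ge\langle H_x,\hat X(t)\rangle+\langle H_u,u(t)-\bar u(t)\rangle$. The hypothesis $\langle H_u,u-\bar u\rangle\ge0$ holds a.e., $\tilde P$-a.s. (applied pointwise with $u=u(t,\omega)\in U$), so the right-hand side is nonnegative, which concludes the proof.

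\textbf{Main obstacle.} The hardest step is the bookkeeping in the running-terms step. The BSVIE (\ref{eq:Y}) is linear in the derivatives at the optimal point, whereas $\hat X$ solves an equation with nonlinear increments. The inequality must therefore come from matching $\langle H_x,\hat X\rangle$ exactly against the duality pairing, with all remaining terms absorbed by the convexity of $H$. I would first check the identity on the linear part, namely that $\sum_{i,j}\int_t^T[\bar A^{ij}_x(s,t)^TY(s)+\gamma^{ij}\bar\sigma^i_x(s,t)^TZ^j(s,t)]ds$ coincides, up to $E^{\mathcal F_t}_{\tilde P}$, with the $x$-gradient part of $H_x$, so that $Y(t)=E^{\mathcal F_t}_{\tilde P}[\ldots]+H_x$-terms. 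I would then verify that the stochastic-integral parts vanish in expectation by the isometry and orthogonality of $N$ and $M$, invoking the integrability provided by Proposition \ref{3.2}.
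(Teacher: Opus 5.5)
There is a genuine gap, at the step you yourself flag as the main obstacle. Your reduction to $E_{\tilde P}[\Gamma(X,u)-\Gamma(\bar X,\bar u)]\ge 0$, the terminal identity, the expansion of $E_{\tilde P}\int_0^T\Xi(t)\,dt$, and your target inequality are all correct; they match the paper's (\ref{SF6}), (\ref{SF2}), (\ref{SF1}) and its final chain. What is missing is the evaluation of the last, running term, and the mechanism you sketch for it is wrong.

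Pairing $\hat X$ against $Y-\bar g_x-\sum[\cdots]$ means pairing it against the driver of (\ref{eq:Y}). That replaces $\Delta A^{ij}(s,t)$ and $\Delta\sigma^i(s,t)$ by the linearized $\bar A^{ij}_x(s,t)\hat X(t)$ and $\bar\sigma^i_x(s,t)\hat X(t)$. The leftover ``corrections'' are then not convexity remainders: they are first order and contain the full $u$-increment, so convexity of $H$ cannot absorb them. In fact they equal exactly $E_{\tilde P}\int_0^T\langle\hat X(t),\xi(t)\rangle dt$, where $\xi$ is the free term of (\ref{eq:Y}). You can only see this through the identity below. As written, the final inequality is asserted rather than derived.

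No linearization is needed. Under $\tilde P$, the representation $\hat X(t)=\sum_{i,j}\int_0^t\Delta A^{ij}(t,s)ds+\sum_i\int_0^t\Delta\sigma^i(t,s)dB^i_s$ holds exactly, with the nonlinear increments. Condition (c) of Definition \ref{M_jie} gives $Y(t)=E_{\tilde P}[Y(t)]+\sum_j\int_0^tZ^j(t,s)dB^j_s+\sum_j\int_0^tdN^j(t,s)$. Pair these two representations using the isometry with $d\langle B^i,B^j\rangle_s=\gamma^{ij}(s)ds$, the orthogonality of $N$ to $B$, and the zero mean of the stochastic integral against the constant. Exchanging $s$ and $t$ by Fubini then gives directly
\[
E_{\tilde P}\int_0^T\!\!\int_t^T\sum_{i,j}\bigl[\langle Y(s),\Delta A^{ij}(s,t)\rangle+\gamma^{ij}(t)\langle Z^j(s,t),\Delta\sigma^i(s,t)\rangle\bigr]ds\,dt=E_{\tilde P}\int_0^T\langle Y(t),\hat X(t)\rangle\,dt ,
\]
which is the paper's (\ref{SF3}). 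The tool you name, pairing the whole right-hand side with $Y$ through (c) and Fubini, is the right one. Carried out, it yields this exact identity with no $\bar g_x$, no $\varphi_x$-terms and no corrections, and the driver of (\ref{eq:Y}) is not used at all.

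The BSVIE enters only through $\bar H_x(t)=Y(t)$. To get this, take $E^{\mathcal F_t}_{\tilde P}$ in (\ref{eq:Y}), compare with the $x$-derivative of (\ref{Hamiltonian function.}) at $(\bar X(t),\bar u(t))$, and use that $Y$ is adapted. Together these give the identity
\[
E_{\tilde P}\int_0^T\Xi\,dt=E_{\tilde P}\int_0^T(g(t,X,u)-\bar g(t))dt+E_{\tilde P}[\varphi_x(\bar X(T))\hat X(T)]+E_{\tilde P}\int_0^T\langle\bar H_x(t),\hat X(t)\rangle dt .
\]
From there, convexity of $\varphi$, a single use of convexity of $H$, and the hypothesis on $H_u$ finish the proof as you planned.
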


\begin{proof}
For each $u(\cdot )\in \mathcal{U}[0,T],$ let $X^{u}\left( \cdot \right) $
is the state process of $G$-SVIE (\ref{SVIE}) corresponding to $u(\cdot ).$
To simplify, we denote 
\begin{equation*}
H^{u}\left( t\right) =H(t,X^{u}\left( t\right) ,Y\left( t\right) ,Z\left(
\cdot ,t\right) ,u\left( t\right) )\text{ and }\bar{H}\left( t\right) =H(t,%
\bar{X}\left( t\right) ,Y\left( t\right) ,Z\left( \cdot ,t\right) ,\bar{u}%
\left( t\right) ).
\end{equation*}%
The partial derivatives $H_{x}^{u}\left( t\right) ,H_{u}^{u}\left( t\right) ,%
\bar{H}_{x}\left( t\right) ,\bar{H}_{u}\left( t\right) $ are defined
similarly. By (\ref{Hamiltonian function.})$,$ we have%
\begin{equation}
\begin{array}{l}
\displaystyle E_{\tilde{P}}\biggl[\int_{0}^{T}H^{u}\left( t\right) -\bar{H}%
\left( t\right) dt\biggr]=E_{\tilde{P}}\biggl[\int_{0}^{T}g\left(
t,X^{u}\left( t\right) ,u\left( t\right) \right) -g\bigl(t,\bar{X}\left(
t\right) ,\bar{u}\left( t\right) \bigr)dt\biggr] \\ 
\displaystyle\text{ \ \ \ \ \ \ \ \ \ \ \ \ \ \ \ \ \ \ \ \ \ \ \ \ \ \ \ \
\ \ \ \ \ \ \ \ \ }+\underset{i,j=1}{\overset{d}{\sum }}E_{\tilde{P}}\biggl[%
\int_{0}^{T}\Bigl[\delta A^{ij}(T,t)^{T}\mathbb{\varphi }_{x}(\bar{X}%
(T))+\gamma ^{ij}\left( t\right) \delta \sigma ^{i}(T,t)^{T}\eta ^{j}(t)%
\Bigr]dt\biggr] \\ 
\displaystyle\text{ \ \ \ \ \ \ \ \ \ \ \ \ \ \ \ \ \ \ \ \ \ \ \ \ \ \ \ \
\ \ \ \ \ \ \ \ \ }+\underset{i,j=1}{\overset{d}{\sum }}E_{\tilde{P}}\biggl[%
\int_{0}^{T}\int_{t}^{T}\Bigl[\delta A^{ij}(s,t)^{T}Y(s)+\gamma ^{ij}\left(
t\right) \delta \sigma ^{i}(s,t)^{T}Z^{j}\left( s,t\right) \Bigr]dsdt\biggr],%
\end{array}
\label{SF1}
\end{equation}%
where $A^{ij}(t,s,x,u)$ is defined in (\ref{Axu_Def}) and 
\begin{equation*}
\delta A^{ij}(t,s)^{T}=A^{ij}(t,s,X^{u}\left( t\right) ,u\left( t\right)
)^{T}-A^{ij}(t,s,\bar{X}\left( t\right) ,\bar{u}\left( t\right) )^{T},
\end{equation*}%
\begin{equation*}
\delta \sigma ^{i}(t,s)^{T}=\sigma ^{i}(t,s,X^{u}\left( t\right) ,u\left(
t\right) )^{T}-\sigma ^{i}(t,s,\bar{X}\left( t\right) ,\bar{u}\left(
t\right) )^{T},\text{ }t,s\in \left[ 0,T\right] ^{2}.
\end{equation*}%
We now compute the last two terms in (\ref{SF1}). Note that%
\begin{equation*}
X^{u}\left( t\right) -X\left( t\right) =\underset{i,j=1}{\overset{d}{\sum }}%
\int_{0}^{T}\delta A^{ij}(t,s)ds+\underset{i=1}{\overset{d}{\sum }}%
\int_{0}^{T}\delta \sigma ^{i}(t,s)dB_{s}^{i},\text{ }t\in \left[ 0,T\right]
.
\end{equation*}%
Since $\left( \mu ,\eta ,M\right) \in \mathcal{S}_{\tilde{P}}^{2}\left(
0,T\right) $, we obtain from (\ref{SMP5}) that%
\begin{equation}
\begin{array}{l}
\displaystyle\underset{i,j=1}{\overset{d}{\sum }}E_{\tilde{P}}\biggl[%
\int_{0}^{T}\delta A^{ij}(T,t)^{T}\mathbb{\varphi }_{x}(\bar{X}(T))+\gamma
^{ij}\left( t\right) \delta \sigma ^{i}(T,t)^{T}\eta ^{j}(t)dt\biggr] \\ 
\displaystyle=\underset{i,j=1}{\overset{d}{\sum }}E_{\tilde{P}}\biggl[%
\mathbb{\varphi }_{x}(\bar{X}(T))\int_{0}^{T}\delta A^{ij}(T,t)dt\biggr]+%
\underset{i,j=1}{\overset{d}{\sum }}E_{\tilde{P}}\biggl[\int_{0}^{T}\eta
^{j}(t)dB_{t}^{j}\int_{0}^{T}\delta \sigma ^{i}(T,t)dB_{t}^{i}\biggr] \\ 
\displaystyle=\underset{i,j=1}{\overset{d}{\sum }}E_{\tilde{P}}\biggl[%
\mathbb{\varphi }_{x}(\bar{X}(T))\int_{0}^{T}\delta A^{ij}(T,t)dt\biggr] \\ 
\displaystyle\text{ \ \ }+E_{\tilde{P}}\biggl[\biggl(\mu \left( 0\right) +%
\underset{j=1}{\overset{d}{\sum }}\int_{0}^{T}\eta ^{j}(t)dB_{t}^{j}+%
\underset{j=1}{\overset{d}{\sum }}\int_{0}^{T}dM_{s}^{j}\biggr)\underset{i=1}%
{\overset{d}{\sum }}\int_{0}^{T}\delta \sigma ^{i}(T,t)dB_{t}^{i}\biggr] \\ 
\displaystyle=E_{\tilde{P}}\biggl[\mathbb{\varphi }_{x}(\bar{X}(T))\biggl(%
\underset{i,j=1}{\overset{d}{\sum }}\int_{0}^{T}\delta A^{ij}(T,t)dt+%
\underset{i=1}{\overset{d}{\sum }}\int_{0}^{T}\delta \sigma
^{i}(T,t)dB_{t}^{i}\biggr)\biggr] \\ 
\displaystyle=E_{\tilde{P}}\left[ \mathbb{\varphi }_{x}(\bar{X}(T))\left(
X^{u}\left( T\right) -\bar{X}\left( T\right) \right) \right] .%
\end{array}
\label{SF2}
\end{equation}%
By analogous arguments, it holds that%
\begin{equation}
\begin{array}{l}
\displaystyle\underset{i,j=1}{\overset{d}{\sum }}E_{\tilde{P}}\biggl[%
\int_{0}^{T}\int_{t}^{T}\Bigl[\delta A^{ij}(s,t)^{T}Y(s)+\gamma ^{ij}\left(
t\right) \delta \sigma ^{i}(s,t)^{T}Z^{j}\left( s,t\right) \Bigr]dsdt\biggr]
\\ 
\displaystyle=\underset{i,j=1}{\overset{d}{\sum }}E_{\tilde{P}}\biggl[%
\int_{0}^{T}\int_{s}^{T}\Bigl[\delta A^{ij}(t,s)^{T}Y(t)+\gamma ^{ij}\left(
s\right) \delta \sigma ^{i}(t,s)^{T}Z^{j}\left( t,s\right) \Bigr]dtds\biggr]
\\ 
\displaystyle=\underset{i,j=1}{\overset{d}{\sum }}E_{\tilde{P}}\biggl[%
\int_{0}^{T}\int_{0}^{t}\delta A^{ij}(t,s)^{T}Y(t)dsdt\biggr]+\underset{i,j=1%
}{\overset{d}{\sum }}E_{\tilde{P}}\biggl[\int_{0}^{T}\int_{0}^{t}\delta
\sigma ^{i}(t,s)^{T}Z^{j}\left( t,s\right) d\left\langle
B^{i},B^{j}\right\rangle _{s}dt\biggr] \\ 
\displaystyle=\underset{i,j=1}{\overset{d}{\sum }}E_{\tilde{P}}\biggl[%
\int_{0}^{T}\biggl\langle Y(t),\int_{0}^{t}\delta A^{ij}(t,s)ds\biggr\rangle %
dt\biggr]+\underset{i,j=1}{\overset{d}{\sum }}E_{\tilde{P}}\biggl[%
\int_{0}^{T}\biggl\langle\int_{0}^{t}Z^{j}\left( t,s\right)
dB_{s}^{j},\int_{0}^{t}\delta \sigma ^{i}(t,s)dB_{s}^{i}\biggr\rangle dt%
\biggr] \\ 
\displaystyle=E_{\tilde{P}}\biggl[\int_{0}^{T}\biggl\langle Y(t),\underset{%
i,j=1}{\overset{d}{\sum }}\int_{0}^{t}\delta A^{ij}(t,s)ds\biggr\rangle dt%
\biggr] \\ 
\displaystyle\text{ \ \ }+E_{\tilde{P}}\biggl[\int_{0}^{T}\biggl\langle E_{%
\tilde{P}}\left[ Y(t)\right] +\underset{j=1}{\overset{d}{\sum }}%
\int_{0}^{t}Z^{j}\left( t,s\right) dB_{s}^{j}+\underset{j=1}{\overset{d}{%
\sum }}\int_{0}^{t}dN^{j}\left( t,s\right) ,\underset{i=1}{\overset{d}{\sum }%
}\int_{0}^{t}\delta \sigma ^{i}(t,s)dB_{s}^{i}\biggr\rangle dt\biggr] \\ 
\displaystyle=E_{\tilde{P}}\biggl[\int_{0}^{T}\biggl\langle Y(t),\underset{%
i,j=1}{\overset{d}{\sum }}\int_{0}^{t}\delta A^{ij}(t,s)ds+\underset{i=1}{%
\overset{d}{\sum }}\int_{0}^{t}\delta \sigma ^{i}(t,s)dB_{s}^{i}%
\biggr\rangle dt\biggr] \\ 
\displaystyle=E_{\tilde{P}}\biggl[\int_{0}^{T}\Bigl\langle Y(t),X^{u}\left(
t\right) -\bar{X}\left( t\right) \Bigr\rangle dt\biggr].%
\end{array}
\label{SF3}
\end{equation}%
Combining (\ref{SF1})-(\ref{SF3}), we have%
\begin{equation}
\begin{array}{l}
\displaystyle E_{\tilde{P}}\biggl[\int_{0}^{T}H^{u}(t)-\bar{H}(t)dt\biggr]%
=E_{\tilde{P}}\biggl[\int_{0}^{T}g\left( t,X^{u}\left( t\right) ,u\left(
t\right) \right) -g\bigl(t,\bar{X}\left( t\right) ,\bar{u}\left( t\right) %
\bigr)dt\biggr] \\ 
\displaystyle\text{ \ \ \ \ \ \ \ \ \ \ \ \ \ \ \ \ \ \ \ \ \ \ \ \ \ \ \ \
\ \ \ \ \ \ \ }+E_{\tilde{P}}\Bigl[\mathbb{\varphi }_{x}(\bar{X}(T))\bigl(%
X^{u}\left( T\right) -\bar{X}\left( T\right) \bigr)\Bigr]+E_{\tilde{P}}%
\biggl[\int_{0}^{T}\left\langle Y(t),X^{u}\left( t\right) -\bar{X}\left(
t\right) \right\rangle dt\biggr].%
\end{array}
\label{SF4}
\end{equation}%
Furthermore, note that 
\begin{equation}
\bar{H}_{x}(t)=E_{\tilde{P}}^{\mathcal{F}_{t}}\left[ Y\left( t\right) \right]
=Y\left( t\right) ,  \label{SF5}
\end{equation}%
Note that $\tilde{P}\in \mathcal{\tilde{P}}\left( \bar{X},\bar{u}\right) ,$
we have 
\begin{equation}
\begin{array}{l}
\displaystyle J\left( u(\cdot )\right) -J\left( \bar{u}(\cdot )\right) \\
\geq
E_{\tilde{P}}\biggl[\mathbb{\varphi }(X^{u}(T))+\int_{0}^{T}g(s,X(s),u(s))ds%
\biggr]\mathbb{-}E_{\tilde{P}}\biggl[\mathbb{\varphi }(\bar{X}%
(T))+\int_{0}^{T}g(s,\bar{X}(s),\bar{u}(s))ds\biggr] \\ 
=\displaystyle E_{\tilde{P}}\biggl[\mathbb{\varphi }(X^{u}(T))-\mathbb{%
\varphi }(\bar{X}(T))\biggr]\mathbb{-}E_{\tilde{P}}\biggl[%
\int_{0}^{T}g(s,X(s),u(s))ds-\int_{0}^{T}g(s,\bar{X}(s),\bar{u}(s))ds\biggr]%
\end{array}
\label{SF6}
\end{equation}%
Since $H$ is convex in $x$,$u$ and $\varphi $ is convex in $x$,$u,$ we
deduce from (\ref{SF4})-(\ref{SF6}) that for each $\bar{u}(\cdot )\in 
\mathcal{U}[0,T]$,%
\begin{equation}
\begin{array}{l}
\displaystyle J\left( u(\cdot )\right) -J\left( \bar{u}(\cdot )\right) \\ 
\displaystyle\geq E_{\tilde{P}}\Bigl[\mathbb{\varphi }_{x}(\bar{X}(T))\left(
X^{u}\left( T\right) -\bar{X}\left( T\right) \right) \Bigr]+E_{\tilde{P}}%
\biggl[\int_{0}^{T}g\left( t,X^{u}\left( t\right) ,u\left( t\right) \right)
-g\left( t,\bar{X}\left( t\right) ,\bar{u}\left( t\right) \right) dt\biggr]
\\ 
\displaystyle=E_{\tilde{P}}\biggl[\int_{0}^{T}H^{u}(t)-\bar{H}(t)dt\biggr]%
-E_{\tilde{P}}\biggl[\int_{0}^{T}\Bigl\langle Y(t),\left( X^{u}\left(
t\right) -\bar{X}\left( t\right) \right) \Bigr\rangle dt\biggr] \\ 
\displaystyle\geq E_{\tilde{P}}\biggl[\int_{0}^{T}\bar{H}_{x}(t)\left(
X^{u}\left( t\right) -\bar{X}\left( t\right) \right) dt\biggr]-E_{\tilde{P}}%
\biggl[\int_{0}^{T}\Bigl\langle Y(t),\left( X^{u}\left( t\right) -\bar{X}%
\left( t\right) \right) \Bigr\rangle dt\biggr] \\ 
\displaystyle\text{ \ \ }+E_{\tilde{P}}\Biggl[\int_{0}^{T}\bar{H}%
_{u}(t)\left( u\left( t\right) -\bar{u}\left( t\right) \right) dt\biggr] \\ 
\displaystyle=E_{\tilde{P}}\biggl[\int_{0}^{T}\bar{H}_{u}(t)\left( u\left(
t\right) -\bar{u}\left( t\right) \right) dt\biggr]\geq 0,%
\end{array}%
\end{equation}%
which implies that $\bar{u}(\cdot )$ is the optimal control. This completes
the proof.
\end{proof}

\section*{Declarations}

\noindent {\large \textbf{Competing interests}} The authors have no
competing interests to declare that are relevant to the content of this
article.

\noindent {\large \textbf{Funding}} The work was supported by the National
Natural Science Foundation of China (No. 12326603, 11671231).

\noindent {\large \textbf{Data availability statement}} There is no
associated data in this study.

\noindent {\large \textbf{Author Contributions}} All authors reviewed the
manuscript.


\begin{thebibliography}{99}
\bibitem{berger1980volterra}M. Berger, V. Mizel, Volterra equations with It\^{o} integrals, I, II, J. Integral Equations. 2 (1980) 187–245, 319–337.

\bibitem{Bagiani2014_Gcontrol}F. Bagiani, T. Meeyeer-Brandis, B. Oksendal, Optimal control with delayed information  flow of systems driven by $G$-Brownian motion, PUQR, 3(1) (2014).

\bibitem{Buckdan2025meanG} R. Buckdahn, B. He, J. Li, Mean field stochastic
control under sublinear expectation, SIAM J. Control Optim. 63 (2025)
1051-1084.

\bibitem{Denis2011function} L. Denis, M. Hu, S. Peng, Function spaces and
capacity related to a sublinear expectation: application to $G$-Brownian
motion pathes, Potential Anal. 34 (2011) 139-161.

\bibitem{Fan_wang2025}S. Fan, T. Wang, J. Yong, Multi-dimensional super-linear backward stochastic Volterra integral equations, J. Diﬀer. Equ. 437 (2025) 1–63.

\bibitem{gao2009pathwise}F. Gao, Pathwise properties and homomorphic flows for stochastic differential equations driven by $ G $-Brownian motion, Stochastic Process. Appl. 119 (10) (2009) 3356–3382. 

\bibitem{HuY2019linear} Y. Hu, B. $\emptyset $ksendal, Linear Volterra
backward stochastic integral equations, Stoch. Process. Appl. 129 (2019)
626-633.

\bibitem{Hu2014a} M. Hu, S. Ji, S. Peng, Y. Song, Backward stochastic
differential equations driven by $G$-Brownian motion, Stoch. Proc. Appl. 124
(2014) 759-784.

\bibitem{Hu2016maximum} M. Hu, S. Ji, Stochastic maximum principle for
stochastic recursive optimal control problem under volatility ambiguity,
SIAM J. of Control Optim. 54 (2) (2016) 918-945.

\bibitem{Hu2016Qc} M. Hu, F. Wang, G. Zheng, Quasi-continuous random
variables and processes under the $G$-expectation framework, Stoch. Proc.
Appl. 126 (2016) 2367-2387.

\bibitem{Qutime_vary}M. Hu, B. Qu, F.Wang, BSDEs driven by $G$-Brownian motion with time-varying Lipschitz condition, J. Math. Anal. Appl. 491 (2020) 1–27. 

\bibitem{Hamaguchi_delay}Y. Hamaguchi, On the maximum principle for optimal control problems of stochastic Volterra integral equations with delay, Appl. Math. Optim. 87 (42) (2023).

\bibitem{ito1979on}I. Ito, On the existence and uniqueness of solutions of stochastic integral equations of the Volterra type, Kodai Math. J. 2 (1979) 158-170.

\bibitem{Jiangl2023stable} L. Jiang, G. Liang, A robust $\alpha $-stable
central limit theorem under sublinear expectation without integrability
condition, J. Theoretic. Probab. 37 (2024) 2394-2424.

\bibitem{shu_K}I. Karatzas, S. Shreve, Brownian Motion and Stochastic Calculus, Springe-Verlag, New  York, 1991.

\bibitem{Karoui1997bsde_N}N. El Karoui, S. Huang, A general result of existence and uniqueness of backward stochastic differential equations, in Backward Stochastic Differential Equations, N. El Karoui, and L. Mazliak, eds., Pitman Res. Notes Math. Ser., 364 (1997) 27–36.

\bibitem{Lin2002adapted} J. Lin, Adapted solution of a backward stochastic
nonlinear Volterra integral equation, Stoch. Anal. Appl. 20 (2002) 165-183.

\bibitem{LiuG2020multi} G. Liu, Multi-dimensional BSDEs driven by $G$%
-Brownian motion and related system of fully nonlinear PDEs, Stochastics 92
(5) (2020) 659-683.

\bibitem{Linyq2019reflect} Y. Lin, A.S. Hima, Reflected stochastic
differential equations driven by $G$-Brownian motion in non-convex domains,
Stoch. Dyn. 19 (03) (2019) 1950025.

\bibitem{luo2014stochastic}P. Luo, F. Wang, Stochstic differential equations driven by $G$-Brownian motion and ordinary differential equations. Stochastic Process. Appl. 124 (11) (2014) 3869–3885.

\bibitem{pardoux1990stochastic}E. Pardoux, P. Protter, Stochastic Volterra equations with anticipating coefficients. Ann. Probab. 18 (1990) 1635–1655.

\bibitem{peng2019nonlinear} S. Peng, Nonlinear Expectations and Stochastic
Calculus under Uncertainty, Springer, Berlin (2019).

\bibitem{popier2021_N}A. Popier, Backward stochastic Volterra integral equations with jumps in a general filtration. ESAIM Probab. Stat. 25 (2021) 133–203.

\bibitem{Ren2010jump} Y. Ren, On solutions of backward stochastic Volterra
integral equations with jumps in Hilbert space, J. Optim. Theory Appl. 144
(2010) 319-333.

\bibitem{Soner2012Bsde2} H. M. Soner, N. Touzi, J. Zhang, Wellposedness of
Second Order Backward SDEs, J. Theoret. Probab. 153 (1-2) (2012) 149-190.

\bibitem{WangT2013mean} Y. Shi, T. Wang, J. Yong, Mean-field backward
stochastic Volterra integral equations, Discrete Contin. Dyn. Syst. Ser. B 18
(7) (2013) 1929-1967.

\bibitem{Wen_shi_double} Y. Shi, J. Wen, J. Xiong, Backward doubly stochastic Volterra integral equations and their applications, J. Diﬀer. Equ. 269 (2020) 6492–6528.

\bibitem{Sun_G_control}Z. Sun. Maximum principle for forward-backward stochastic control system under $G$-Brownian motion and relation to dynamic programming, J. of Comput. and Appl. Mathematics 296 (2016) 753-775.

\bibitem{WangH2021risk} H. Wang, J. Sun, J. Yong, Recursive utility
processes, dynamic risk measures and quadratic backward stochastic Volterra
integral equations, Appl. math. Optim. 84 (2021) 145-190.

\bibitem{WangT2017forward-backward} T. Wang, H. Zhang, Optimal control
problems of forward-backward stochastic Volterra integral equations with
closed control regions, SIAM J. Control Optim. 55 (2017) 2574-2602.

\bibitem{Wangzhang2007} Z. Wang, X. Zhang, Non-Lipschitz backward stochastic
Volterra type equations with jumps, Stoch. Dyn. 7 (2007) 479-496.

\bibitem{xu2014G_control}Y. Xu, Stochastic maximum principle for optimal control with multiple priors, Systems Control  Lett. 64 (2014) 114–118.

\bibitem{yong2008Wellposedness} J. Yong, Well-posedness and regularity of
backward stochastic Volterra integral equations, Probab. Theory Relat.
Fields 142 (2008) 21-77.

\bibitem{yong2006Wellposedness} J. Yong, Backward stochastic Volterra
integral equations and some related problems, Stoch. Process. Appl. 116
(2006) 779-795.

\bibitem{zhao} B. Zhao, R. Li, M. Hu, Stochastic Volterra Integral Equations
Driven by $G$-Brownian Motion, Math. Methods Appl. Sci. (2025) 1-19.
\end{thebibliography}
\end{document}